\documentclass[12pt]{amsart}
\usepackage{geometry}
\usepackage{amsmath}
\usepackage{amsthm}
\usepackage{bbm}
\usepackage{amsfonts,amssymb,bm}
\usepackage{fancyhdr}
\usepackage{mathrsfs}
\usepackage{appendix}
\usepackage{graphicx}
\usepackage{floatrow}
\usepackage{mathtools}
\usepackage{amsfonts,amssymb}  
\usepackage[all]{xy}
\usepackage{color}


\usepackage{float} 
\usepackage{amsthm}

\newtheorem{thm}{Theorem}[section]
\newtheorem{defi}[thm]{Definition}

\newtheorem{cor}[thm]{Corollary}
\newtheorem{prop}[thm]{Proposition}

\newtheorem{ex}[thm]{Example}
\newtheorem{rmk}[thm]{Remark}
\newtheorem{ques}[thm]{Question}


\usepackage{times}
\usepackage{hyperref}
\linespread{1.2}
\setlength{\parindent}{0.5 cm}
\setlength{\textwidth}{16.9 cm}
\setlength{\topmargin} {-1 cm}
\setlength{\evensidemargin}{0 cm}
\setlength{\oddsidemargin}{0 cm}
\setlength{\footskip}{7 mm}
\setlength{\headheight}{2 mm}
\setlength{\textheight}{23.6 cm}
\setlength{\parskip}{2 mm}


\def\Hom{{\rm Hom}}
\def\ord{{\rm ord}}
\def\min{{\rm min}}
\def\max{{\rm max}}
\def\inf{{\rm inf}}
\def\sup{{\rm sup}}
\def\lim{{\rm lim}}
\def\limsup{{\rm lim\,sup}}

\def\dif{{\rm d}}

\def\gr{{\rm gr}}
\def\pr{\rm pr}
\def\wt{{\rm wt}}
\def\Rees{{\rm Rees}}
\def\Proj{{\rm Proj}}
\def\Spec{{\rm Spec}}
\def\Diff{{\rm Diff}}

\def\log{{\rm log}}
\def\vol{{\rm vol}}
\def\Val{{\rm Val}}

\def\lct{{\rm lct}}
\def\DH{{\rm DH}}

\def\SL{{\rm SL}}

\def\Bl{{\rm Bl}}

\def\mult{{\rm mult}}

\def\TZ{{\rm TZ}}

\def\BP{\mathbf{P}}

\def\BO{\mathbf{O}}

\def\BH{\mathbf{H}}
\def\BL{\mathbf{L}}

\def\Bv{\mathbf{v}}

\def\tH{\tilde{H}}
\def\tL{\tilde{L}}
\def\tC{\tilde{C}}
\def\tY{\tilde{Y}}
\def\tS{\widetilde{S}}
\def\nv{\mathfrak{v}}
\newcommand{\IA}{{\mathbb A}}

\newcommand{\IC}{{\mathbb C}}

\newcommand{\IG}{{\mathbb G}}

\newcommand{\IN}{{\mathbb N}}
 
\newcommand{\IP}{{\mathbb P}} 
\newcommand{\IQ}{{\mathbb Q}} 
\newcommand{\IR}{{\mathbb R}}

\newcommand{\IT}{{\mathbb T}}

\newcommand{\IZ}{{\mathbb Z}}

\newcommand{\CF}{{\mathcal F}}

\newcommand{\CL}{{\mathcal L}}

\newcommand{\CO}{{\mathcal O}}

\newcommand{\CR}{{\mathcal R}}

\newcommand{\CX}{{\mathcal X}}

\newcommand{\fv}{\mathfrak{v}}

\newcommand{\seq}{\subseteq}
\newcommand{\la}{\langle}
\newcommand{\ra}{\rangle}

\newcommand{\bu}{\bullet}
\newcommand{\lam}{\lambda}
\newcommand{\D}{\Delta}

\floatsetup[table]{capposition=top}
\newfloatcommand{capbtabbox}{table}[][\FBwidth]


\title{Optimal Degenerations of K-unstable Fano threefolds}

\author{Minghao Miao}
\address{Department of Mathematics, Nanjing University, Nanjing 210093, China}
\curraddr{}
\email{minghao.miao@smail.nju.edu.cn}
\thanks{}
\keywords{}
\date{}
\dedicatory{}

\author{Linsheng Wang}
\address{Department of Mathematics, Nanjing University, Nanjing 210008, China}
\curraddr{}
\email{linsheng\_wang@outlook.com}
\thanks{}
\keywords{}
\date{}
\dedicatory{}

\begin{document}

\maketitle

\begin{abstract}
We explicitly determine the optimal degenerations of Fano threefolds $X$ in family №2.23 of Mori-Mukai's list as predicted by the Hamilton-Tian conjecture. More precisely, we find a special degeneration $(\mathcal{X}, \xi_0)$ of $X$ such that $(\mathcal{X}_0, \xi_0)$ is weighted K-polystable, which is equivalent to $(\mathcal{X}_0, \xi_0)$ admitting a K\"ahler-Ricci soliton (KRS) by \cite{HL23} and \cite{BLXZ23}. 
Furthermore, we study the moduli spaces of $(\mathcal{X}_0, \xi_0)$. The $\mathbf{H}$-invariant of $X$ divides the natural parameter space into two strata, which leads to different moduli spaces of KRS Fano varieties. We show that one of them is isomorphic to the GIT-moduli space of biconic curves $C\seq \IP^1\times \IP^1$, and the other one is a single point. 
\end{abstract}

\section{Introduction} 
In the original approach to finding K\"ahler-Einstein metrics on Fano manifolds, Tian predicted in \cite[Conjecture 9.1]{Tia97} that a sequence $\omega_t$ of solutions of the normalized K\"ahler-Ricci flow on a Fano manifold $M$ (up to taking subsequence) should converge to $(M_\infty, \omega_\infty)$ in the Cheeger-Gromov-Hausdorff topology, such that $M_\infty$ is smooth outside a subset of Hausdorff codimension $4$ and $\omega_\infty$ is a K\"ahler-Einstein metric or a K\"ahler-Ricci soliton on the smooth part of $M_\infty$. 
This conjecture has been widely studied, and is now solved, see \cite{TZ16, Bam18, CW20, WZ21}. There is an algebraic version of this conjecture which was also solved, see \cite{HL23, BLXZ23}. The limit $M_\infty$ is called the {\it optimal degeneration} of the Fano manifold $M$. 

Although the theoretical aspect of optimal degeneration is well-established, it was almost blank on determining the optimal degeneration explicitly. We have the following question asked by Tian. \vspace{0.3em}

\noindent
{\bf Tian Problem. } How to find the optimal degeneration $M_\infty$ of a K-unstable Fano manifold $M$? \vspace{0.3em}

\noindent
The $\BH$-invariant introduced by \cite{TZZZ13} plays a key role in this problem, which admits a unique minimizer inducing the optimal degeneration of $M$ by \cite{HL20,BLXZ23}. However, the $\BH$-minimizer is difficult to compute explicitly. To the authors' knowledge, the only non-trivial examples come from group compactifications and spherical varieties, see \cite{LL23,TZ22,WZ23}. We will present examples with discrete automorphism groups in this paper.

Another invariant used to characterize K-unstable Fano manifolds $M$ is the so-called stability threshold $\delta(M)$ (or $\delta$-invariant). By \cite{LXZ22}, there exists a prime divisor $E$ over $M$ minimizing $\delta(M)$ and inducing a special degeneration of $M$ to a $\IQ$-Fano variety $M_0$. This should also be considered as the ``most'' degenerating direction of $M$. Hence one may compare the two degenerations induced by the $\delta$-minimizer and $\BH$-minimizer naturally. On the other hand, there is an effective method developed by \cite{AZ22} to compute the $\delta$-minimizer. Therefore, the optimal degeneration problem will be understood if the following question has a positive answer. 

\begin{ques} \label{ques: delta=H}
For a K-unstable Fano manifold $M$, when do the $\delta$-invariant and $\BH$-invariant have the same minimizer (up to rescaling)?
\end{ques}

In this paper, we solve the Tian problem for Fano threefolds in the family №2.23 of Mori-Mukai's list, which splits into two subfamilies, №2.23(a) and №2.23(b). More precisely, we show that the $\delta$-minimizer and the $\BH$-minimizer are the same (up to rescaling) for any Fano threefold $X$ in №2.23(a), but different for those $X$ in №2.23(b). In particular, for any $X$ in №2.23(a), we determine the optimal degeneration $(X_0,\xi_0)$. On the other hand, even more non-trivial is that, we find the optimal degenerations of any Fano threefolds in №2.23(b). 

It was proved by the first author and Tian (\cite{MT22}) that the Fano threefolds in family №2.23 are the examples of lowest dimension on which the normalized K\"ahler-Ricci flow develops the type II singularity. 
Our result gives an affirmative answer to the question posed in \cite[Remark 3.1]{MT22} regarding the explicit description of their optimal degeneration.

Recall that any Fano threefold $X$ of family №2.23 is obtained by blowing up the quadric threefold $Q\seq \IP^4$ along a smooth elliptic curve $C$, which is the complete intersection of a hyperplane section $H\in |\CO_Q(1)|$ and a quadric section $Q'\in |\CO_Q(2)|$. The family consists of two subfamilies: №2.23(a) and №2.23(b) determined by $H\cong \IP^1\times \IP^1$ and $H\cong \IP(1,1,2)$ respectively. We denote by $\tH$ the strict transform of $H$ on $X$. 

\begin{thm}
For any $X$ in №2.23, we have $\delta(X)=\frac{12}{13}$, which is minimized by $\ord_{\tH}$. 
\end{thm}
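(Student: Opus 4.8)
The plan is to prove the two inequalities $\delta(X)\le \tfrac{12}{13}$ and $\delta(X)\ge \tfrac{12}{13}$ separately, reading off both the value and the minimizer from the strict transform $\tH$.

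For the upper bound I would compute $A_X(\ord_{\tH})/S_X(\ord_{\tH})$ directly. Since $\tH$ is a prime divisor on the smooth threefold $X$, we have $A_X(\ord_{\tH})=1$. Writing $L=\pi^*\CO_Q(1)$ and $E$ for the $\pi$-exceptional divisor, one has $-K_X=3L-E$ and $\tH=\pi^*H-E=L-E$; together with the intersection numbers $L^3=\deg Q=2$, $L^2E=0$, $LE^2=-\deg C=-4$ and $E^3=-\deg N_{C/Q}=-12$ this gives $(-K_X)^3=30$. I would then determine the Zariski decomposition of $-K_X-t\tH=(3-t)L-(1-t)E$: it is nef on $[0,1]$, while on $[1,3]$ the negative part is $(t-1)E$ and the positive part is $(3-t)L$, the pseudo-effective threshold being $t=3$. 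Hence
\[ \vol(-K_X-t\tH)=\begin{cases} 2(3-t)^3-12(3-t)(1-t)^2+12(1-t)^3,& t\in[0,1],\\ 2(3-t)^3,& t\in[1,3],\end{cases} \]
and integrating gives $S_X(\ord_{\tH})=\tfrac{1}{30}\cdot\tfrac{65}{2}=\tfrac{13}{12}$, so $A_X(\ord_{\tH})/S_X(\ord_{\tH})=\tfrac{12}{13}$ and $\delta(X)\le\tfrac{12}{13}$. This part is purely numerical and depends only on $\deg C=4$ and $g(C)=1$, so it is identical for subfamilies (a) and (b).

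For the lower bound I would apply the Abban--Zhuang method with the flag beginning at $\tH$. Because $A_X(\tH)=1$ and $S_X(\tH)=\tfrac{13}{12}$, the Abban--Zhuang inequality reduces $\delta(X)\ge\tfrac{12}{13}$ to the single estimate $A_{\tH}(F)/S(W^{\tH}_{\bullet,\bullet};F)\ge\tfrac{12}{13}$ for every divisor $F$ over the surface $\tH$, where $W^{\tH}_{\bullet,\bullet}$ is the multigraded linear series obtained by refining $-K_X$ along $\tH$. Here I would use that $\tH\cong H$ (blowing up the Cartier divisor $C\seq H$ is an isomorphism on $H$), so the surface is $\IP^1\times\IP^1$ in case (a) and $\IP(1,1,2)$ in case (b), and that $\tH\cap E$ is a section of $E\to C$ realizing $C$ as a $(2,2)$ (biconic) curve. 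A second refinement along a suitable curve in $\tH$ --- a ruling, or the section $\tH\cap E$ --- reduces the remaining estimates to explicit one-parameter volume integrals on $\tH$, which I would bound below by $\tfrac{12}{13}$, with equality forced only by $\ord_{\tH}$ itself.

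I expect the main obstacle to be exactly this last step: making the Newton--Okounkov data of the refined series $W^{\tH}_{\bullet,\bullet}$ explicit and controlling its piecewise-polynomial volume function uniformly across the family. The chamber structure is genuinely different in the two subfamilies because $\IP(1,1,2)$ is singular, so both the choice of the curve in the flag and the resulting Zariski decompositions on $\tH$ must be adapted to each case before one can verify that every resulting ratio stays $\ge\tfrac{12}{13}$. The role of the Abban--Zhuang inequality is precisely to guarantee that this surface computation also controls valuations whose centre lies in $E$ but off $\tH$, so that no separate analysis of such valuations is needed.
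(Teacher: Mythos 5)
Your upper bound is correct and coincides with the paper's own computation: the Zariski decomposition of $-K_X-t\tH$ (nef on $[0,1]$, negative part $(t-1)E$ on $[1,3]$), the intersection numbers $L^3=2$, $LE^2=-4$, $E^3=-12$, and the resulting value $S_X(\tH)=\frac{1}{30}\cdot\frac{65}{2}=\frac{13}{12}$ all agree with the paper, so $\delta(X)\le \frac{A_X(\tH)}{S_X(\tH)}=\frac{12}{13}$ is established exactly as there.

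The lower bound, however, has a genuine gap, and it sits precisely where you locate the strength of your argument. The Abban--Zhuang estimate with a flag beginning at $\tH$ bounds only the local invariants $\delta_Z(X)$ for centers $Z$ contained in $C_X(\ord_{\tH})=\tH$ --- in the paper's Theorem \ref{Theorem: weighted AZ} the hypothesis is explicitly $Z\seq C_X(F)$ --- and it says nothing about valuations whose center is disjoint from $\tH$. Your closing claim that Abban--Zhuang ``guarantees that this surface computation also controls valuations whose centre lies in $E$ but off $\tH$'' is exactly backwards: such valuations, and more generally valuations centered at points of $X\setminus\tH$, are untouched by the estimate, and since $\Aut(X)$ is discrete there is no equivariant reduction available on $X$ itself. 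What your plan actually proves is $\inf_{p\in\tH}\delta_p(X)\ge\frac{12}{13}$, not $\delta(X)\ge\frac{12}{13}$. The paper's missing idea is a degeneration: the one-parameter subgroup $\lambda(t)\cdot[u,x,y,z,w]=[t^{-1}u,x,y,z,w]$ specializes $X=\Bl_C Q$ to $X_0=\Bl_C Q_0$ with $Q_0$ a cone, and $X_0$ carries a $\IG_m$-action; by \cite{Zhu21} it then suffices to bound $\delta_p(X_0)$ at $\IG_m$-invariant centers, which are only the vertex $p_0$ of $Q_0$ and the subvarieties of $\tH$. The vertex is handled by a separate refinement showing $\delta_{p_0}(X_0)>1$, while the points of $\tH$ are handled by the two-step surface computations you envisage (Propositions \ref{prop: local delta №2.23a} and \ref{prop: local delta №2.23b}, treating $\IP^1\times\IP^1$ and $\IP(1,1,2)$ separately, with the vertex of $\IP(1,1,2)$ requiring the different $\Diff_l(0)=\frac{1}{2}p$). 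Finally, lower semicontinuity of $\delta$ in families \cite{BL18b} transfers the bound back to the general fiber: $\delta(X)\ge\delta(X_0)=\frac{12}{13}$. Without this degeneration-plus-semicontinuity step, or some other device covering centers off $\tH$, your argument cannot close.
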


The divisorial valuation $\ord_{\tH}$ induces a special degeneration $\pi:(\CX,\xi) \to \IA^1$ of $X$ to $X_0$, where $X_0=\Bl_{C}Q_0$ and $Q_0$ is the cone over $(H, \CO_Q(1)|_H)$. 
Let $\xi_0\in N_\IR\cong\IR$ be the soliton candidate of $X_0$ (see the definition in Section \ref{sec:weighted}). Then $\xi_0=a_0 \cdot \xi$ for some $a_0>0$, and $v_0=a_0 \cdot \ord_{\tH}$ induces the special degeneration $(\CX,\xi_0)$. 


\begin{thm} 
\label{Theorem: Intro, №2.23(a), (X_0, xi_0) K-polystable}
For any $X$ in №2.23(a), the pair $(X_0, \xi_0)$ is weighted K-polystable. 
\end{thm}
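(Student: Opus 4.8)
The plan is to establish weighted K-polystability of $(X_0,\xi_0)$ through the weighted stability threshold, combined with an equality analysis. By the theory of \cite{HL23,BLXZ23}, once $\xi_0$ is fixed as the soliton candidate (so that the weighted Futaki invariant vanishes along the torus direction), weighted K-polystability of $(X_0,\xi_0)$ is equivalent to the valuative condition $\beta^g_{\xi_0}(E)\geq 0$ for every prime divisor $E$ over $X_0$, with equality precisely when $E$ is induced by a one-parameter subgroup of $\Aut(X_0,\xi_0)$. Concretely, I would first prove the weighted estimate $\delta^g(X_0)\geq 1$ to secure weighted K-semistability, and then upgrade to polystability by pinning down the equality locus. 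Here $g=g_{\xi_0}$ is the exponential weight attached to $\xi_0$, and $S^g$, $\delta^g$ denote the weighted expected vanishing order and stability threshold, obtained by integrating $g$ against the Duistermaat--Heckman measure of the relevant linear system; the soliton condition makes $\delta^g(X_0)=1$ with equality already forced along torus directions, so the genuine content is a non-strict inequality everywhere and strictness off the torus.

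The computation exploits the explicit geometry $X_0=\Bl_C Q_0$, where $Q_0$ is the projective cone over $(H,\CO_Q(1)|_H)=(\IP^1\ti\IP^1,\CO(1,1))$. The cone structure furnishes the torus $\IT=\IG_m$ along whose direction $\xi_0=a_0\cdot\xi$ lies, while the base $\IP^1\ti\IP^1$ carries the residual $\mathrm{PGL}_2\ti\mathrm{PGL}_2$-symmetry stabilizing the biconic curve $C\in|\CO(2,2)|$. Since by weighted equivariant K-stability $\delta^g$ may be tested on $\IT$-invariant valuations, I would apply the weighted analogue of the Abban--Zhuang method (\cite{AZ22}) to the flag $X_0\supset\tH_0\supset\ell\supset p$, where $\tH_0$ is the strict transform of the divisor at infinity $\IP^1\ti\IP^1$, $\ell$ a ruling line, and $p$ a general point. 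This reduces the estimate of $A_{X_0}(E)/S^g(E)$ to weighted intersection and Okounkov-body computations on $\tH_0$ and on the exceptional divisor $E_0$ over $C$, each twisted by the weight $g$ determined by $a_0$.

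Carrying out these weighted integrals along the flag, I would bound $S^g(E)\leq A_{X_0}(E)$ for every $\IT$-invariant $E$, obtaining $\delta^g(X_0)\geq 1$; the choice of $\xi_0$ as the soliton candidate is exactly what balances the weighted integrals so that this inequality holds. The decisive point for \emph{poly}stability is the equality analysis: I would show that any $E$ attaining $A_{X_0}(E)=S^g(E)$ must be horizontal with respect to the cone projection and correspond to a degeneration of the biconic curve $C$ inside $\IP^1\ti\IP^1$. Because in family №2.23(a) the curve $C$ is GIT-polystable for the $\mathrm{PGL}_2\ti\mathrm{PGL}_2$-action on $|\CO(2,2)|$, every such equality degeneration is a product, i.e.\ induced by $\Aut(X_0,\xi_0)$, which yields weighted K-polystability.

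The main obstacle will be twofold. First, the weighted $S$-invariants require integrating the exponential weight $g_{\xi_0}$ against the Duistermaat--Heckman measure of the cone-plus-blow-up, and keeping these integrals controlled as functions of $a_0$ is the principal computational difficulty, since the soliton value $a_0$ enters transcendentally and the semistability inequality must therefore be verified with care rather than by a clean numerical identity. Second, the polystability step is genuinely delicate: isolating exactly which $\IT$-invariant valuations can attain equality and matching them to the GIT-stability of $C$ is where the argument is most subtle, and it is precisely this step that will distinguish subfamily №2.23(a), where $C$ is GIT-polystable, from №2.23(b).
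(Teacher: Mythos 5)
Your semistability step is essentially the paper's: refine $R_\bu=R(X_0,-K_{X_0})$ by $\tH\cong\IP^1\times\IP^1$, observe that the choice of the soliton candidate forces $S^g(R_\bu;\tH)=A_{X_0}(\tH)$, and run the weighted Abban--Zhuang estimate along the flag $\tH\supset\ell\supset p$ (together with refinements by exceptional lines of $(1,1)$- and $(2,1)$-blowups at points of $C$). Up to that point your plan matches Section \ref{Section: compute S^g of №2.23(a)}.

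The gap is in your polystability upgrade. You invoke a valuative criterion ``$\beta^g_{\xi_0}(E)\ge 0$ for every prime divisor $E$, with equality precisely when $E$ comes from a one-parameter subgroup of $\Aut(X_0,\xi_0)$'' and then propose to match equality valuations to GIT degenerations of the biconic curve $C$. Neither step is available as stated: there is no citable divisorial equality characterization of weighted K-polystability of this form (the equality case is only known to produce a $\IG_m$-invariant \emph{quasi-monomial} valuation $v$ with $A_{X_0}(v)=S^g(R_\bu;v)$, by \cite[Lemma 4.14]{BLXZ23}), and you give no mechanism converting such an equality valuation into a degeneration of $C$ inside $|\CO(2,2)|$ --- that correspondence is essentially the content of Theorem \ref{Theorem: moduli of optimal degeneration of 2.23(a)}, whose proof itself rests on the explicit $S^g$ computations, so your argument is circular at this point. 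Worse, in №2.23(a) the curve $C$ is \emph{smooth}, hence GIT-\emph{stable} with finite stabilizer in $\mathrm{PGL}_2\times\mathrm{PGL}_2$: there are no nontrivial product degenerations for your equality locus to match, so your strategy implicitly requires the equality locus off the torus to be empty, which is precisely what must be proved and does not follow formally from GIT considerations. The paper closes this by a simpler route: the explicit weighted computations give the \emph{strict} bound $\delta^g(\IP^1\times\IP^1;W_\bu)\approx 3/2.782066>1$ for smooth $C$; if $(X_0,\xi_0)$ were strictly weighted K-semistable, \cite[Lemma 4.14]{BLXZ23} yields a $\IG_m$-invariant quasi-monomial $v$ computing equality, and since every point of $\tH$ is $\IG_m$-fixed, the splitting $\Val^{\IT,\circ}_{X_0}\cong\Val^\circ_{\tH}\times N_\IR(\IG_m)$ descends $v$ to some $v_0\in\Val^\circ_{\tH}$ with $A_{\tH}(v_0)/S^g(W_\bu;v_0)=1$, contradicting the strict inequality. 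So the ``delicate equality analysis'' you flag as the main difficulty is in fact unnecessary for smooth $C$, and in the form you propose it is not justified.
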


Combining with Theorem \ref{Theorem: H minimizer = central fiber K-ss}, we get the optimal degeneration of $X$ in №2.23(a). 

\begin{cor}
The divisorial valuation $v_0 = a_0 \cdot \ord_{\tH}$ minimizes the $\BH$-functional of $X$ in №2.23(a). Hence $(X_0,\xi_0)$ is the optimal degeneration of $X$. 
\end{cor}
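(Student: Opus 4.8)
The plan is to read off the $\BH$-minimizing property of $v_0$ from the weighted K-polystability of $(X_0,\xi_0)$ supplied by Theorem~\ref{Theorem: Intro, №2.23(a), (X_0, xi_0) K-polystable}, and then to invoke the equivalence between the $\BH$-minimizer and the optimal degeneration established in \cite{HL20,BLXZ23}. Recall the setup preceding the statement: the valuation $v_0=a_0\cdot\ord_{\tH}$ induces the special degeneration $(\CX,\xi_0)$ of $X$ with central fiber $X_0$, and $\xi_0=a_0\cdot\xi$ is by construction the soliton candidate of $X_0$, i.e. the optimal vector field minimizing the weighted functional along the one-parameter torus acting on $(\CX,\xi_0)$. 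In particular $\xi_0$ already makes the first variation of $\BH$ along this direction vanish, so that $v_0$ is a critical point of $\BH$; the content to be established is that it is in fact the global minimizer.

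The decisive input is \cite[Theorem 5.3]{HL20}, which characterizes the global minimizer of $\BH$: among special degenerations equipped with their soliton candidate, $(\CX,\xi_0)$ attains the infimum of $\BH$ if and only if the central fiber $(\CX_0,\xi_0)=(X_0,\xi_0)$ is weighted K-semistable. Thus the proof reduces to verifying weighted K-semistability of $(X_0,\xi_0)$, and I would carry this out in three steps. First, confirm that the normalization $a_0>0$ is exactly the one for which $\xi_0$ is the soliton candidate of $X_0$, so that the hypotheses of \cite[Theorem 5.3]{HL20} are met. Second, apply Theorem~\ref{Theorem: Intro, №2.23(a), (X_0, xi_0) K-polystable}: weighted K-polystability forces weighted K-semistability, whence \cite[Theorem 5.3]{HL20} yields that $v_0$ minimizes the $\BH$-functional. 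Third, since the unique $\BH$-minimizer induces the optimal degeneration by \cite{HL20,BLXZ23}, the special degeneration $(\CX,\xi_0)$ is the optimal degeneration of $X$, with central fiber $(X_0,\xi_0)$.

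The substantive work is entirely contained in Theorem~\ref{Theorem: Intro, №2.23(a), (X_0, xi_0) K-polystable}; the Corollary is a formal consequence of it together with the cited structural results. The only points demanding care are bookkeeping ones: matching $\xi_0$ with the soliton candidate in the precise normalization required by \cite[Theorem 5.3]{HL20}, so that its criterion genuinely detects the $\BH$-minimizer rather than some other critical direction, and using weighted K-\emph{polystability} (rather than mere semistability) together with the uniqueness of the $\BH$-minimizer in \cite{HL20,BLXZ23} to conclude that $(X_0,\xi_0)$ is the \emph{unique} optimal degeneration, which by \cite{HL23,BLXZ23} moreover admits a \KRS with soliton vector field $\xi_0$. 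I would expect no essential difficulty beyond these verifications.
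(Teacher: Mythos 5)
Your proposal is correct and follows essentially the same route as the paper: the paper derives this corollary precisely by combining the weighted K-polystability (hence semistability) of $(X_0,\xi_0)$ from Theorem \ref{Theorem: Intro, №2.23(a), (X_0, xi_0) K-polystable} with the characterization of the $\BH$-minimizer in \cite[Theorem 5.3]{HL20} (Theorem \ref{Theorem: H minimizer = central fiber K-ss}), using the remark that the $\IR$-TC induced by $a_0\cdot\ord_{\tH}$ is $(\CX,a_0\xi)=(\CX,\xi_0)$. Your additional bookkeeping points (the normalization $a_0$ matching the soliton candidate, and uniqueness of the minimizer via the strict convexity results of \cite{BLXZ23}) are exactly the ingredients the paper relies on implicitly.
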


Moreover, we have a description of the moduli space of the KRS Fano varieties $(X_0, \xi_0)$. 

\begin{thm}
\label{Theorem: Intro, moduli of optimal degeneration of 2.23(a)}
Let $Q_0=C(\IP^1\times\IP^1, \CO(1,1))$ be a cone and $H\cong \IP^1\times \IP^1$ be the section at infinity. Let $X_0$ be the blowup of $Q_0$ along $C=Q'_0|_{H}$ where $Q'_0\in|\CO_{Q_0}(2)|$. Denote by $\xi_0$ the soliton candidate of $X_0$. 
Then $(X_0, \xi_0)$ is weighted K-semistable (weighted K-polystable) if and only if $C \seq H$ is GIT-semistable (GIT-stable or polystable) as a $(2,2)$-divisor in $\IP^1 \times \IP^1$. 
\end{thm}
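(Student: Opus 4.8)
The plan is to reduce the weighted K-stability of $(X_0,\xi_0)$ to the finite-dimensional GIT problem for $(2,2)$-curves, by classifying the relevant test configurations and matching the weighted Futaki invariant with a Hilbert--Mumford weight. Throughout, $\IT=\IG_m=\langle\xi_0\rangle$ denotes the cone-scaling torus acting on $Q_0=C(\IP^1\ti\IP^1,\CO(1,1))$ by scaling the cone direction, fixing the vertex and the section $H\cong\IP^1\ti\IP^1$ at infinity, and lifted to $X_0=\Bl_C Q_0$. The first step is the weighted equivariant reduction of \cite{HL23,BLXZ23}: $(X_0,\xi_0)$ is weighted K-semistable (resp.\ K-polystable) if and only if $\Fut_{\xi_0}(\CY,\eta)\ge 0$ for every $\IT$-equivariant special test configuration (resp.\ with equality only for product configurations). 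This replaces the a priori infinite-dimensional condition by a check over special degenerations compatible with the torus.

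Second --- and this is the geometric heart --- I would classify these equivariant special degenerations. Since $Q_0$ is a fixed projective cone and the $\IT$-action fixes $H$ and the vertex, I would show that every such degeneration of $X_0=\Bl_C Q_0$ is induced by a one-parameter subgroup $\lam\colon\IG_m\to G:=\Aut^\circ(\IP^1\ti\IP^1)=\mathrm{PGL}_2\ti\mathrm{PGL}_2$ degenerating $C\seq H$ to $C_0=\lim_{t\to 0}\lam(t)\cdot C$, with central fibre $\Bl_{C_0}Q_0$. The content is that the blow-up centre $C$ is the only non-rigid datum, so up to the $G\ti\IT$ symmetry every competitor arises by moving $C$ inside $H$. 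Establishing completeness of this list --- that no destabilizer instead degenerates the cone $Q_0$ itself or the cone structure --- is the step I expect to be the main obstacle; I would attack it through the rigidity of the cone $Q_0$ and an analysis of the $\IT$-graded anticanonical ring of $X_0$, where $C$ is read off as the only modulus.

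Third, for such a $\lam$-degeneration I would compute $\Fut_{\xi_0}(\lam)$ explicitly via the weighted volume (Duistermaat--Heckman) expansion on $X_0$, reducing the intersection-theoretic contribution to the weight of $\lam$ on the defining section of $C$ in $H^0(\IP^1\ti\IP^1,\CO(2,2))$. The expected outcome is a proportionality $\Fut_{\xi_0}(\lam)=c\cdot\mu^{\CO(2,2)}(C,\lam)$ with a universal constant $c>0$ independent of $\lam$ and $C$, the right-hand side being the Hilbert--Mumford weight of $C$ regarded as a point of $\IP\big(H^0(\IP^1\ti\IP^1,\CO(2,2))\big)$ under $G$. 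Pinning down this constant, and checking that it is positive and independent of $\lam$, is the computational crux that makes the sign comparison clean.

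Finally I would conclude by the numerical criterion. Weighted K-semistability becomes $\Fut_{\xi_0}(\lam)\ge 0$ for all $\lam$, i.e.\ $\mu^{\CO(2,2)}(C,\lam)\ge 0$ for all $\lam$, which is precisely GIT-semistability of $C$ by Hilbert--Mumford. For the polystable statement I would invoke reductivity of $\Aut(X_0,\xi_0)$ together with the equality analysis: $\Fut_{\xi_0}(\lam)=0$ for a non-product $\lam$ forces the $G$-orbit of $C$ to be non-closed in the semistable locus, while the ever-present torus $\IT$ accounts exactly for the genuine product test configurations; hence weighted K-polystability corresponds to the $G$-orbit of $C$ being closed, i.e.\ GIT-polystability, covering both the finite-stabilizer (GIT-stable) and positive-dimensional-stabilizer (strictly polystable) cases. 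One could alternatively try to identify weighted K-stability of $(X_0,\xi_0)$ with log K-stability of the del Pezzo pair $(\IP^1\ti\IP^1,\beta C)$ for a suitable coefficient $\beta$, for which the K-stability/GIT equivalence is classical; but the direct Futaki computation is what controls the relevant weight precisely.
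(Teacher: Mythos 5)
Your reduction to $\IT$-equivariant special test configurations via \cite{HL23,BLXZ23} is legitimate, and half of the equivalence would go through on your scheme: a one-parameter subgroup $\lam$ of $G=\mathrm{PGL}_2\ti\mathrm{PGL}_2$ acting on $H$ lifts to the cone $Q_0$ and to $X_0=\Bl_C Q_0$, yields a special test configuration with central fiber $\Bl_{C_0}Q_0$, and a Futaki/Hilbert--Mumford comparison of the kind you sketch would give ``GIT-unstable $\Rightarrow$ weighted K-unstable'' and ``weighted K-semistable $\Rightarrow$ GIT-semistable''. The genuine gap is exactly the step you flag as the main obstacle and then leave unaddressed: completeness of your classification, i.e.\ that \emph{every} $\IT$-equivariant special degeneration of $X_0$ moves only the curve $C$ inside a rigid $Q_0$. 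Rigidity of the cone cannot supply this: special test configurations correspond to weakly special $\IT$-invariant valuations over $X_0$ with finitely generated associated graded rings, and these include divisorial valuations of weighted blowups centered at points of $\tH$, which do not preserve the ``cone plus moving curve'' description. Worse, the paper's computations show such valuations hit the critical numerics precisely in the polystable boundary cases: for a point $p\in C$ of multiplicity two, the exceptional line $E$ of the ordinary blowup satisfies $A_{\IP^1\ti\IP^1}(E)=S^g(W_\bu;E)$ (the $k=2$ case of Remark \ref{Remark: 2.23(a) (1,1)-blowup}), a vanishing weighted beta invariant \emph{outside} your proposed family. So when $C=C_1+C_2$ is GIT-polystable with nodes, your equality analysis (``zero Futaki for non-product $\lam$ forces a non-closed orbit'') does not close: you have degenerations of zero weight that are not of the form $\Bl_{C_0}Q_0$ with $C_0$ in the orbit closure, and no mechanism to show they are of product type. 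Since the direction GIT-(semi/poly)stable $\Rightarrow$ weighted K-(semi/poly)stable is where all the content lies, the proof is incomplete at its core.

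The paper avoids classifying test configurations altogether. It proves the K-polystability statements by the weighted Abban--Zhuang estimate: refining $R_\bu$ by $\tH\cong\IP^1\ti\IP^1$ and then by rulings and by exceptional lines of $(1,1)$- and $(2,1)$-blowups, it obtains pointwise lower bounds $\delta^g_p>1$ for smooth $C$ (Remarks \ref{Remark: 2.23(a) (1,1)-blowup} and \ref{Remark: 2.23(a) (2,1)-blowup}), and the same tables show that GIT-unstable $C$ (triple points, a curve tangent to a ruling component, a double ruling) produces $A/S^g<1$, hence weighted K-instability. For strictly GIT-semistable $C$ it degenerates $C$ to a polystable curve and invokes semicontinuity, reducing to the polystable boundary of Theorem \ref{Theorem: GIT of biconic curves}(c); there it uses a toric computation via \cite[Theorem 5.1]{BLXZ23} when $C$ is a union of four rulings, and a complexity-one argument when $C=C_1+C_2$, where the equality valuations noted above are absorbed into the two-dimensional torus directions and ruled out as destabilizers via \cite[Lemma 4.14]{BLXZ23}. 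To rescue your route you would need an actual proof of the completeness claim --- which is essentially as strong as the theorem itself --- or to replace it by the valuative criterion, as the paper does.
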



However, similar results do not hold for Fano threefolds in №2.23(b). 

\begin{prop}
For any $X$ in №2.23(b), the pair $(X_0, \xi_0)$ is weighted K-unstable. 
\end{prop}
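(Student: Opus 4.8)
The plan is to show that for $X$ in №2.23(b), the pair $(X_0,\xi_0)$ is weighted K-unstable by exhibiting a destabilizing test configuration, exploiting the key structural difference between the two subfamilies. Recall that in №2.23(b) the hyperplane section is $H\cong\IP(1,1,2)$ rather than $\IP^1\times\IP^1$, so the cone $Q_0$ and the resulting $X_0=\Bl_C Q_0$ have a genuinely different geometry — in particular $\IP(1,1,2)$ is itself a singular (orbifold) surface, and this singularity should obstruct weighted K-(semi)stability. The central object is the weighted (equivariant) $\beta$-invariant, or equivalently the weighted Futaki invariant, computed with respect to the soliton vector field $\xi_0$; weighted K-instability amounts to finding a special degeneration (a $\mathbb{T}$-equivariant test configuration, or a divisorial valuation normalized against $\xi_0$) whose weighted Futaki / weighted $\beta$-invariant is negative.

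\medskip

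First I would set up the weighted stability formalism explicitly on $X_0$: identify the torus $\IT$ acting on $(X_0,\xi_0)$ (at least the one-parameter subgroup generated by $\xi_0$, together with any extra symmetry coming from the cone structure), and write down the weighted volume and weighted barycenter data needed to evaluate the weighted $\beta$-invariant $\beta_{\xi_0}(\cdot)$. Second, I would produce a natural candidate destabilizing valuation. The obvious candidate is the divisorial valuation associated to the exceptional divisor of the weighted blowup resolving the $\IP(1,1,2)$-singularity of the section at infinity, or equivalently the valuation $\ord_{\tH}$ (or a further degeneration thereof) now computed on the №2.23(b) geometry; the point is that the same divisor $\tH$ which minimizes $\delta$ for all of №2.23 behaves differently once weighted by $\xi_0$ because the cone over $\IP(1,1,2)$ contributes extra discrepancy/volume corrections absent in the $\IP^1\times\IP^1$ case. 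Third, I would compute $\beta_{\xi_0}$ (or the weighted Futaki invariant) of this test configuration and show it is strictly negative, which by the weighted analogue of the valuative criterion (following \cite{HL23, BLXZ23}) yields weighted K-instability.

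\medskip

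Concretely, the computation reduces to comparing two integrals over the Newton–Okounkov body (or the moment polytope weighted by $e^{\xi_0}$): the weighted log-discrepancy $A_{X_0,\xi_0}(v)$ against the weighted expected vanishing order, and showing the inequality fails in the direction singled out by the orbifold point. I would run this in parallel with the №2.23(a) computation from Theorem \ref{Theorem: Intro, №2.23(a), (X_0, xi_0) K-polystable}, isolating exactly the terms that differ between $H\cong\IP^1\times\IP^1$ and $H\cong\IP(1,1,2)$; the discrepancy in those terms should carry the sign that flips polystability into instability. A useful consistency check is that the value $a_0$ defining $\xi_0=a_0\cdot\ord_{\tH}$ and the associated weighted volume are themselves determined by the same cone data, so the instability should manifest as the weighted $\beta$ of a \emph{different} valuation dropping below that of $\tH$ — reflecting precisely the statement in the introduction that the $\delta$-minimizer and the $\BH$-minimizer now disagree.

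\medskip

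The main obstacle I anticipate is the explicit evaluation of the weighted invariants on the singular cone $Q_0=C(\IP(1,1,2),\CO(1))$ and its blowup: the weighted volume and weighted $\beta$-functional involve transcendental (exponential-weighted) integrals over the moment polytope, and the $\IP(1,1,2)$-orbifold structure means one must track the weighted-blowup/quotient-singularity contributions carefully rather than using the smooth-surface formulas valid in case (a). Correctly normalizing $\xi_0$ and the test configuration so that the comparison with case (a) is apples-to-apples — i.e.\ making sure any negative sign genuinely comes from geometry and not from a normalization artifact — will be the delicate point, and I would guard against it by cross-checking the №2.23(a) numbers against the already-established polystability before trusting the №2.23(b) sign.
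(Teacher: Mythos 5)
Your general framework --- find a valuation $v$ with $A_{X_0}(v)<S^g(v)$ for the weight $g(s)=e^{-\xi_0 s}$ and invoke the weighted valuative criterion --- is the right one, and it is the one the paper uses. But your leading concrete candidate provably fails, and the actual destabilizer together with its computation is the entire content of the proof. You propose $\ord_{\tH}$ ``now computed on the №2.23(b) geometry,'' suggesting the same divisor behaves differently once weighted. It cannot: the DH measure of the $\ord_{\tH}$-filtration on $\BP=[-1,2]$ is $(2+s)^2$ for $-1\le s\le 0$ and $(2-s)^2$ for $0\le s\le 2$ in case (b) exactly as in case (a) (both hyperplane sections are degree-two surfaces, so the relevant volumes agree), hence the soliton candidate $\xi_0$ is literally the same number as in (a), and by its very definition as the minimizer of $\BH(\xi)=\log\big(\int_\BP e^{-\xi s}\DH_\BP(\dif s)\big)$ one gets $S^g(R_\bu;\tH)=A_{X_0}(\tH)$. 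So the weighted $\beta$ of $\ord_{\tH}$, and of every rescaling $a\cdot\ord_{\tH}$, vanishes identically; your own closing remark that the instability must come from a \emph{different} valuation is correct, but you never identify that valuation, which is the missing idea.

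The paper's destabilizer is concrete and its verification is one integral: let $l\seq\tH\cong\IP(1,1,2)$ be a ruling of the cone, and let $D$ be its pullback to $X_0$, a prime divisor with $A_{X_0}(D)=1$; via the refinement $W_\bu$ of $R_\bu$ by $\tH$ one has $S^g(R_\bu;\ord_D)=S^g(W_\bu;\ord_l)\approx 1.04275>1$, so $(X_0,\xi_0)$ is weighted K-unstable. Your alternative candidate --- the exceptional divisor of a weighted blowup resolving the orbifold point of $\IP(1,1,2)$ --- is centered at the vertex rather than along a ruling, and nothing in your outline shows its weighted $\beta$ is negative; asserting that the quotient singularity ``should'' carry the destabilizing sign is a heuristic, not a proof, and the analogous vertex-centered computations in the paper's Section 4 (for the unweighted $\delta$) in fact satisfy the required bounds rather than violating them. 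A smaller conceptual slip: there is no ``weighted log discrepancy'' $A_{X_0,\xi_0}$ in this theory --- the weight enters only through $S^g$, while $A$ remains the usual log discrepancy. As written, the proposal is a plausible research plan whose key step (naming $D$ and evaluating $S^g(W_\bu;\ord_l)$) is absent and whose primary candidate is provably non-destabilizing.
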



We are now ready to describe the optimal degeneration of Fano threefolds in №2.23(b). Assume that $X=\Bl_C Q$ is in №2.23(b), where $C= Q'|_H$ does not pass through the vertex $o\in X$ of $H\cong \IP(1,1,2)$. Let $E_o$ be the exceptional divisor of the blowup $\Bl_o X\to X$, which is a toric divisor over $Q$ with respect to the $\IT$-action (since $C$ is not $\IT$-invariant, the $\IT$-action could not be lifted to $X$). Then the divisorial valuation $\ord_{E_o}$ induces a special degeneration $(\CX, \eta)$ of $X$ with central fiber $X_0 = \Bl_{C_0} Q$, where $C_0=2H'|_H$ is a non-reduced plane conic and $H'\in|\CO_Q(1)|$ is another $\IT$-invariant hyperplane section not containing $o$. The Fano variety $X_0$ has $A_1$-singularity along a smooth plane conic curve and is smooth outside the curve. 
We denote by $\eta_0$ the soliton candidate of $X_0$ with respect to the $\IG_m$-action induced by $(\CX,\eta)$. Then $\eta_0 = b_0\cdot \eta$ for some $b_0 > 0$, and $v_0=b_0 \cdot \ord_{E_o}$ induces the special degeneration $(\CX,\eta_0)$. 

\begin{thm} 
\label{Theorem: Intro, №2.23(b), (X_0, eta_0) K-polystable}
For any $X$ in №2.23(b), the pair $(X_0, \eta_0)$ is weighted K-polystable. 
\end{thm}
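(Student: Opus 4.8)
The plan is to establish weighted K-polystability of $(X_0,\eta_0)$ by reducing it to an equivariant statement and then estimating a weighted stability threshold, in the spirit of \cite{BLXZ23}. First I would pin down the symmetry group. Since $C_0=2H'|_H$ is a $\IT$-invariant non-reduced conic, the $\IT=\IG_m^2$-action on $Q$ lifts to $X_0=\Bl_{C_0}Q$; I would determine the maximal torus $\IT'\seq\Aut(X_0)$ (which may be exactly this $\IG_m^2$, possibly enlarged by a finite part coming from the symmetry of the conic) and observe that the soliton candidate $\eta_0$ necessarily lies in $N(\IT')_\IR$. The $A_1$-singularities of $X_0$ along the smooth conic are klt (indeed canonical), so the valuative K-stability machinery applies without modification.

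Second, I would compute the soliton candidate $\eta_0$ explicitly. It is the unique minimizer of the weighted volume $\vol_{\eta}(X_0)$ over $N(\IT')_\IR$, equivalently the zero of the weighted Futaki invariant. I would evaluate the Duistermaat--Heckman measure of $(X_0,-K_{X_0})$ with respect to $\IT'$ and solve the resulting minimization. Because the configuration is highly symmetric, I expect the minimizer to be forced onto the one-parameter direction spanned by $\eta=\ord_{E_o}$, recovering $\eta_0=b_0\cdot\eta$ with an explicit $b_0>0$.

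Third, with $\eta_0$ fixed, I would invoke the equivariant reduction: by \cite{HL23,BLXZ23} it suffices to test weighted K-polystability against $\IT'$-equivariant special degenerations, so the problem becomes the computation of the weighted stability threshold $\delta_{\eta_0,\IT'}(X_0)$ together with an analysis of its minimizers. To bound $\delta_{\eta_0,\IT'}(X_0)$ from below by $1$, I would apply the weighted version of the Abban--Zhuang method of \cite{AZ22}: choose a convenient $\IT'$-invariant flag $X_0\supset S\supset \ell$, with $S$ a natural invariant surface such as the exceptional divisor of $\Bl_{C_0}Q\to Q$ or the strict transform of $H'$, and $\ell$ an invariant curve on it, then estimate the weighted expected-vanishing invariants on $S$ by refinement and adjunction, carefully carrying the soliton weight $g_{\eta_0}$ through each integral and accounting for the contribution of the singular conic to the relevant log discrepancies.

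Finally, since $\Aut(X_0)$ is positive-dimensional one cannot expect strict stability, so I anticipate $\delta_{\eta_0,\IT'}(X_0)=1$ and the real content is the polystability (equality) analysis: I would show that every $\IT'$-invariant divisorial valuation attaining the weighted minimum is induced by a one-parameter subgroup of $\IT'$, i.e. that the only special degenerations with vanishing weighted Donaldson--Futaki invariant are product configurations. Concretely this reduces to enumerating the $\IT'$-invariant prime divisors over $X_0$ that could destabilize and verifying that their weighted $\beta$-invariant is strictly positive away from the torus directions, using the finite symmetries of the conic to cut down the cases. I expect this last step, the equality analysis on the singular threefold while keeping track of the soliton weight, to be the main obstacle; the weighted Abban--Zhuang estimate itself and the determination of $\eta_0$ should be comparatively routine once the torus and the Duistermaat--Heckman data are in hand.
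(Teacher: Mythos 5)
Your overall architecture --- lift the torus to $X_0$, fix the soliton candidate from Duistermaat--Heckman data, bound an equivariant weighted stability threshold from below via the weighted Abban--Zhuang estimate, and conclude polystability by showing any minimizing invariant valuation comes from the torus --- coincides with the paper's (which runs the last step through \cite[Lemma 4.14]{BLXZ23} and the product structure $\Val^{\IG_m^2,\circ}_{X_0}\cong \Val^\circ_{E}\times N_\IR(\IG_m^2)$, rather than your proposed enumeration of invariant prime divisors, which would be considerably harder to carry out). However, two of your specifics would fail as written. First, the symmetry claim for $\eta_0$: there is no finite symmetry of $X_0$ forcing the $\BH$-minimizer over the full two-torus onto the line $\IR\cdot\ord_{E_o}$. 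The paper computes $\eta_0\approx 0.1546$ only along the degeneration direction, from an Okounkov body built in a chart near $E_o$ and compatible with the $\IG_m$-action; that this is the genuine soliton candidate is confirmed a posteriori by the weighted K-semistability verification itself, not by symmetry. Second, the choice of the first refinement divisor is not free: the paper refines $R_\bu$ by $E_o\cong\IP^2$ itself (a divisor \emph{over} $X_0$, extracted at the vertex $o$), precisely because the soliton normalization yields $S^g(R_\bu;E_o)=A_{X_0}(E_o)$, so that term contributes exactly $1$ to the Abban--Zhuang minimum. For your suggested surfaces ($E_{C_0}$ or the strict transform of $H'$) there is no a priori reason that $A/S^g\ge 1$, and if it drops below $1$ the estimate gives nothing.

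The decisive technical device your plan does not identify is the treatment of points $p$ on the smooth conic $C_2=\tH|_{E_o}\subset E_o\cong\IP^2$, where ordinary refinements give local weighted delta exactly $1$. The paper resolves this borderline case by the $(2,1)$-weighted blowup along the line tangent to $C_2$ at $p$, whose exceptional curve $E$ satisfies $S^g(W_\bu;E)=3=A_{\IP^2}(E)$, followed by adjunction to the log pair $(\IP^1,\frac{1}{2}p_0)$ with the orbifold different at the singular point $p_0$ of the weighted blowup; the strict inequality $\delta^g(\IP^1,\frac{1}{2}p_0;W^E_\bu)\approx 1/0.992824>1$ at this last level is exactly what powers the polystability contradiction. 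Reaching it also requires the explicit Zariski decomposition of $-K_{X_0}-tE_o$ in its three chambers $t\in[0,1]$, $[1,4]$, $[4,6]$, which you gesture at but do not supply. So while your outline matches the paper's route, it stalls at the very step you flag as the main obstacle unless this tangent weighted blowup and the $E_o$-refinement are put in place.
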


\begin{cor}
The divisorial valuation $v_0 = b_0 \cdot \ord_{E_o}$ minimizes the $\BH$-functional of $X$ in №2.23(b). Hence $(X_0,\eta_0)$ is the optimal degeneration of $X$. 
\end{cor}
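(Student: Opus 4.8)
The plan is to deduce this corollary formally from the weighted K-polystability of $(X_0,\eta_0)$ established in the preceding theorem, combined with the variational characterization of the $\BH$-minimizer, exactly mirroring the argument used for the parallel corollary in №2.23(a).

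First I would recall the abstract framework. By \cite{HL20,BLXZ23}, the $\BH$-functional of a K-unstable Fano manifold admits a unique minimizer up to scaling of the associated valuation, and this minimizer is a quasi-monomial valuation inducing a special degeneration whose central fiber, paired with its soliton candidate, is the optimal degeneration predicted by the Hamilton--Tian conjecture. The bridge between stability and minimization is \cite[Theorem 5.3]{HL20}, which is precisely the input already invoked for №2.23(a): a special valuation $v$ minimizes the $\BH$-functional if and only if the central fiber of the degeneration it induces, equipped with its soliton candidate, is weighted K-semistable.

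Next I would verify the hypotheses of this criterion for $v_0=b_0\cdot\ord_{E_o}$. By the construction preceding the theorem, $\ord_{E_o}$ induces the special degeneration $(\CX,\eta)$ with central fiber $X_0=\Bl_{C_0}Q$, which is genuinely special since $X_0$ is a klt $\IQ$-Fano variety (having only $A_1$-singularities along a smooth plane conic); after rescaling, the generator $\eta_0=b_0\cdot\eta$ is exactly the soliton candidate of $X_0$, so $v_0$ induces $(\CX,\eta_0)$ with weighted central fiber $(X_0,\eta_0)$. The preceding theorem asserts that $(X_0,\eta_0)$ is weighted K-polystable, hence in particular weighted K-semistable. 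Applying \cite[Theorem 5.3]{HL20} then shows that $v_0$ minimizes the $\BH$-functional.

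Finally, the uniqueness statement in \cite{HL20,BLXZ23} upgrades this to the conclusion that $v_0$ is the $\BH$-minimizer, so $(X_0,\eta_0)$ is the optimal degeneration of $X$; and since the central fiber is weighted K-polystable rather than merely semistable, \cite{HL23,BLXZ23} guarantee that $(X_0,\eta_0)$ admits a \KRS and is therefore the genuine Gromov--Hausdorff limit. The main obstacle is not located in this corollary, which is a formal application once the criterion is in hand, but upstream: correctly identifying $\eta_0$ as the soliton candidate and matching normalizations so that weighted K-semistability of $(X_0,\eta_0)$ translates into $\BH$-minimization in the sense of \cite[Theorem 5.3]{HL20}. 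All the substantive work is carried by the preceding theorem.
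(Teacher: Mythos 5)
Your proposal is correct and takes essentially the same route as the paper, which derives this corollary exactly as in the №2.23(a) case: weighted K-polystability (hence K-semistability) of $(X_0,\eta_0)$ from the preceding theorem, combined with \cite[Theorem 5.3]{HL20} applied to the rescaled valuation $v_0=b_0\cdot\ord_{E_o}$ whose induced $\IR$-test configuration is $(\CX,\eta_0)$. One small slip worth noting: the $\BH$-minimizer is unique outright (the $\BH$-functional is not invariant under rescaling of valuations, which is precisely why the soliton normalization $b_0$ is singled out), not merely ``unique up to scaling,'' but this does not affect your argument since you apply the criterion to the correctly normalized $v_0$.
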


It was proved by \cite[Theorem 6.4]{BLXZ23} that the $\BH$-invariant is constructible and lower semi-continuous on a locally stable family of log Fano pairs. Our examples illustrate that the $\BH$-invariant can indeed be strictly lower semi-continuous, which is a new phenomenon. In fact, according to \cite{MT22}, we know that family №2.23 has a two-dimensional parameter space, where family №2.23(b) constitutes its one-dimensional closed subset $Z$, and the remaining open set $U$ parametrizes family №2.23(a). Our proof shows that the $\BH$-invariant $h(X_p)$ is constant when $p\in Z$ or $p\in U$. However, if $p$ transits from $U$ to $Z$, the value of $h(X_p)$ will decrease abruptly. This is a manifestation of the lower semi-continuity of $\BH$-invariant.

Recently, \cite{ACC+} are working on a project which aims to find all K-polystable smooth Fano threefolds in each family of Mori-Mukai's list. It is natural for us to ask for more under the framework of the Tian problem. If $X$ is K-polystable, then the optimal degeneration is trivial. If $X$ is strictly K-semistable, then there exists a unique K-polystable degeneration by \cite{LWX21} as its optimal degeneration. If $X$ is K-unstable but admits K\"ahler-Ricci soliton, then by \cite{TZ07,TZ15,TZZZ13}, the product test configuration induced by the soliton vector field gives the optimal degeneration. If $X$ is K-unstable and does not admit K\"ahler-Ricci soliton, then we need to determine the optimal degeneration $X_{\infty}$ explicitly. We expect the method developed in this paper will help to give a complete answer to Tian Problem for smooth Fano threefolds in the future. 


\noindent {\bf Acknowledgments}. The authors would like to thank their advisor, Gang Tian, for suggesting the problem of finding optimal degenerations of K-unstable Fano manifolds. The authors thank Yuchen Liu for pointing out the $\ord_{\tH}$-degeneration, and thank Xiaohua Zhu for his suggestion on comparing the $\delta$-minimizer and the $\BH$-minimizer. The authors also thank Jiyuan Han, Chi Li and Feng Wang for helpful discussions. LW is partially supported by the NKRD Program of China (\#2025YFA1018100) and (\#2023YFA1010600). MM is partially supported by NSFC Grant 125B2003. The authors are partially supported by the NKRD Program of China (\#2020YFA0712800).

\section{Preliminaries}

We work over the field of complex numbers $\IC$. A {\it log Fano pair} $(X,\D)$ consists of a projective klt pair $(X,\D)$ with $-(K_X+\D)$ ample $\IQ$-Cartier. Let $l_0\in\IN$ be an integer such that $l_0(K_X+\D)$ is Cartier. We denote by $R=R(X,\D)=\oplus_{m\in l_0\IN} R_m$ the anti-canonical ring of $(X,\D)$, where $R_m=H^0(X,-m(K_X+\D))$. 

\subsection{$\IR$-test configurations} 
We briefly recall the notion of $\IR$-TC in this subsection, see for example \cite[Section 2.2]{HL20}. 
A {\it filtration} $\CF$ on the graded algebra $R = \oplus_m R_m$ is a collection of subspaces $\CF^\lam R_m \subseteq R_m$ for each $\lam \in \IR$ and $m \in \IN$ such that
\begin{itemize}
\item {\it Decreasing.} $\CF^\lam R_m \supseteq \CF^{\lam'}R_m $ for  $\lam \le \lam'$; 
\item {\it Left-continuous.} $\CF^\lam R_m=\CF^{\lam-\epsilon}R_m$ for $0<\epsilon \ll 1$; 
\item {\it Bounded.} $\CF^\lam R_m = R_m$ for $\lam \ll 0$ and $\CF^\lam R_m = 0$ for $\lam \gg 0$; 
\item {\it Multiplicative.} $\CF^\lam R_m \cdot \CF^{\lam'}R_{m'} \subseteq \CF^{\lam+\lam'}R_{m+m'}$. 
\end{itemize}
It is called {\it linearly bounded} if there is a constant $C>0$ such that $\CF^{mC}R_m=0$ and $\CF^{-mC}R_m=R_m$ for all $m\in\IN$. The graded linear series $\{\CF^{(t)}R_{\bu}\}$ is given by $(\CF^{(t)}R_{\bu})_m\coloneqq \CF^{mt}R_m$. We will always assume that filtration is linearly bounded in this paper. The {\it associated graded algebra} of $\CF$ is defined by $\gr_\CF R:= \oplus_m \gr_\CF R_m$ where $\gr_\CF R_m = \oplus_{\lam \in \IR} \CF^\lam R_m / \CF^{>\lam}R_m$. 

Let $a\in\IR_{>0}, b\in \IR$, we define the $a$-{\it rescaling} and $b$-{\it shift} of $\CF$ by
$$(a\CF)^\lam R_m := \CF^{\lam/a}R_m, \,\,
  \CF(b)^\lam R_m := \CF^{\lam-bm} R_m, $$
and we also denote by $a\CF(b):=(a\CF)(b)$, that is $\big(a\CF(b)\big)^\lam R_m = \CF^{\frac{\lam - bm}{a}} R_m$. 

For any valuation $v$ over $X$, we denote by 
$$\CF_v^\lam R_m=\{s\in R_m\mid v(s) \ge \lam\} $$
the corresponding filtration. It is linearly bounded if and only if $A_{X,\D}(v)<+\infty$, see \cite{JM12}. Note that the rescaling of a valuation and the corresponding filtration are compatible, that is, $\CF_{a\cdot v} = a\CF_v$. 

\begin{defi} \rm
A linearly bounded filtration $\CF$ on $R_\bu=R(X,\D)$ is called an $\IR$-{\it test configuration} ($\IR$-TC) of $(X,\D)$ if its associated graded algebra $\gr_\CF R$ is finitely generated. 
\end{defi}

The terminology comes from the following observations. The {\it Rees algebra} of $\CF$ is defined by 
$$\Rees_\CF R
:=\bigoplus_{m}\bigoplus_{\lam\in \Gamma_m} 
t^{-\lam} \cdot \CF^{\lam} R_m, $$
where $\Gamma_m=\Gamma_m(\CF)=\{\lam\in\IR\mid \CF^\lam R_m / \CF^{>\lam}R_m \ne 0\} \seq \IR$ is the subset of all real numbers $\lam$ that the filtration jumps. 
It's clear that $\gr_\CF R$ is finitely generated if and only if $\Rees_\CF R$ is finitely generated. In this case, the submonoid $\Gamma=\cup_m\Gamma_m\seq \IR$ is of rank $r<\infty$. We say that $\CF$ is of {\it rational rank} $r$. There exists a unique basis $\eta_1,\cdots, \eta_r \in \IR$ of the $\IQ$-vector space $\Gamma_\IQ\seq \IR$, such that 
$$\Gamma\seq \sum_{1\le i\le r}\IN \cdot \eta_i, \quad
\Gamma\nsubseteq\sum_{1\le i\le r}\IN \cdot a_i\eta_i, $$ 
for any $(1,\cdots, 1)\ne(a_1,\cdots, a_r) \in \IN^r$. Let $t_i:=t^{\eta_i}$, then we see that $\Rees_\CF R$ is a finitely generated $A=\IC[t_1, \cdots, t_r]$-algebra, and we get a morphism 
\begin{eqnarray}
\label{Morphism: multi-degeneration}
\pi: \CX=\Proj_A(\Rees_\CF R) \to \Spec A = \IA^r, 
\end{eqnarray}
with general fiber $\CX_{t}=X$ and special fiber $\CX_0=\Proj_\IC(\gr_\CF R)$. We denote by $\CL=\CO_{\CX/\IA^r}(1)$. Then $\CL|_{X\times\IG_m^r} \cong -\pr^*( K_X +\D)$. Note that the natural decomposition of $\Rees_\CF R$ is just the $\IT=\IG_m^r$-weight decomposition, which induces a $\IT=\IG_m^r$-action on $(\CX,\D_\CX)$ making $\pi$ equivariant. We shall define $\D_{\CX}$ to be the closure of $\D\times \IG_m^r$ in $\CX$. Then $(\CX,\D_\CX)$ is a $\IT=\IG_m^r$-equivariant family that degenerates $(X,\D)$ to $(\CX_0, \D_{\CX,0})$. 
This generalizes the notation of test configurations and the terminology of $\IR$-TC makes sense. 

Let $N=\IZ^r=\oplus_{1\le i\le r} \IZ \cdot e_i$, $M=N^\vee$ and $\eta=\sum_i \eta_i e_i \in N_\IR$. Write $\Rees_\CF R = \oplus_m \CR_m$, where $\CR_m=\oplus_{\lam\in \Gamma_m} t^{-\lam} \CF^\lam R_m$. Recall that any $\lam \in \Gamma_m$ is of the form $\lam=\sum_i \alpha_i \eta_i$ for some $\alpha_i\in \IN$. Then $\lam = \la \alpha, \eta\ra$ where $\alpha=\sum_i \alpha_i e^\vee_i \in M$. Hence we have $\IT=\IG_m^r$-weight decomposition 
\begin{eqnarray*}
\CR_m =\bigoplus_{\alpha \in M} \CR_{m, \alpha}, 
&\quad&
\CR_{m, \alpha} = t^{-\la \alpha, \eta\ra} \CF^{\la \alpha, \eta\ra} R_m, \\
\gr_\CF R_m =\bigoplus_{\alpha \in M} \gr_\CF R_{m, \alpha}, 
&\quad&
\gr_\CF R_{m, \alpha} =  
\CF^{\la \alpha, \eta\ra} R_m / \CF^{>\la \alpha, \eta\ra} R_m. 
\end{eqnarray*}
The vector field $\eta \in N_\IR$ determined by the $\IR$-TC $\CF$ induces a (product type) filtration $\CF'=\CF_{\wt_\eta}$ on $\Rees_\CF R$ and $\gr_\CF R$, 
\begin{eqnarray*}
\CF_{\wt_\eta}^\lam\CR_m =\bigoplus_{\la \alpha, \eta\ra \ge \lam} \CR_{m, \alpha}, 
\quad
\CF_{\wt_\eta}^\lam \, \gr_\CF R_m =\bigoplus_{\la \alpha, \eta\ra \ge \lam} \gr_\CF R_{m, \alpha}. 
\end{eqnarray*}
Hence the $\IR$-TC $\CF$ of $(X,\D)$ is also denoted by $(\CX,\D_\CX,\CL,\eta)$, and $\eta$ is called the {\it degeneration vector field} corresponding to $\CF$. 
If $\CF$ is replaced by a rescaling $a\CF$, then $(\CX,\D_\CX,\CL)$ are not changed, but the degeneration vector field $\eta$ will be replaced by $a\cdot \eta$. 

\begin{rmk}\rm
If $\CF=\CF_v$ is induced by some valuation $v$ over $X$, then the central fiber $\CX_0$ of the $\IR$-TC is integral. Indeed, this is equivalent that $\gr_{\CF} R$ is integral. For any non-zero $\bar{s}\in \gr_{\CF}^\lam R_m, \bar{s}'\in\gr_{\CF}^{\lam'} R_{m'}$, we see that their lifting $s,s'\in R$ satisfy $v(s)=\lam, v(s')=\lam'$ respectively, hence $v(ss')=\lam+\lam'$. So $ss' \notin \CF^{>\lam+\lam'}R_{m+m'}$, that is, $\overline{ss'}\ne 0$ in $\gr_\CF R$. 
\end{rmk}

\subsection{$\BH$-invariants} 

We recall the $\BH$-invariant introduced by \cite{TZZZ13} and reformulated by \cite{DS20,HL20} in this subsection, which is essential in the study of optimal degeneration of Fano manifolds. In the following, we assume that $(X,\D)$ is a log Fano pair, and $R=R(X,\D)$ is the associated anti-canonical ring.

\begin{defi}\rm
Let $\CF$ be a linearly bounded filtration on $R$ and $m\in l_0\IN$. For any $s\in R_m$, we set $\ord_\CF(s)=\max\{\lam\mid s\in\CF^{\lam}R_m\}$. A basis $\{s_i\}$ of $R_m$ is called {\it compatible} with $\CF$ if $\CF^\lam R_m$ is generated by $\{s_i\mid \ord_\CF(s_i)\ge \lam \}$ for any $\lam\in \IR$. For such a basis, it determines the set of real numbers $\Gamma_m=\{\lam_i^{(m)}=\ord_\CF(s_i)\}$ where the filtration jumps. The collection $\{\lam_i^{(m)}\}$ is not dependent on the choice of the compatible basis and is called the {\it successive minimum} of $\CF$. We denote by $\lam^{(m)}_\max$ and $\lam^{(m)}_\min$ the maximum and the minimum number, respectively. 
Then $\lam^{(m)}_\max = \max\{\lam\in \IR\mid \CF^{\lam}R_m \ne 0\}$. We denote by 
\begin{eqnarray*} 
\lam_\max 
= \lam_\max(\CF) 
:= \mathop{\sup}_{m\in \IN} \frac{\lam^{(m)}_\max}{m} 
= \mathop{\lim}_{m\rightarrow \infty} \frac{\lam^{(m)}_\max}{m}. 
\end{eqnarray*}
\end{defi}

\begin{defi}[Log canonical slopes] \rm
Let $L=-(K_X+\D)$. 
For any linearly bounded filtration $\CF$ on $R$, the {\it base ideal sequence} $I^{(t)}_\bu = \{I_{m,mt}\}_{m\in l_0\IN}$ of $\CF$ is defined by 
\begin{eqnarray*}
I_{m,mt}
&:=&{\rm im}
\Big(\CF^{mt}H^0(X,mL)\otimes \CO(-mL)\to \CO\Big), 
\end{eqnarray*}
for any $m\in l_0\IN$ and $t\in\IR$. The {\it log canonical slope} of $\CF$ is defined by
\begin{eqnarray*}
\mu(\CF)
&:=& \mu_{X,\D}(\CF)
\,\,\,:=\,\,\,
\sup\Big\{
t: \lct(X,\D;I^{(t)}_\bu)\ge 1
\Big\}. 
\end{eqnarray*}
Note that $I^{(t)}_\bu=0$ (hence $\lct(X,\D;I^{(t)}_\bu)=0$) when $t>\lam_\max$. We have $\mu(\CF)\le \lam_\max$. By \cite[Proposition 4.2]{XZ19}, we know that $\mu(\CF_v) = A_{X,\D}(v)$ for any weakly special valuation $v$ over $X$. 
\end{defi}

\begin{defi} \rm
Let $\CF$ be a linearly bounded filtration on $R=R(X,\D)$. 
We define 
\begin{eqnarray*} 
S_m(\CF) \,\,\,=\,\,\,
\sum_{\lam}\frac{\lam}{m} \cdot 
\frac{\dim \gr_\CF^{\lam} R_m}{\dim R_m}, \qquad
\tS_m(\CF) \,\,\,=\,\,\,
- \,\log\Big(
\sum_{\lam} e^{-\frac{\lam}{m}} \cdot 
\frac{\dim \gr_\CF^{\lam} R_m}{\dim R_m}
\Big). 
\end{eqnarray*} 
Their limits are the so-called $S$-{\it invariant} and $\tS$-{\it invariant}
\begin{eqnarray*} 
S(\CF) = \mathop{\lim}_{m\to \infty} S_m(\CF), \qquad
\tS(\CF) = \mathop{\lim}_{m\to \infty}\tS_m(\CF). 
\end{eqnarray*} 
\end{defi}

\begin{rmk} \rm
Using DH measures or Okounkov bodies, one can reformulate these invariants. Let $\DH_{\CF,m} = \frac{1}{N_m}\sum_\lam \delta_{\frac{\lam}{m}} N_{m}^{\lambda}$ be the discrete DH measures on $\IR$ determined by $\CF$, and $\DH_\CF$ be the associated weak limit \cite[Lemma 2.8]{BJ20}. Then we have 
\begin{eqnarray*} 
S_m(\CF) \,\,\,=\,\,\,
\int_\IR t \cdot \DH_{\CF,m}(\dif t), 
&\,\,&
\tS_m(\CF) \,\,\,=\,\,\,
-\,\log\Big(
\int_\IR e^{-t} \cdot \DH_{\CF,m}(\dif t)
\Big), \\
S(\CF) \,\,\,=\,\,\,
\int_\IR t \cdot \DH_{\CF}(\dif t), 
\,\,\,\,\,&\,\,& \,\,\,\,
\tS(\CF) \,\,\,=\,\,\,
-\,\log\Big(
\int_\IR e^{-t} \cdot \DH_{\CF}(\dif t)
\Big). 
\end{eqnarray*} 
\end{rmk}

\begin{defi}\rm
For any linearly bounded filtration $\CF$ on $R$, we define the $\BH$-{\it functional} as 
\begin{eqnarray*} 
\BH(\CF) \,\,\,:=\,\,\, \mu(\CF) - \tS(\CF). 
\end{eqnarray*} 
Then the $\BH$-{\it invariant} of the log Fano pair $(X,\D)$ is defined by 
\begin{eqnarray*} 
h(X,\D) \,\,\,:=\,\,\, \mathop{\inf}_\CF\BH(\CF), 
\end{eqnarray*} 
where the infimum runs over all the linearly bounded filtrations $\CF$ on $R$. 
\end{defi}

\begin{rmk}\rm
The original definition of $\BH$-functional is $\BH(\CF)=\BL(\CF)-\tS(\CF)$, where $\BL$ is the non-archimedean L-functional, see \cite{BHJ17}. It was proved in \cite[Theorem 3.55]{Xu23} that $\BL(\CF)=\mu(\CF)$ for any linearly bounded filtration $\CF$. Hence the above definition is equivalent to the original one. 
\end{rmk}

\begin{defi} \rm
The $\delta$-{\it invariant} of a log Fano pair $(X, \D)$ is defined by 
$$\delta(X,\D) \,\,\,=\,\,\, 
\inf_v \frac{A_{X,\D}(v)}{S(v)}, $$
where the infimum runs over all the valuations $v$ over $X$, and $S(v)=S(\CF_v)=S(-K_X-\D;\CF_v)$.

\subsection{Weighted stability threshold} \label{sec:weighted}
Let $(X, \D)$ be a log Fano pair with a $\IT$-action. Then there is a canonical lifting of the $\IT$-action on the anti-canonical ring $R_\bu=R(X,\D)$. We denote the cocharacter lattice by $N= \Hom(\IG_m,\IT)$ and the character lattice by $M=N^{\vee}$. The corresponding $\IR$-vector space is $N_{\IR}=N \otimes_{\IZ} \IR,\, M_{\IR}=M\otimes_{\IZ}\IR$. We have the following weight decomposition $R_m=\oplus_{\alpha\in M}R_{m,\alpha}$, where the weight space is given by
$$
R_{m,\alpha}=\{s\in R_m \mid \forall \,t\in \IT,\, t\cdot s = \alpha(t)s\}.
$$
Denote
\begin{eqnarray*}
\BP_m(R_\bu)
\coloneqq \{\alpha\in M\mid R_{m,\alpha}\ne0\}, \,
\BP(R_\bu)
\coloneqq\overline{\bigcup_{\it{m}} \frac{1}{\it{m}} \BP_{\it{m}}(R_\bu)}. 
\end{eqnarray*}
The closed convex body $\BP=\BP(R_\bu)\seq M_\IR$ is called the {\it moment polytope} of the $\IT$-action on $R=R_\bu$. We denote $N_{m,\alpha}=\dim R_{m,\alpha}$. And we define the following discrete measure on $M_{\IR}$ as
\begin{eqnarray*} 
\DH_{\BP,m}
&\coloneqq&\frac{1}{m^n}\sum_{\alpha\in \BP_m}  
N_{m,\alpha} \cdot
\delta_\frac{\alpha}{m}.
\end{eqnarray*}
By \cite[Definition-Theorem 2.31]{Xu23}, the $\DH_{\BP,m}$ weakly converges to a measure $\DH_{\BP}$ as $m\rightarrow +\infty$, which is called {\it $\IT$-equivariant Duistermaat-Heckman measure}. 
\begin{defi}
    We say $(X,\D,\xi_0)$ is a log Fano triple if $(X,\D)$ is a log Fano pair with a $\IT=(\IG_m)^r$-action and $\BP=\BP(R_{\bu})\subseteq M_{\IR}=\IR^r$ is the moment polytope of the $\IT$-action on $R_{\bu}$ and $\xi_0\in N_{\IR}=\IR^r$. We always assume the weight function is given by $g=g_{\xi_0}\colon \BP\rightarrow \IR_{>0}, \alpha \mapsto e^{-\langle \xi_0,\alpha \rangle}$. For simplicity, we denote $\Bv^g=\int_{\BP}g(\alpha)\DH_{\BP}(\dif \alpha)$.
\end{defi}
Fix $\xi\in N_{\IR}$ and $g=g_{\xi}$. We define the {\it Tian-Zhu invariant} as follows
$$\TZ_g(\eta)=\frac{1}{\Bv^g}
\int_\BP \la\alpha, \eta\ra g(\alpha) \DH_\BP(\dif \alpha), \,\,\eta\in N_\IR,$$
whose analytic definition first appeared in the work of \cite{TZ02} (see \cite{TZ25} for a more general setting). By \cite[Lemma 2.35]{HL23}, the $\BH$-functional restricted to the weight valuations $\BH: N_\IR \to \IR, \eta \mapsto \BH(\wt_{\xi+\eta})$ is a strictly convex function and admits a unique minimizer $\xi_0\in N_{\IR}$. The directional derivative of $\BH$-functional along $\eta$-direction is given by $\frac{\dif}{\dif t}\big{|}_{t=0} \BH(\wt_{\xi+t\eta})=-\TZ_{g_{\xi}}(\eta)$. Thus, for the critical point $\xi_0$, we have $\TZ_{g_{\xi_0}}(\eta)=0$ for all $\eta\in N_\IR$. The vector field $\xi_0$ is called the {\it soliton candidate} of $(X,\D)$ with respect to the $\IT$-action. In the following, we always assume the vector field $\xi_0$ appearing in the weight function $g=g_{\xi_0}$ is given by the soliton candidate.

\end{defi}
Let $\CF=\CF R_{\bu}$ be a filtration on $R$. We say $\CF$ is $\IT$-equivariant if $\CF^x R_{m}$ is a $\IT$-invariant subspace of $R_m$ for every $x\in \IR$. Then $(\CF^{x}R_{m})_{\alpha}=\{s\in \CF^{x}R_{m}\mid \forall \, t\in \IT, t\cdot s = \alpha(t)s\}=\CF^x R_m \cap R_{m,\alpha}$.  Following \cite[Section 5.6]{HL23}, we define the $g$-volume of graded linear series $\{\CF^{(t)}R_{\bu}\}$ as 
\begin{eqnarray*}
    \vol^g(\CF^{(t)}R_{\bu}) = \mathop{\limsup}_{m\rightarrow \infty} \sum_{\alpha \in \BP_m} g\left(\frac{\alpha}{m}\right)\frac{\dim \left((\CF^{mt}R_{m})_{\alpha}\right)}{m^n/n!}.
\end{eqnarray*}

In fact the limit exists and we define the Duistermaat-Heckman ($\DH$) measure 
\begin{eqnarray*}
    \DH_{\CF}^g(\dif t)= - \frac{1}{n!}\dif\,\vol^g(\CF^{(t)}R_{\bu})
\end{eqnarray*}
as a derivative distribution. We define the {\it $g$-weighted expected vanishing order} of $\CF$ by 
\begin{eqnarray*} 
S^g(R_\bu; \CF) 
&\coloneqq& 
\frac{1}{ \int_\IR \DH^g_\CF} \int_\IR t \cdot \DH^g_\CF(\dif t). 
\end{eqnarray*} 

Recall that a faithful valuation $\nv$ is adapted to the torus action if for any $f\in \IC(X_0)_{\alpha}$, we have $\nv(f)=(\alpha, {\nv}^{r+1}(f),\cdots,{\nv}^n(f))\in \IZ^r \times \IZ^{n-r}$ (see \cite[Definition 2.21]{HL23}). And an Okounkov body is called compatible with the $\IG_m$-action if the associated faithful valuation with respect to the admissible flag is adapted to the $\IG_m$-action. If $\BO\seq\IR^n$ is an Okounkov body of $-(K_X+\D)$ which is compatible with the $\IT$-action, then there is a linear projection $p:\IR^n\to M_\IR$ mapping $\BO$ onto $\BP$. For any $\IT$-invarant linearly bounded filtration $\CF$ on $R=R_\bu$, let $G^\CF$ be the concave transform of the filtration $\CF$. We have $\BO(\CF^{(t)}R_{\bu}) = \{x\in \BO(R_{\bu})\mid G^{\CF}(x)\geq t\}$.

If $\CF=\CF_E$ is induced by a toric divisor $E$, then $G^\CF$ is invariant on each fiber of $p$, hence descends to a function on $\BP$ (still denoted by $G^\CF$). Then, 
\begin{eqnarray} \label{eq:S^g}
S^g(R_\bu; \CF) 
&=& 
\frac{1}{\Bv^g} 
\int_\BP G^\CF 
\cdot g(\alpha) \DH_{\BP}(\dif \alpha). 
\end{eqnarray} 
We further define the weighted stability threshold, which first appeared in the work of \cite{RTZ21, BLXZ23}.

\begin{defi}
    Let $(X,\D,\xi)$ be a log Fano triple and for any $\IT$-invariant subvariety $Z\seq X$. We define the local weighted stability thresholds (resp. weighted stability thresholds) as 
\begin{eqnarray*}
\delta^g_{Z, \IT}(X,\D;R_\bu) 
= \mathop{\inf}_{v, C_X(v)\supseteq Z} \frac{A_{X,\D}(v)}{S^g(R_\bu; v)}, \, {\it resp.}\,\,\,\, \delta^g_{\IT}(X,\D;R_\bu) 
= \mathop{\inf}_{v\in \Val_X^{\IT,\circ}} \frac{A_{X,\D}(v)}{S^g(R_\bu; v)},
\end{eqnarray*}
where the infimum runs over all the valuations $v\in \Val^{\IT,\circ}_X$ satisfying $C_X(v)\supseteq Z$ (resp. the infimum runs over all the valuations $v\in \Val^{\IT,\circ}_X$).
\end{defi}
\begin{defi} \rm
\label{Definition: T-equivariantly weighted K-semistable}
A log Fano triple $(X,\D, \xi_0)$ with a $\IT$-action is called {\it weighted K-semistable} if $A_{X,\D}(v) - S^g(R_\bu;v) \ge 0$ for any $v\in\Val_{X}^\IT$, namely, it is equivalent to $\delta_{\IT}^g(X,\D;R_{\bu})\geq 1$. It is {\it weighted K-polystable} if it is weighted K-semistable, and $A_{X,\D}(v) - S^g(R_\bu;v) = 0$ implies that $v=\wt_\xi$ for some $\xi\in N_\IR$. 
\end{defi}
\begin{rmk}
    The original definition of weighted K-semistable(K-polystable) using test configuration (see for example \cite[Definition 2.42]{HL20}) is equivalent to Definition \ref{Definition: T-equivariantly weighted K-semistable} by the valuative criterion proved in \cite{HL23} and the result by \cite[Proposition 4.14]{BLXZ23}.
\end{rmk}
\subsection{Optimal degenerations}


By \cite[Theorem 3.7]{BLXZ23}, the $\BH$-functional is strictly convex along geodesics. Combining with \cite[Corollary 4.9]{HL20}, we see that the $\BH$-invariant admits a unique minimizer $v_0$, which is a quasi-monomial valuation over $X$. 

Furthermore, the associated graded algebra $\gr_{v_0}R$ is finitely generated by \cite[Corollary 5.7]{BLXZ23}. Hence the filtration $\CF=\CF_{v_0}$ induced by $v_0$ is an $\IR$-TC of $(X,\D)$, which leads to a special degeneration $(\CX,\D_\CX, \CL,\eta)$ of $(X,\D)$. The log Fano triple $(\CX_0,\D_{\CX,0},\eta_0)$ is weighted K-semistable by \cite[Theorem 5.3]{HL20}. We say that the special degeneration $(\CX_0,\D_{\CX,0},\eta_0)$ induced by the $\BH$-minimizer is the {\it optimal degeneration} of $(X,\D)$. 




\begin{thm}\cite[Theorem 5.3]{HL20}. 
\label{Theorem: H minimizer = central fiber K-ss}
Let $(X,\D)$ be a log Fano pair. Let $v_0$ be a quasi-monomial valuation over $X$ whose associated graded algebra is finitely generated, and $(\CX,\D_\CX,\CL,\eta_0)$ be the associated $\IR$-TC induced by $v_0$. Then
$v_0$ is the $\BH$-minimizer if and only if $(\CX_0, \D_{\CX, 0},\eta_0)$ is weighted K-semistable. 
\end{thm}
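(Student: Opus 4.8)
\emph{Proof proposal.} The plan is to reduce minimality of $v_0$ to a first-order (variational) criterion and then to identify that criterion with weighted K-semistability of the central fiber. The starting point is the strict convexity of $\BH$ along geodesics in the space of linearly bounded filtrations, together with the uniqueness of its minimizer, both recalled above. By convexity, $v_0$ is the global $\BH$-minimizer if and only if the one-sided directional derivative $\frac{d}{ds}\BH(\CF_s)\big|_{s=0^+}\ge 0$ for every geodesic ray $(\CF_s)_{s\ge 0}$ with $\CF_0=\CF_{v_0}$. So the whole argument reduces to (i) describing the relevant perturbation directions and (ii) computing this derivative and matching it to the weighted Futaki/Ding invariant of $(\CX_0,\D_{\CX,0},\eta_0)$.

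For (i), the key structural fact is that $v_0$ being quasi-monomial with finitely generated associated graded algebra gives $\gr_{\CF_{v_0}} R(X,\D)\cong R(\CX_0,\D_{\CX,0})$ as $\IT=\IG_m^r$-graded rings, with the soliton direction recorded by $\eta_0\in N_\IR$. Any $\IT$-equivariant linearly bounded filtration $\CG$ on $R(\CX_0,\D_{\CX,0})$ can then be lifted to a path $(\CF_s)_{s\ge 0}$ of filtrations on $R(X,\D)$ with $\CF_0=\CF_{v_0}$, by composing $\CG$ with the degeneration (a two-step $\Rees$-type construction): one arranges $\CF_s$ so that passing to $\gr_{\CF_{v_0}}$ recovers $\CG$ on the central fiber, and one checks each $\CF_s$ is linearly bounded. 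Conversely, the first-order behaviour at $s=0$ of an arbitrary perturbation of $\CF_{v_0}$ is governed entirely by a filtration of the central fiber, since passing to $\gr_{v_0}$ is precisely the degeneration along $v_0$. Thus the directions in (i) are exactly the $\IT$-equivariant filtrations $\CG$ of $(\CX_0,\D_{\CX,0})$.

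For (ii), I would compute the two pieces of $\BH(\CF_s)=\mu(\CF_s)-\tS(\CF_s)$ separately. The $\tS$-term is a log-Laplace transform of the Duistermaat–Heckman measure, $\tS(\CF_s)=-\log\int_\IR e^{-t}\,\DH_{\CF_s}(\dif t)$, so its first variation is smooth and elementary: differentiating in $s$ pairs the weight $e^{-t}$ against the variation of the measure, and after normalisation this is exactly the $g$-weighted $S$-invariant of $\CG$ on the central fiber with soliton weight $g(x)=e^{-x}$ and linearisation direction $\eta_0$. The $\mu$-term I would treat through $\mu(\CF)=\BL(\CF)$, the non-archimedean $L$-functional; its first variation along the lifted path contributes the corresponding weighted log-discrepancy ($A^{\eta_0}$-type) term. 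Assembling the two variations, $\frac{d}{ds}\BH(\CF_s)\big|_{s=0^+}$ equals a positive multiple of the $g$-weighted Futaki (equivalently weighted Ding) invariant of $\CG$ on $(\CX_0,\D_{\CX,0},\eta_0)$.

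Given (i) and (ii) both implications follow at once. If $v_0$ is the $\BH$-minimizer then every such derivative is $\ge 0$, hence every weighted Futaki invariant is $\ge 0$ and $(\CX_0,\D_{\CX,0},\eta_0)$ is weighted K-semistable; conversely, weighted K-semistability makes all one-sided derivatives at $\CF_{v_0}$ non-negative, and strict convexity of $\BH$ then forces $v_0$ to be the unique global minimizer. I expect the main obstacle to be the rigorous first variation of the $\mu$-term: unlike $\tS$, the log canonical slope is defined as a supremum of an $\lct$-condition over base ideal sequences and is not manifestly differentiable, so matching its derivative to the weighted $A^{\eta_0}$-term requires controlling the lct as the filtration moves (via the identification with $\BL$ and convexity/semicontinuity of these slope functionals), together with verifying that the lifted paths $\CF_s$ stay linearly bounded with $\DH_{\CF_s}$ varying continuously.
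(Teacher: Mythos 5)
Note first that the paper does not prove this statement at all: it is quoted verbatim as \cite[Theorem 5.3]{HL20}, so the only meaningful comparison is with Han--Li's original argument. Your outline does reproduce the broad skeleton of that argument: strict convexity of $\BH$ along geodesics (here imported from \cite[Theorem 3.7]{BLXZ23}) reduces minimality to a first-order condition, the relevant directions are obtained by lifting $\IT$-equivariant degenerations of $(\CX_0,\D_{\CX,0},\eta_0)$ to filtrations on $R(X,\D)$ through a two-step Rees-type construction, and the directional derivative along such a lifted ray is identified with a weighted Ding/Futaki invariant of the central fiber. That much is the correct shape.

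However, two load-bearing steps in your sketch are asserted rather than provable in the form you state them. First, the obstacle you flag yourself is genuine and your proposed remedy (differentiating $\mu=\BL$ along an arbitrary geodesic and ``matching'' the derivative to a weighted log-discrepancy term) is not how the argument can be closed: the log canonical slope is a supremum of an $\lct$ condition and admits no general first-variation formula. In the actual proof one only establishes a one-sided linear bound for the $\mu$-term along the specific rays lifted from \emph{special} test configurations of $\CX_0$, using valuative/$\lct$ computations that exploit that the central fiber is klt; no derivative identity holds for arbitrary geodesic directions. Second, your claim in (i) that every first-order perturbation of $\CF_{v_0}$ ``is governed entirely by'' a $\IT$-equivariant filtration on $\gr_{v_0}R$ is unjustified: an arbitrary linearly bounded filtration near $\CF_{v_0}$ need not interact with the degeneration at all. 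What rescues the reduction is finite generation of $\gr_{v_0}R$, which (via the equivalence of weighted K-semistability with weighted Ding-semistability tested on special $\IT$-equivariant test configurations) lets one restrict to exactly the liftable directions for the ``minimizer $\Rightarrow$ semistable'' implication, while the converse requires connecting $\CF_{v_0}$ to an \emph{arbitrary} filtration by a geodesic whose initial derivative is not of lifted form --- there Han--Li use approximation and semicontinuity of the slope functionals rather than the clean variational dichotomy you describe. So the proposal is the right skeleton, but the exhaustion of directions and the first variation of $\mu$ --- precisely the two steps carrying the weight of the theorem --- remain gaps.
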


\begin{rmk} \rm
Moreover, if $v_0$ is of rational rank one, that is, $v_0=a\cdot \ord_E$ for some $a\in \IR_{>0}$ and prime divisor $E$ over $X$, and $\ord_E$ induces a special degeneration $(\CX,\D_\CX,\CL,\eta)$ of $(X,\D)$. Then the $\IR$-TC induced by $v_0$ is $(\CX,\D_\CX,\CL,a\cdot\eta)$. 
\end{rmk}

\subsection{Weighted Abban-Zhuang Estimate}

We recall the criterion of weighted K-stability developed in \cite{MW23}, which is the key ingredient in finding optimal degenerations. For the notions of multi-graded linear series $V_\bu$ and its associated $S^g$-invariant, we refer the reader to \cite[Section 2.6 \& 4.1]{MW23}. 

Let $(X,\D)$ be a pair with a $\IT$-action, $V_{\bu}$ be a multi-graded linear series on $X$ admitting the $\IT$-action, and $F$ be a plt-type divisor over $X$. Assume that $F$ induces a $\IT$-invariant filtration $\CF_F$ on $V_\bu$. 
We denote by $W_{\bu}$ the refinement of $V_\bu$ by $F$. 
Let $\pi: Y \to X$ be the associated plt-type blowup. We denote by $\D_Y$ the strict transform of $\D$ on $Y$. Then we have 
\begin{eqnarray*}
K_Y+\D_Y+(1-A_{X,\D}(F))F=\pi^*(K_X+\D).
\end{eqnarray*}
We denote by $\D_F$ the difference of $\D_Y$ on $F$, that is
\begin{eqnarray*}
(K_Y+\D_Y+F)|_F=K_F+\D_F. 
\end{eqnarray*}

\begin{thm} \label{Theorem: weighted AZ}
Let $Z \subseteq X$ be a $\IT$-invariant subvariety contained in $C_X(F)$. Then we have 
\begin{eqnarray} \label{ineq.1}
\delta^{g}_{Z, \IT}(X,\D;V_\bu) \ge \min\Big\{\frac{A_{X,\D}(F)}{S^{g}(V_\bu; F)},\,\, \mathop{\inf}_{\pi(Z')=Z} \delta^{g}_{Z',\IT}(F,\D_F;W_\bu) \Big\}. 
\end{eqnarray}
\end{thm}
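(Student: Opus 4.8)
The plan is to prove the inequality valuation-by-valuation. By the definition of the weighted, equivariant stability threshold as an infimum of $A_{X,\D}(v)/S^{g}(V_\bu;v)$, it suffices to fix an arbitrary $\IT$-invariant valuation $v$ over $X$ with $Z\seq C_X(v)$ and to establish
\begin{eqnarray*}
A_{X,\D}(v)\ \ge\ \lambda\cdot S^{g}(V_\bu;v), \qquad \lambda:=\min\Big\{\tfrac{A_{X,\D}(F)}{S^{g}(V_\bu;F)},\ \delta^{g}_{Z,\IT}(F,\D_F;W_\bu)\Big\}.
\end{eqnarray*}
Passing to the plt-type blowup $\pi:Y\to X$, I would regard $v$ as a valuation over $Y$ and let $v_F$ denote the valuation it induces on $F$. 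From the two identities recorded before the statement, $K_Y+\D_Y+(1-A_{X,\D}(F))F=\pi^*(K_X+\D)$ and $(K_Y+\D_Y+F)|_F=K_F+\D_F$, adjunction for log discrepancies yields the additivity
\begin{eqnarray*}
A_{X,\D}(v)=A_{X,\D}(F)\cdot v(F)+A_{F,\D_F}(v_F),
\end{eqnarray*}
which is the first ingredient: it splits the left-hand side into an $F$-part and an $F$-intrinsic part.

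The second ingredient I would use is a decomposition of the weighted $S$-invariant along the refinement. Filtering $V_\bu$ by $\ord_F$ and passing to the associated graded realizes $W_\bu$ on $F$; at the level of weighted DH measures this should exhibit the weighted measure of $V_\bu$ as a Fubini-type integral, over the $F$-coordinate $x$, of the weighted measures of the graded pieces $W_{x,\bu}$. Since $F$ is $\IT$-invariant and $\CF_F$ is a $\IT$-invariant filtration, the $\IT$-weight grading—and hence the weight $g$—is preserved by the refinement, so the decomposition respects the weight. Integrating the contribution of $v$ against it gives a bound of the shape
\begin{eqnarray*}
S^{g}(V_\bu;v)\ \le\ v(F)\cdot S^{g}(V_\bu;F)+S^{g}(W_\bu;v_F),
\end{eqnarray*}
where the first term records the $x$-direction (measured by $S^{g}(V_\bu;F)$) and the second the fibrewise contribution of $v_F$ on the refined series $W_\bu$.

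Combining the two ingredients then finishes the proof. From $\lambda\le A_{X,\D}(F)/S^{g}(V_\bu;F)$ I get $\lambda\cdot S^{g}(V_\bu;F)\le A_{X,\D}(F)$, and from $\lambda\le\delta^{g}_{Z,\IT}(F,\D_F;W_\bu)$ together with the defining inequality $S^{g}(W_\bu;v_F)\le A_{F,\D_F}(v_F)/\delta^{g}_{Z,\IT}(F,\D_F;W_\bu)$ of the threshold on $F$ (valid because $v_F$ is $\IT$-invariant with center over $Z$) I get $\lambda\cdot S^{g}(W_\bu;v_F)\le A_{F,\D_F}(v_F)$. Multiplying the displayed $S$-bound by $\lambda$ and using these two estimates gives
\begin{eqnarray*}
\lambda\cdot S^{g}(V_\bu;v)\ \le\ A_{X,\D}(F)\cdot v(F)+A_{F,\D_F}(v_F)=A_{X,\D}(v),
\end{eqnarray*}
which is exactly what is required; taking the infimum over $v$ yields \eqref{ineq.1}.

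The hard part will be the weighted refinement decomposition in the second step. In the unweighted Abban–Zhuang setting this is the statement that the Okounkov body of $V_\bu$ fibers over the $F$-coordinate with fibers the Okounkov bodies of the $W_{x,\bu}$; here one must upgrade it to the weighted measures and verify that the weight $g$, a function of the $\IT$-moment coordinate alone, is constant along the refinement direction so that the Fubini splitting transports the weight through unchanged. A second subtlety is that a general quasi-monomial $v$ need not restrict to a single well-defined valuation $v_F$ on $F$; the honest argument replaces $v_F$ by the family of fibrewise valuations induced on the graded pieces and integrates, which is precisely where the constants in the $S^{g}$-decomposition must be pinned down. Once these measure-theoretic points are settled, the combinatorial combination in the last step is formal.
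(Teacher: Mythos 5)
Your proposal cannot be checked against a proof in this paper, because the paper states Theorem \ref{Theorem: weighted AZ} without proof: it is recalled verbatim from \cite[Theorem 4.6]{MW23}, and only applied here. That said, your outline correctly reconstructs the argument of the weighted Abban--Zhuang estimate as it is proved in \cite{MW23} (following \cite{AZ22}): (i) an adjunction-type splitting of the log discrepancy along $F$; (ii) a splitting of $S^g$ along the refinement, proved by choosing a basis of each graded piece of $V_\bu$ simultaneously compatible with the two filtrations $\CF_F$ and $\CF_v$, so that $S^g(V_\bu;v)\le v(F)\,S^g(V_\bu;F)+S^g(W_\bu;v_F)$; and (iii) the formal combination with the two defining inequalities for $\lambda$. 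Your closing remarks also correctly locate the two genuine technical points: the reduction to ($\IT$-invariant) quasi-monomial valuations adapted to a log smooth model containing $F$, so that $v_F$ is well defined, and the transport of the weight $g$ through the refinement, which in the weighted setting requires the compatible bases to be chosen $\IT$-homogeneously (possible precisely because $F$, $v$ and hence both filtrations respect the isotypic decomposition).

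One inaccuracy to flag: the log discrepancy relation you assert as an identity, $A_{X,\D}(v)=A_{X,\D}(F)\,v(F)+A_{F,\D_F}(v_F)$, is false in general. It holds for valuations monomial in coordinates adapted to $F$, but already for a quasi-monomial valuation in coordinates $(u,y)$ on $\IA^2$ with $u=x-y^2$, $F=\{x=0\}$, and weights $a>2b$, one gets $A(v)=a+b$ while $v(F)A(F)+A_{F}(v_F)=3b$, a strict inequality. What adjunction (more precisely, inversion of adjunction for the plt pair $(Y,\D_Y+F)$) gives is the inequality $A_{X,\D}(v)\ge A_{X,\D}(F)\,v(F)+A_{F,\D_F}(v_F)$, and since your final chain only uses this direction, the argument survives with that correction. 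With this fix, and the measure-theoretic points you yourself identify carried out as in \cite{AZ22,MW23}, the proposal is a sound reconstruction of the intended proof.
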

\begin{ex}\cite[Example 4.1]{MW23} \label{prop:SgW}
Let $V_{\bu}$ be a $\IT$-invariant $\IN\times \IN^l$-graded linear series on $X$. Let $\CF$ be a $\IT$-invariant linearly bounded filtration on $V_{\bu}$. Then,
\begin{eqnarray} 
\label{Formula: S^g 5}
S^g(V_\bu; \CF) 
&=&
\frac{1}{\Bv^g} 
\int_\BP S(V_{(1,\alpha,\bu)};\CF)
\DH^g_\BP(\dif \alpha). 
\end{eqnarray}
\end{ex}

\section{Optimal degenerations of Fano threefolds in family №2.23(a)} 
\label{Section: Optimal deg of 2.23(a)}
In this section, we will determine the optimal degenerations of Fano threefolds $X$ in the subfamily №2.23(a). By Theorem \ref{Theorem: H minimizer = central fiber K-ss}, it suffices to construct a special degeneration $(\CX, \xi_0)$ of $X$, such that the central fiber $(X_0,\xi_0)$ is weighted K-semistable. We achieve this (see Theorem \ref{Theorem: K-ps of optimal degeneration of №2.23(a)}) by a detail analysis of local weighted $\delta$-invariant via the weighted Abban-Zhuang estimate (see Section \ref{Section: compute S^g of №2.23(a)}). 
Our method actually works for possibly singular biconic curves $C\in |\CO(2,2)|$. We collect some facts on GIT stability of biconic curves in Section \ref{Subsection: singularities and GIT of biconic} and relate it to the weighted K-moduli space of $(X_0, \xi_0)$ in Section \ref{Subsection: Wmoduli2.23(a)}.

\subsection{The $\ord_{\tilde{H}}$-degeneration} \label{sec:H minimizer 2.23(a)}

Recall that any Fano threefold $X$ in family №2.23 is obtained by blowing up the quadric threefold $Q$ along a smooth elliptic curve $C$, which is a complete intersection of a hyperplane section $H\in |\CO_Q(1)|$ and a quadric section $Q'\in |\CO_Q(2)|$. The family №2.23 is divided into two subfamilies by smoothness of $H$, 
\begin{itemize}
    \item №2.23 (a), $H\cong \IP^1\times \IP^1$, 
    \item №2.23 (b), $H\cong \IP(1,1,2)$.
\end{itemize}

For any $X$ in №2.23(a), by choosing a suitable coordinate, we may assume that 
$$Q = \{u\cdot l_1+xy-zw = 0\}\seq \IP^4_{u,x,y,z,w}, \quad
H = \{u=0\}|_Q, \quad 
Q'= \{u\cdot l_2+ q = 0\}|_Q,  $$ 
where $l_i=l_i(u,x,y,z,w)$ are linear and $q=q(x,y,z,w)$ is quadric. Then $H = \{xy-zw = 0\} \subset \IP^3$ is a smooth quadric surface, and $C = Q'|_H = \{q=0\}|_H \seq H$ is a smooth curve. We identify $H$ with $\IP^1 \times \IP^1$ by the Segre embedding, 
$$\IP^1_{X,U} \times \IP^1_{Y,V} \to \IP^3_{x,y,z,w}, \quad
[X,U],[Y,V] \mapsto [XV, YU, UV, XY]. $$ 
Hence $q=q(XV, YU, UV, XY) \in H^0(\IP^1\times \IP^1, \CO(2,2))$, and $C\seq \IP^1\times \IP^1$ is a {\it biconic curve}. 

For any $X$ in №2.23(b), we assume by choosing coordinate that 
$$Q = \{u\cdot l_1 + xy-z^2 = 0\}\seq \IP^4_{u,x,y,z,w}, \quad
H = \{ u = 0 \}|_Q, \quad
Q' = \{u\cdot l_2 + q = 0\}|_Q, $$ 
where $l_1, l_2, q$ are the same as above. In this case $H=\{xy-z^2=0\}\seq \IP^3_{x,y,z,w}$ is a cone over the plane conic $\{xy-z^2=0\}\seq \IP^2_{y,z,w}$. We also identify $H$ with $\IP(1,1,2)$ via the embedding, 
$$\IP(1,1,2)_{X,Y,Z} \to \IP^3_{x,y,z,w}, \quad
[X,Y,Z]\mapsto [X^2, Y^2, XY, Z]. $$
Hence $q=q(Z, XY, X^2, Y^2) \in H^0(\IP(1,1,2), \CO(4))$, and and $C\seq \IP(1,1,2)$ is a quartic curve.

Consider the degeneration $\CX$ induced by the divisorial valuation $\ord_H$, which is given by the $\IG_m$-action $\lam(t)\cdot [u,x,y,z,w] = [t^{-1}u,x,y,z,w]$. Note that 
\begin{eqnarray*}
\lam(t)^*(u\cdot l_1+xy-zw)
&=&
tu \cdot \lam(t)^*l_1+xy-zw, \\
\lam(t)^*(u\cdot l_1+xy-z^2)
&=&
tu \cdot \lam(t)^*l_1+xy-z^2.
\end{eqnarray*}
Hence $Q_0 = \lim_{t\to 0}\lam(t)^* Q = \{xy-zw = 0\} \seq \IP^4_{u,x,y,z,w}$ for №2.23(a), and $Q_0=\{xy-z^2 = 0\} \seq \IP^4_{u,x,y,z,w}$ for №2.23(b). 
Also, note that $C$ lies in the fixed locus of the $\IG_m$-action. Hence $\lam(t)$ induces a special degeneration $(\CX,\xi)$ of $X = \Bl_C Q$ to $X_0:=\Bl_{C} Q_0$. Both $Q_0$ and $X_0$ admit the $\IG_m$-action. Let $\xi_0$ be the soliton candidate of $X_0$ with respect to the $\IG_m$-action. Then there exists $a_0>0$ such that $\xi_0=a_0 \xi$. 
\begin{thm}
For any Fano threefold $X$ in №2.23(a), the special degeneration $(\CX,\xi_0)$ is the optimal degeneration of $X$, where $\xi_0$ is the soliton candidate of $X_0=\CX_0$ with respect to the $\IG_m$-action. 
\end{thm}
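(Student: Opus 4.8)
The plan is to reduce the assertion to a single weighted K-stability statement and then verify it with the weighted Abban--Zhuang estimate. Since $v_0=a_0\cdot\ord_{\tH}$ is a divisorial (hence quasi-monomial) valuation whose associated $\IR$-TC is precisely $(\CX,\xi_0)$, Theorem~\ref{Theorem: H minimizer = central fiber K-ss} tells us that $v_0$ is the $\BH$-minimizer if and only if the central fibre $(X_0,\xi_0)=(\CX_0,\xi_0)$ is weighted K-semistable. Once this is known, the uniqueness of the $\BH$-minimizer recorded in Section~2.3 identifies $(\CX,\xi_0)$ with the optimal degeneration. Thus the whole theorem reduces to proving that $(X_0,\xi_0)$ is weighted K-semistable; in fact I would aim for the stronger statement that it is weighted K-polystable, which is the content of Theorem~\ref{Theorem: Intro, №2.23(a), (X_0, xi_0) K-polystable}.

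First I would pin down the geometry of the central fibre. Here $Q_0=\{xy-zw=0\}\seq\IP^4$ is the projective cone over $(\IP^1\times\IP^1,\CO(1,1))$ with vertex $o=[1,0,0,0,0]$, the section at infinity is $H\cong\IP^1\times\IP^1$, and $X_0=\Bl_C Q_0$ with $C=Q'|_H$ a smooth $(2,2)$-curve disjoint from $o$; hence $X_0$ carries the cone singularity at $o$, is otherwise smooth, and $\tH\cong H$ because $C$ is a Cartier divisor in $H$. The $\IG_m$-action scaling $u$ grades $R=\bigoplus_m H^0(X_0,-mK_{X_0})$ by $u$-weight, and I would compute the soliton candidate $\xi_0=a_0\xi$ as the unique vector in $N_\IR\cong\IR$ satisfying the barycentre condition that characterizes the soliton vector field, namely that the $\xi_0$-weighted first moment of the Duistermaat--Heckman measure $\DH_\CF$ of the $u$-grading vanishes. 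This is a one-variable optimization once $\DH_\CF$ is written down, and it fixes the constant $a_0>0$.

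The core of the argument is to show $\delta^g_{Z,\IT}(X_0)\ge 1$ for every $\IT$-invariant center $Z$, where $g=e^{\xi_0}$. I would apply the weighted Abban--Zhuang estimate of Theorem~\ref{Theorem: weighted AZ} with the plt-type divisor $F=\tH$: the first term $A_{X_0}(\tH)/S^g(V_\bu;\tH)$ is read off directly from the $u$-grading and, by the very choice of $\xi_0$ as the soliton candidate, comes out $\ge 1$, with the equality case reflecting that $\xi_0$ is the balanced degeneration direction; the second term $\delta^g_{Z,\IT}(\tH,\D_{\tH};W_\bu)$ is the refined invariant on $\tH\cong\IP^1\times\IP^1$. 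Refining once more along $C\seq\IP^1\times\IP^1$ and along its points, the estimate reduces the whole question to a weighted threshold on $\IP^1\times\IP^1$ whose value is governed, through the Hilbert--Mumford numerical criterion, by the GIT (semi)stability of $C$ as a $(2,2)$-divisor. Since $C$ is smooth (exactly the smoothness hypothesis on $X$), it is GIT-stable, the recursion terminates with all terms $\ge 1$, and we obtain weighted K-semistability; a separate examination of the equality and product-configuration cases then upgrades this to polystability.

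The main obstacle is this middle step: organizing the weighted invariants so that the Abban--Zhuang recursion is sharp. Because semistability is the borderline case, the inequality $\delta^g\ge 1$ must hold with equality along the balanced direction, so every $S^g$ and every log canonical slope has to be computed exactly rather than merely bounded, and the weight $g=e^{\xi_0}$ must be tracked carefully as $V_\bu$ is refined by $\tH$ and then by $C$. The delicate point is the passage to $\IP^1\times\IP^1$: translating the residual weighted threshold into the GIT numerical criterion for $(2,2)$-curves, and confirming that a smooth $C$ lies strictly inside the stable locus, is where the real content sits and where any slip in the Duistermaat--Heckman bookkeeping would be fatal.
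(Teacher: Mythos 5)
Your reduction scaffolding is the same as the paper's and is sound: Theorem \ref{Theorem: H minimizer = central fiber K-ss} plus uniqueness of the $\BH$-minimizer reduces everything to weighted K-semistability of $(X_0,\xi_0)$; your characterization of $\xi_0$ by the vanishing of the $g$-weighted first moment of the DH measure is equivalent to the paper's minimization of $\BH(\xi)=\log\int_{\BP}e^{-\xi s}\,\DH_\BP(\dif s)$; and your use of the weighted Abban--Zhuang estimate with $F=\tH$, where the choice of $\xi_0$ forces $S^g(R_\bu;\tH)=A_{X_0}(\tH)$, together with the polystability upgrade via the existence of a $\IT$-invariant quasi-monomial valuation computing equality (which descends to $\tH$ since $\Val^{\IT,\circ}_{X_0}\cong\Val^\circ_{\tH}\times N_\IR$), is exactly the route of Theorem \ref{Theorem: K-ps of optimal degeneration of №2.23(a)}.

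The genuine gap is your middle step: you propose to bound the residual threshold $\delta^g(\IP^1\times\IP^1;W_\bu)$ by appealing to the Hilbert--Mumford criterion and the GIT-stability of the smooth $(2,2)$-curve $C$. No such criterion is available at this point: there is no a priori mechanism translating GIT-stability of $C$ into a lower bound on the weighted stability threshold of the refined linear series $W_\bu$. The equivalence between weighted K-(semi/poly)stability of $(X_0,\xi_0)$ and GIT-(semi/poly)stability of $C$ is Theorem \ref{Theorem: moduli of optimal degeneration of 2.23(a)}, and in the paper it is a \emph{consequence} of, not an input to, the explicit computations of Section \ref{Section: compute S^g of №2.23(a)}: refinements by rulings, by the exceptional line of the ordinary blowup, and by the $(2,1)$-weighted blowup, with Zariski decompositions and volumes integrated against $g(s)=e^{-\xi_0 s}$, yielding the strict bound $\delta^g(\IP^1\times\IP^1;W_\bu)\approx 3/2.782066>1$ for smooth $C$ (Remarks \ref{Remark: 2.23(a) (1,1)-blowup} and \ref{Remark: 2.23(a) (2,1)-blowup}); the polystable boundary cases additionally need separate toric and complexity-one arguments. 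Invoking the GIT correspondence to prove weighted K-polystability of $(X_0,\xi_0)$ with $C$ smooth is therefore circular. Note also that your expectation that ``the inequality $\delta^g\ge 1$ must hold with equality along the balanced direction'' at every stage is not what happens: equality occurs only at $F=\tH$ itself, while all the refined thresholds on $\IP^1\times\IP^1$ are strictly greater than $1$ when $C$ is smooth, which is precisely what makes the contradiction argument for polystability work. To complete your proof you must replace the GIT appeal with these direct $S^g$ computations (or first prove the GIT correspondence, which itself requires them).
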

In the following of this subsection, we focus on the case of №2.23(a). We first compute the soliton candidate $\xi_0$ of $X_0$ with respect to the $\IG_m$-action with the same strategy of \cite[Section 6]{MW23}. We will choose a suitable admissible flag, then construct an Okounkov body of $X_0$ compatible with the $\IG_m$-action. Working locally, on the affine chart $U_z=\{z\ne 0\} \seq \IP^4_{u,x,y,z,w}$ (just assume that $z=1$), we have $Q_0 = \IA^3_{u,x,y}$ with $w=xy$. Hence, over $U_z$ we have $X_0 = \Bl_{C} \IA^3_{u,x,y}\seq \IA^3\times \IP^1_{\zeta_0, \zeta_1}$ given by $X_0= \{\zeta_0q=\zeta_1 u\}$. We further restrict ourselves to the affine chart $\{\zeta_1 \ne 0\}$. Then $u = s q$, where $s=\zeta_0/\zeta_1$. We have $X_0 = \IA^3_{s,x,y}, q=q(x,y,1,xy)$, and 
$$\pi: X_0 = \IA^3_{s,x,y} \to \IA^3_{u,x,y} = Q_0, \quad 
  (s,x,y) \mapsto (qs, x, y). $$ 
Then $E = \{q = 0\}$ is the exceptional divisor and  $\tH = \{s = 0\}$ is the strict transform of $H$. Note that $-K_{X_0} = \pi^*(-K_{Q_0})-E = \pi^*\mathcal{O}_{Q_0}(3)-E$. Hence
\begin{eqnarray*}
H^0(X_0,-K_{X_0}) 
&=& \Big(u\cdot \pi^*H^0(Q_0,\CO_{Q_0}(2))\oplus q\cdot \pi^* H^0(Q_0,\CO_{Q_0}(1))\Big)\cdot q^{-1}\\
&=& s\cdot \pi^*H^0(Q_0,\CO_{Q_0}(2))  \oplus \pi^*H^0(Q_0,\CO_{Q_0}(1))\\
&=& \la s^3 q^2 \ra \oplus s^2 q \cdot \la 1,x,y,xy \ra \oplus s\cdot \la 1,x,y,xy \ra^2 \oplus \la 1,x,y,xy \ra\\
&=& (R_1)_3 \oplus (R_1)_2 \oplus (R_1)_1 \oplus (R_1)_0,
\end{eqnarray*}
where $(R_1)_{\alpha}=\{s\in R_1=H^0(X_0, -K_{X_0})\mid \forall\, t\in \IG_m, \,t\cdot s = t^{\alpha}s\}$ is the weight space. This is clearly a decomposition of $\IG_m$-invariant subspaces. 
We make a remark that the anti-canonical ring $R=R(X_0)$ is generated by $R_1=H^0(X_0,-K_{X_0})$. 

Without loss of generality, we assume that $q= ax+by+ \text{higher terms}$, with $a,b\ne0$. Consider the admissible flag $\IA^3_{s,x,y}\supseteq \{s=0\}\supseteq \{s=x=0\}\supseteq \{s=x=y=0\}$, which induces a faithful valuation $\fv: s\mapsto (1,0,0), x\mapsto (0,1,0), y\mapsto (0,0,1)$. It is clear that the faithful valuation $\nv$ is adapted to the $\IG_m$-action from the construction. By assumption we have $\fv(q) = (0,0,1)$. 
Therefore $\fv(R_1)$ is given by
\begin{eqnarray*}
    s^3\cdot q^2 &\mapsto & (3,0,2);\\
    s^2\cdot q\cdot \la 1,x,y,xy\ra &\mapsto & (2,0,1), (2,1,1), (2,0,2), (2,1,2); \\
    s\cdot \la 1,x,y,xy \ra^2 &\mapsto & (1,0,0), (1,1,0),(1,0,1),(1,1,1),(1,2,0),(1,2,1), \\
    &&(1,0,2),(1,1,2),(1,2,2);\\
    \la 1,x,y,xy \ra &\mapsto & (0,0,0), (0,1,0), (0,0,1), (0,1,1).
\end{eqnarray*}
The convex hull of these points is an Okounkov body of $X_0$ since $R$ is generated by $R_1$. 
In order to make the Okounkov body compatible with the $\IG_m$-action, we may shift the first coordinate by $A_{X_0}(\tH)=1$ (see for example \cite[Section 2.4]{Wang24}), and get a polytope $\BO$ which lives over $s\in [-1, 2]$ (the moment polytope of the $\IG_m$-action). 
This Okounkov body $\BO$ is spanned by vertices 
$$(-1,0,0),(-1,0,1),(-1,1,0),(-1,1,1), 
(0,0,0),(0,0,2),(0,2,0),(0,2,2), 
(2,0,2). $$
Hence the $\IG_m$-equivariant Duistermaat-Heckman measure (which corresponds to the slice volumes of $\BO$)  on the moment polytope $\BP=[-1,2]$ is 
\begin{eqnarray*}
\DH_\BP(\dif s) = 
\left\{ \begin{array}{ll}
(2+s)^2 & -1\le s\le 0,\\
(2-s)^2 &  0\le s\le 2.
\end{array} \right. 
\end{eqnarray*} 
Note that here the $\DH$-measure in unnormalized and satisfies $\int_{\BP} \DH_{\BP}(\dif s)=\vol_{\IR^n}(\BO)$. Recall that $\BH(\wt_{\xi})=\log\big(\int_\BP e^{-\xi s} \cdot \DH_\BP(\dif s)\big)$ is a strictly convex functional on $N_{\IR}$, so it admits a unique minimizer $\xi_0$, which we call it the soliton candidate in Section \ref{sec:weighted}. Furthermore, the minimizer should satisfy the condition that the corresponding Tian-Zhu invariant vanishes, namely, $\nabla_{\xi} H(\xi_0)=-\TZ_{g_{\xi_0}}(\xi)=0$, which implies $\int_{-1}^0 s\cdot e^{-\xi_0 s}(2+s)^2\, \dif s +\int_0^2 s\cdot e^{-\xi_0 s}(2-s)^2 \, \dif s=0$. We can numerically solve the equation and obtain
\begin{eqnarray}
\label{Number: soliton candidate of №2.23(a)}
\xi_0 \approx  0.2737918510108124.
\end{eqnarray} 
Let $g=g_{\xi_0}: \BP \rightarrow \IR_{>0}$ be the weight function associated to the soliton candidate $\xi_0 \in N_{\IR}$. By Formula (\ref{eq:S^g}), we have $S^g(R_{\bu}; \tilde{H})=\frac{1}{\Bv^g}\int_{\BP} \alpha\cdot e^{-\langle \xi_0, \alpha \rangle} \DH_{\BP}(\dif \alpha) =1 = A_{X_0}(\tH)$.
\begin{thm}
\label{Theorem: K-ps of optimal degeneration of №2.23(a)}
The pair $(X_0, \xi_0)$ is weighted K-polystable. 
\end{thm}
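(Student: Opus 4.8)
The plan is to establish weighted K-polystability of $(X_0,\xi_0)$ by combining a lower bound on the weighted stability threshold $\delta^g$ with an analysis of the automorphism group, using the weighted Abban-Zhuang estimate (Theorem \ref{Theorem: weighted AZ}) as the central tool. First I would recall that $(X_0,\xi_0)$ is weighted K-polystable if $\delta^g_{\IT}(X_0) \ge 1$ together with a degeneration-to-product argument ruling out non-product test configurations; in practice the strategy is to show $\delta^g \ge 1$ everywhere and then treat the equality locus separately via the $\IG_m$-action and the residual symmetry. Since the soliton vector field $\xi_0$ and the DH-measure $\DH_\BP$ on $\BP=[-1,2]$ have already been computed, the weight function $g(s)=e^{-\xi_0 s}$ is explicit, so all weighted $S^g$-invariants reduce to one-dimensional integrals against $\DH_\BP$ that can be evaluated (numerically) in closed form.

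The key steps, in order, are as follows. First I would apply the weighted Abban-Zhuang inequality \eqref{ineq.1} with $F=\tH$ (the strict transform of $H$), refining the anti-canonical multi-graded linear series $V_\bu$ on $X_0$ by $\tH$. Over $\tH \cong \IP^1\times\IP^1$ this yields the estimate
\[
\delta^g_{Z,\IT}(X_0) \ge \min\Big\{\frac{A_{X_0}(\tH)}{S^g(V_\bu;\tH)},\,\, \delta^g_{Z,\IT}(\tH,\D_{\tH};W_\bu)\Big\},
\]
so the first subtask is to compute $A_{X_0}(\tH)$ and the weighted expected vanishing order $S^g(V_\bu;\tH)$ from the Okounkov-body data, verifying that the first ratio is at least $1$. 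The second subtask is to bound the refined threshold $\delta^g_{Z,\IT}(\tH,\D_{\tH};W_\bu)$ on the surface $\tH$; here I would run a further Abban-Zhuang refinement on $\IP^1\times\IP^1$ along fibers of the two rulings, reducing to curve computations where the $(2,2)$-divisor $C$ enters through $\D_{\tH}$. The surface computation is where the biconic curve $C\seq\IP^1\times\IP^1$ should interact with GIT stability, matching Theorem \ref{Theorem: Intro, moduli of optimal degeneration of 2.23(a)}; so I would stratify by $\IT$-invariant points and curves of $\tH$ and check each locus gives a ratio $\ge 1$.

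The main obstacle I expect is the equality/polystability refinement rather than the semistability bound. Getting $\delta^g \ge 1$ should follow cleanly from the two-step Abban-Zhuang estimate once the weighted integrals are assembled, but \emph{poly}stability requires ruling out special (non-product) degenerations attaining equality, equivalently showing that any $\IT$-equivariant optimal test configuration comes from a one-parameter subgroup of $\Aut(X_0,\xi_0)$. This means I must identify the reductive part of $\Aut(X_0)$ and check that the candidate destabilizing valuations at the equality locus are all induced by automorphisms; the delicate case is precisely when $C$ is merely GIT-polystable (not stable), where extra symmetry appears and the product test configurations must be matched against the GIT-polystable orbit. A secondary technical point is confirming that $\tH$ is genuinely a plt-type (weakly special) divisor on $X_0$ so that the refinement $W_\bu$ and the adjunction $(K_Y+\D_Y+\tH)|_{\tH}=K_{\tH}+\D_{\tH}$ are valid, and that the $\IT$-invariance hypotheses of Theorem \ref{Theorem: weighted AZ} hold for the chosen $F$.
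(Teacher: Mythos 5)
Your semistability argument is essentially the paper's: refine $R_\bu$ by $F=\tH$, note that $S^g(R_\bu;\tH)=A_{X_0}(\tH)$ \emph{exactly} (this is forced by the choice of the soliton candidate $\xi_0$, so the first ratio in the Abban--Zhuang minimum is $1$, not merely $\ge 1$), and then bound $\delta^g(\IP^1\times\IP^1;W_\bu)$ by surface computations. One caveat on the surface step: refining only along the rulings suffices for $p\notin C$ (the paper's Case (A)), but at points $p\in C$ the paper instead refines by the exceptional lines of the $(1,1)$- and $(2,1)$-blowups at $p$ (Cases (B) and (C)); a ruling through $p\in C$ picks up the base contribution $s(p_1+p_2)$ and gives an $S^g$-value dangerously close to $1$, so your ``fibers of the two rulings'' plan would at best be borderline there and is not what makes the strict inequality $\delta^g(W_\bu)\approx 3/2.782066>1$ come out cleanly.

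The genuine gap is in your polystability upgrade. You propose to rule out equality-attaining non-product degenerations by identifying the reductive part of $\Aut(X_0)$ and matching candidate destabilizing valuations against one-parameter subgroups, with special care ``when $C$ is merely GIT-polystable.'' Two problems: first, that worry is out of scope for this theorem --- here $X$ is a smooth member of №2.23(a), so $C$ is a smooth elliptic curve (GIT-stable), and the GIT-polystable degenerate curves only appear in the moduli statement (Theorem \ref{Theorem: moduli of optimal degeneration of 2.23(a)}), which is proved separately by toric and complexity-one arguments. Second, and more importantly, you never supply a mechanism for the equality analysis, which is the whole content of the polystability claim. The paper's mechanism is short: if $(X_0,\xi_0)$ were strictly weighted K-semistable, then by \cite[Lemma 4.14]{BLXZ23} there is a $\IG_m$-invariant quasi-monomial valuation $v$ with $A_{X_0}(v)=S^g(R_\bu;v)$; since every point of $\tH$ is fixed by the $\IG_m$-action, $\Val^{\IT,\circ}_{X_0}\cong \Val^\circ_{\tH}\times N_\IR(\IG_m)$, so $v$ comes from some $v_0\in\Val^\circ_{\tH}$, and then
\begin{eqnarray*}
1=\frac{A_{X_0}(v)}{S^g(R_\bu;v)}=\frac{A_{\tH}(v_0)}{S^g(W_\bu;v_0)}\ge \delta^g(\IP^1\times\IP^1;W_\bu)>1,
\end{eqnarray*}
a contradiction. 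In other words, polystability falls out of the \emph{strict} inequality on the refined surface threshold together with the structure of torus-invariant valuations; no computation of $\Aut(X_0)$ and no classification of optimal test configurations is needed. Without this lemma (or an equivalent), your plan as written does not close, since directly proving that every $\IT$-equivariant test configuration attaining equality is a product is precisely the statement to be established.
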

\begin{proof}
The argument is similar to that of \cite[Theorem 6.1]{MW23}. We sketch the proof for the reader's convenience. Let $W_\bu$ be the refinement of $R_\bu = R(X_0, -K_{X_0})$ by $\tH\cong \IP^1\times \IP^1$. By the choice of $g$ (or the soliton candidate) we see that $S^g(R_\bu; \tH)= A_{X_0}(\tH)$. Note that $X_0$ admits only one-dimensional torus action. By the weighted Abban-Zhuang estimate \cite[Theorem 4.6]{MW23}, we have 
$$\delta^g_{p,\IG_m}(X_0;R_\bu) \ge \min\{1, \delta^g_p(\IP^1\times \IP^1; W_\bu)\}. $$
By Remark \ref{Remark: 2.23(a) (1,1)-blowup} and \ref{Remark: 2.23(a) (2,1)-blowup} in Section \ref{Section: compute S^g of №2.23(a)}, we have $\delta^g(\IP^1\times \IP^1; W_\bu) \approx \frac{3}{2.782066} >1$. Then $(X_0,\xi_0)$ is weighted K-polystable with the same argument in \cite[Theorem 6.1]{MW23}. Indeed, if $(X_0,\xi_0)$ is strictly weighted K-semistable, then by \cite[Lemma 4.14]{BLXZ23}, there exists a $\IG_m$-invariant quasi-monomial valuation $v$ over $X_0$ such that $S^g(R_\bu; v)= A_{X_0}(v)$. Since every point of $\tH$ is invariant under the $\IG_m$-action, we see that $\Val^{\IT,\circ}_{X_0} \cong \Val^\circ_{\tH}\times N_\IR(\IG_m)$. Hence there exists $v_0\in \Val^\circ_{\tH}$ whose pull-back to $X_0$ is just $v$. Thus
$$1=\frac{A_{X_0}(v)}{S^g(R_\bu; v)} = \frac{A_{\tH}(v_0)}{S^g(W_\bu; v_0)} \ge \delta^g(\IP^1\times \IP^1; W_\bu) >1, $$
which is a contradiction. 
\end{proof}

\subsection{Singularities and GIT stability of biconic curves in $\IP^1\times\IP^1$}
\label{Subsection: singularities and GIT of biconic}
In the work of \cite{CDF+}, the singularities and GIT stability of biconic curves $C\in |\CO(2,2)|$ in $\IP^1 \times \IP^1$ have been classified. For the convenience of readers, we collect these results and fix some notation. Let $p\in C$ be a point. We may choose a coordinate such that $p=([0,1],[0,1])$. We work on the affine chart $\{U\ne 0, V\ne0\}\cong \IA^2_{X,Y}$ (just assume that $U=V=1$). Then $C=\{f=0\}$, where $f=\sum_{0\le i,j\le2} a_{ij}X^iY^j$ and $a_{00}=0$. 
Assume that $p\in C$ is singular in the following, which means that $a_{10}=a_{01}=0$. 

{\bf Case (I). }
We first assume that $C$ contains no ruling component passing through $p$, which is equivalent that $a_{20}$ and $a_{02}$ are both non-zero. In this case, the degree two part of $f$ is 
\begin{eqnarray*}
a_{20}X^2 + a_{11}XY + a_{02}Y^2
= a_{20}(X+b_1Y)(X+b_2Y), 
\end{eqnarray*}
where $b_1,b_2\ne 0$. Then there are three possibilities, 
\begin{itemize}
\item {\bf (Ia).} $b_1\ne b_2$. Then $p\in C$ is nodal; 
\item {\bf (Ib).} $b_1=b_2$ and $C$ is reduced. Then $p\in C$ is cuspidal; 
\item {\bf (Ic).} $b_1=b_2$ and $C$ is non-reduced. Hence $f=a_{20}(X+b_1Y)^2$. 
\end{itemize}

{\bf Case (II). } If $C$ contains a ruling component passing through $p$, we just assume that $a_{20}=0$. Then
\begin{eqnarray*}
f= X\cdot(a_{22}XY^2+ a_{21}XY+ a_{12}Y^2+ a_{20}X + a_{11}Y). 
\end{eqnarray*}
In this case, $l_1=\{X=0\}$ is a component of $C$. We see that $C=l_1+C'$ for some $C'\in |\CO(1,2)|$. 
There are four possibilities, 
\begin{itemize}
\item {\bf (IIa)}. $a_{11}\ne 0$. Then $l_1$ and $C'$ intersect transversally at $p\in C$. Hence $p\in C$ is nodal; 
\item {\bf (IIb)}. $a_{11}=0$ and $a_{12},a_{20}\ne 0$. Then $l_1$ is tangent to $C'$ at $p\in C$ of multiplicity two. Hence $p\in C$ is cuspidal; 
\item {\bf (IIc)}. $a_{11}=a_{20}=0$ and $a_{21}, a_{12}\ne0$. Then $f= XY(a_{22}XY+ a_{21}X+ a_{12}Y)$ and $p\in C$ is a triple point; 
\item {\bf (IId)}. $a_{11}=a_{12}=0$. Then $f= X^2(a_{22}Y^2+ a_{21}Y+ a_{20})$ and $p\in C$ is contained in a multiple ruling component (hence is at least a triple point). 
\end{itemize}

Next we recall the GIT-stability of $C\in|\CO(2,2)|$ is defined under the $\SL_2\times\SL_2$ action. For any $a,b\in \IZ$, we consider the one-parameter subgroup defined by 
$$
\lam(t)\cdot([X,U],[Y,V])=([t^{-a}X, t^aU],[t^{-b}Y, t^{-a}V]).
$$ 
Then for any biconic $f=\sum_{0\le i,j\le2} a_{ij}X^iY^jU^{2-i}V^{2-j}$ we have
\begin{eqnarray*}
\lam(t)^*f = 
\sum_{0\le i,j\le2} 
t^{(2i-2)a+(2j-2)b}a_{ij}X^iY^jU^{2-i}V^{2-j}. 
\end{eqnarray*}
Writing in matrices, we have 
\begin{eqnarray*}
\lam(t)\cdot
\left( \begin{array}{ccc}
a_{22} & a_{21} & a_{20} \\
a_{12} & a_{11} & a_{10}  \\
a_{02} & a_{01} & a_{00}   
\end{array} \right)
=
\left( \begin{array}{ccccc}
t^{2a+2b}a_{22} & t^{2b}a_{21} & t^{-2a+2b}a_{20} \\
t^{2a}a_{12} & a_{11} & t^{-2a}a_{10}  \\
t^{2a-2b}a_{02} & t^{-2b}a_{01} & t^{-2a-2b}a_{00}   
\end{array} \right). 
\end{eqnarray*}

\begin{thm}
\label{Theorem: GIT of biconic curves}
For any biconic curve $C\in|\CO(2,2)|$, we have

(a). it is GIT-stable if and only if it is smooth; 

(b). it is strictly GIT-semistable if and only if
\begin{itemize}
\item $C=l+C'$ where $l\in |\CO(1,0)|$ or $|\CO(0,1)|$, $C'$ is smooth and they intersect transversally, or
\item $C=l_1+l_2+C'$ where $l_1\in |\CO(1,0)|, l_2 \in |\CO(0,1)|$, and $l_1\cap l_2\cap C'=\varnothing$; 
\end{itemize}

(c). it is GIT-polystable if and only if 
\begin{itemize}
\item $C=l_1+l_1'+l_2+l_2'$ for different rulings $l_1,l_1'\in|\CO(1,0)|, l_2,l_2'\in|\CO(0,1)|$, or
\item $C=C_1+C_2$ for some smooth $C_1,C_2\in |\CO(1,1)|$ (may not be different). 
\end{itemize}
\end{thm}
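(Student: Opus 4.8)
The plan is to run the Hilbert--Mumford numerical criterion for $G=\SL_2\times\SL_2$ acting on $\IP\big(H^0(\IP^1\times\IP^1,\CO(2,2))\big)=\IP^8$, organized around the explicit weight computation in the matrix displayed above. Writing a biconic as its $3\times 3$ coefficient matrix $A=(a_{ij})$, I would first record the degree-three invariant $\det A$: since $\det(\mathrm{Sym}^2 g)=(\det g)^3=1$ for $g\in\SL_2$, the action reads $A\mapsto \rho(g_1)\,A\,\rho(g_2)^{\top}$ with $\rho=\mathrm{Sym}^2$, so $\det A$ is $G$-invariant, and every $C$ with $\det A\neq 0$ is automatically semistable. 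For the finer analysis I attach to the monomial $X^iY^jU^{2-i}V^{2-j}$ its torus weight $(u,v)=(i-1,j-1)\in\IZ^2$, so that for a one-parameter subgroup of direction $(\alpha,\beta)$ the Hilbert--Mumford weight is $-\min\{(i-1)\alpha+(j-1)\beta:a_{ij}\neq 0\}$. By the criterion together with conjugacy of maximal tori, $C$ is semistable (resp. stable) if and only if $0$ lies in (resp. in the interior of) the weight polytope $\mathrm{Conv}\{(i-1,j-1):a_{ij}\neq 0\}$ for every coordinate frame $g\cdot C$; equivalently $C$ is unstable precisely when some frame admits a direction making all present weights strictly positive.

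For part (a), the implication ``singular $\Rightarrow$ not stable'' is immediate: moving a singular point to $([0,1],[0,1])$ forces $a_{00}=a_{10}=a_{01}=0$, so the three quadratic weights $(1,-1),(0,0),(-1,1)$ all lie on the line $u+v=0$ while every surviving weight has $u+v\geq 0$; hence the polytope meets that line only along its boundary, the direction $(1,1)$ gives weight $0$, and $0$ is never interior. For the converse I would show a smooth $C$ cannot be pushed even to the boundary in any frame, by checking that the blocks of vanishing coefficients extracted below always produce a singular point.

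The core of the argument is the chamber enumeration. The lines $\alpha=0$, $\beta=0$, $\alpha=\pm\beta$ partition the direction space into finitely many chambers, and on each open chamber the condition ``all present weights positive'' forces a fixed block of the $a_{ij}$ to vanish. Translating these patterns through the singularity analysis of Cases (I)--(II) above, I would identify the unstable locus with the curves carrying a multiple ruling, a point of multiplicity $\geq 3$, or a ruling meeting its residual component non-transversally; the boundary rays of the chambers then cut out the strictly semistable strata, which I would match with the reducible configurations of (b). Throughout I would certify semistability off the unstable locus using $\det A$ and the further $\mathrm{SO}_3\times\mathrm{SO}_3$-invariants, rather than rechecking every frame by hand.

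Finally, for polystability (c) I would invoke the orbit-closure criterion: a semistable point is polystable iff for every one-parameter subgroup $\lambda$ of Hilbert--Mumford weight $0$ the limit $\lim_{t\to 0}\lambda(t)\cdot C$ again lies in $G\cdot C$. The torus-balanced models --- four distinct rulings (the monomial $XUYV$, weight $(0,0)$) and a union of two smooth $(1,1)$-curves (weights $(\pm1,\pm1),(0,0)$ on the diagonal) --- are the candidates for closed orbits, and I would verify closedness by computing these limits explicitly. I expect this last step to be the main obstacle: one must show that each strictly semistable curve degenerates, under its weight-$0$ subgroup, to one of the two balanced models, and that those two models have closed orbit. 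Concretely this amounts to computing the associated graded (the central fiber of the induced $\IR$-test configuration) for each boundary stratum and checking it is $G$-isomorphic to a four-ruling or two-$(1,1)$-curve configuration, namely the polystable representative of the $S$-equivalence class; confirming exhaustiveness of the chamber enumeration, and that no boundary stratum conceals a further closed orbit, is the most delicate bookkeeping.
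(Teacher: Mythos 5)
Your overall route --- the Hilbert--Mumford numerical criterion for $\SL_2\times\SL_2$ run through the $3\times 3$ weight matrix, organized by the chamber decomposition of the direction space and matched against the singularity classification of Cases (I)--(II) --- is exactly the paper's own (largely implicit) proof: the paper records only the one-parameter-subgroup weight computation and the singularity analysis, and states the theorem with the case-matching left to the reader. Your additions are sound refinements of that scheme: $\det A$ is indeed a $G$-invariant and certifies semistability off its vanishing locus, the wall set $\alpha=0$, $\beta=0$, $\alpha=\pm\beta$ is the correct chamber structure for the weight set $\{-1,0,1\}^2$, your part (a) argument is correct, and the orbit-closure criterion (polystable iff every weight-zero one-parameter subgroup limit stays in the orbit) is the standard and appropriate tool for (c), with the two balanced models you name being the right closed orbits.

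The genuine gap is in your treatment of (b), where you assert that ``the boundary rays of the chambers cut out the strictly semistable strata, which I would match with the reducible configurations of (b).'' This conflates two different conditions. The chamber and ray vanishing patterns (all present weights strictly positive in some direction) characterize the \emph{unstable} locus, and, as you correctly find, each such pattern forces a ruling component: a multiple ruling, a ruling tangent to its residual curve, or a triple point on two rulings (Cases (IIb), (IIc), (IId)). But the strictly semistable locus is ``semistable minus stable,'' i.e.\ all singular curves avoiding these patterns, and this is strictly larger than the two reducible families listed in (b): an irreducible nodal or cuspidal $(2,2)$-curve, or a union of two smooth $(1,1)$-curves meeting tangentially, contains no ruling at all, hence admits no destabilizing frame in any chamber, yet is not stable --- by your own part (a) argument the weight polytope at the singular point touches the origin along the line $u+v=0$. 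So a faithful execution of your matching step produces strictly semistable curves belonging to neither bullet of (b). These are precisely the curves with Case (I) singularities; under the weight-zero subgroup in direction $(1,1)$ their quadratic part survives and the limit is $(XV-c_1YU)(XV-c_2YU)$, i.e.\ the polystable model $C_1+C_2$ of part (c) (with $2C_1$ when $c_1=c_2$), which is what the paper's remark that Case (I) singularities are ``GIT-polystable'' records at the level of S-equivalence classes. Your plan must handle this stratum explicitly --- either by noting that the literal list in (b) omits it, or by proving (b) in the S-equivalence form that the paper actually uses downstream; as written, the ``matching'' step would fail for these curves. (A small slip elsewhere: the anti-diagonal weights of the two-conic model are $(\pm 1,\mp 1)$ and $(0,0)$, not $(\pm 1,\pm 1)$.)
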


\begin{rmk}\rm 
The singularities in {\bf Case (I)} are GIT-polystable, {\bf Case (IIa)} are strictly GIT-semistable, and {\bf Case (IIb)}, {\bf (IIc)} and {\bf (IId)} are GIT-unstable. The relationship between GIT-stability and singularity types is summarized as follows.

\begin{table}[htbp]
    \centering
    \renewcommand{\arraystretch}{1.2}
    \caption{Singularities and GIT Stability of Biconic Curves in $\mathbb{P}^1 \times \mathbb{P}^1$}
    \label{tab:biconic_git_stability}
    \begin{tabular}{clll}
        \hline
        \textbf{Case} & \textbf{Singularity Type at $p$} & \textbf{Geometric Condition} & \textbf{GIT Stability} \\
        \hline
        -- & Smooth & Smooth curve & GIT-stable \\
        (Ia) & Nodal & No ruling component through $p$ & GIT-polystable \\
        (Ib) & Cuspidal & No ruling component through $p$ & GIT-polystable \\
        (Ic) & Non-reduced & No ruling component through $p$ & GIT-polystable \\
        (IIa) & Nodal & Ruling component (transversal) & Strictly GIT-semistable \\
        (IIb) & Cuspidal & Ruling component (tangent) & GIT-unstable \\
        (IIc) & Triple point & Ruling component & GIT-unstable \\
        (IId) & $\ge$ Triple point & Multiple ruling component & GIT-unstable \\
        \hline
    \end{tabular}
\end{table}
\end{rmk}

\subsection{Computing $S^g$ for the degeneration $X_0$ of №2.23(a)}
\label{Section: compute S^g of №2.23(a)}
We work in a little more general setting, that is, $C\in |\CO(2,2)|$ (may not be smooth).  
Let $W_\bu$ be the refinement of $R_\bu=R(X_0, -K_{X_0})$ by the toric divisor $\tH = \{s = 0\} \cong \IP^1 \times \IP^1$, which is the strict transform of the hyperplane section $\{u=0\}$ of $Q_0$. Then 
\begin{eqnarray*}
W_{(1,s)} = 
\left\{ \begin{array}{ll}
H^0\Big(\IP^1\times \IP^1, \CO(2+s,2+s)\Big) 
& -1\le s < 0,\\
H^0\Big(\IP^1\times \IP^1, \CO(2-s,2-s)\Big) + s\cdot C
& 0\le s\le 2. 
\end{array} \right. 
\end{eqnarray*} 
where $C = H \cap E_C$. Let $p\in \IP^1\times \IP^1$ and $l_1\in |\CO(1,0)|, l_2\in |\CO(0,1)|$ be two rulings containing $p$.

{\bf Case (A). Refining by a ruling.} We first take refinement of $W_\bu$ by $l=l_1$. The filtration is 
\begin{eqnarray*}
\CF^{(t)}_l W_{(1,s)} = 
\left\{ \begin{array}{ll}
H^0\Big(\IP^1\times \IP^1, \CO(2+s-t,2+s)\Big) + t l
& -1\le s < 0, 0\le t\le 2+s,\\
H^0\Big(\IP^1\times \IP^1, \CO(2-s-t, 2-s)\Big) + t l+s C
& 0\le s\le 2, 0\le t\le 2-s. 
\end{array} \right. 
\end{eqnarray*} 

By Example \ref{prop:SgW}, we have 
\begin{eqnarray*} 
S^g(W_\bu; l) 
&=&\frac{1}{2!} 
\frac{1}{\Bv^g} 
\int_\IR
\int^2_{-1} g(s)
\vol(\CF^{(t)}W_{(1,s)}) 
 \dif s \dif t \\
&=& \frac{1}{2\Bv^g} 
\Big(
\int_{-1}^0 g(s) \int_0^{2+s} 2(2+s-t)(2+s) \dif t  \dif s \\
&&+ \int_0^2 g(s) \int_0^{2-s} 2(2-s-t)(2-s) \dif t  \dif s 
\Big) \\
&\approx& 0.782066 \,\, <1 = A_H(l), 
\end{eqnarray*}
where $g(s) = e^{-\xi_0 s}$ and $\Bv^g \approx 4.94383$.
Moreover, let $p_1,p_2\in C$ be the points that $l$ and $C$ intersect at ($p_1=p_2$ if $l$ is tangent to $C$). Then
\begin{eqnarray*}
W^l_{(1,s,t)} = 
\left\{ \begin{array}{ll}
H^0\Big(\IP^1, \CO(2+s)\Big) 
& -1\le s < 0, 0\le t\le 2+s,\\
H^0\Big(\IP^1, \CO(2-s)\Big) + s(p_1+p_2)
& 0\le s\le 2, 0\le t\le 2-s. 
\end{array} \right. 
\end{eqnarray*} 

Next we take refinement by $p\notin C$. We have 
\begin{eqnarray*}
\CF^{(x)}_p W^l_{(1,s,t)} = 
\left\{\begin{array}{ll}
H^0\Big(\IP^1, \CO(2+s-x)\Big) + x p 
& (s,t,x)\in \D_-,\\
H^0\Big(\IP^1, \CO(2-s-x)\Big) + x p+s(p_1+p_2)
& (s,t,x)\in \D_+. 
\end{array} \right. 
\end{eqnarray*} 
where
\begin{eqnarray*}
\D_-
&=&
\{-1\le s<0, 0\le t\le 2+s, 0\le x\le 2+s\},  \\
\D_+ 
&=&
\{0\le s\le 2, 0\le t\le 2-s, 0\le x\le 2-s\}. 
\end{eqnarray*} 
Hence 
\begin{eqnarray*} 
S^g(W^l_\bu; p) 
&=&\frac{1}{1!} 
\frac{1}{\Bv^g} 
\int^3_{-1} g(s)
\Big(
\int_{\D_-\cup \D_+}
\vol(\CF^{(x)}W_{(1,s,t)}) 
\dif x \dif t \Big)
 \dif s  \\
&=& \frac{1}{\Bv^g} 
\Big(
\int_{-1}^0 g(s) 
\int_0^{2+s}
\int_0^{2+s}
 (2+s-x) \dif x  \dif t  \dif s  \\
&&\quad 
+\int_0^2 g(s) 
\int_0^{2-s} 
\int_0^{2-s}
(2-s-x) \dif x  \dif t  \dif s 
\Big) \\
&\approx& 0.782066 \,\, <1 = A_l(p). 
\end{eqnarray*}

\begin{rmk}\rm
We conclude in {\bf Case (A)} that 
$\delta^g_p(\IP^1\times\IP^1; W_\bu) > 1 $ for any $C\in|\CO(2,2)|$ and $p\notin C$. 
\end{rmk}

{\bf Case (B). Refining by the exceptional line of $(1,1)$-blowup.} Assume that the point $p \in C$ is of multiplicity $1\le k \le 4$. Let $\nu: \Bl_p(\IP^1 \times \IP^1) \rightarrow \IP^1 \times \IP^1$ be the ordinary blowup of $p$ and we denote the exceptional line by $E \cong \IP^1$. Let $\tilde{l}_1, \tilde{l}_2, \tilde{C}$ be the strict transform of $l_1, l_2, C$ on the surface $\Bl_p(\IP^1 \times \IP^1)$ respectively. We have the pull-backs $l_i = \tilde{l}_i + E\, (i=1,2)$ and $C = \tilde{C}+ k E$. 
The negative and positive parts in the Zariski decomposition of $\CF_E^{(t)}W_{(1,s)}$ are
\begin{eqnarray*}
N(\CF_E^{(t)}W_{(1,s)})  = 
\left\{\begin{array}{ll}
t E
& -1\leq s \leq 0, 0\leq t \leq 2+s,\\
t E+(t-2-s)(\tilde{l}_1 + \tilde{l}_2) 
& -1\leq s \leq 0, 2+s\leq t \leq 4+2s,\\
k s E + s \tilde{C} 
& 0\leq s \leq 2, 0\leq t \leq k s,\\
t E + s \tilde{C} 
& 0\leq s \leq 2, k s\leq t \leq 2+(k-1)s,\\
t E+ s\tilde{C}
&\\
+(t-2-(k-1)s)\cdot (\tilde{l}_1 + \tilde{l}_2) 
& 0\leq s \leq 2, 2+(k-1)s\leq t \leq 4+(k-2)s, 
\end{array} \right.
\end{eqnarray*}

\begin{eqnarray*}
P(\CF_E^{(t)}W_{(1,s)})  = 
\left\{\begin{array}{ll}
\CO(2+s) - tE, \\
\CO(2+s) - tE - (t-2-s)(\tilde{l}_1+\tilde{l}_2), \\
\CO(2-s), \\
\CO(2-s) - (t - ks)E, \\
\CO(2-s) - (t - ks)E - (t - 2 - (k-1)s)(\tilde{l}_1+\tilde{l}_2). 
\end{array} \right. 
\end{eqnarray*}
And the volume follows 
\begin{eqnarray*}
\vol(\CF_E^{(t)}W_{(1,s)})  = 
\left\{\begin{array}{ll}
2(2+s)^2 - t^2, \\
2(2+s)^2 - t^2 + 2(t-2-s)^2, \\
2(2-s)^2, \\
2(2-s)^2 - (t - ks)^2, \\
2(2-s)^2 - (t - ks)^2 + 2(t - 2 - (k-1)s)^2. 
\end{array} \right. 
\end{eqnarray*}
Therefore 
\begin{eqnarray*} 
S^g(W_\bu; E) 
&=&\frac{1}{2!} 
\frac{1}{\Bv^g} 
\int_\IR
\int^2_{-1} g(s)
\vol(\CF^{(t)}W_{(1,s)}) 
 \dif s \dif t \\
&=& k + (2-k)q, 
\end{eqnarray*}
where $q\approx 0.782066$ and $1\le k\le 4$. Hence 
$$\frac{A_{\IP^1\times \IP^1}(E)}{S^g(W_\bu; E)} >, =, < 1 \text{ if } k=1,k=2, k\ge 3 \text{ respecitively}. $$
In particular, if $C$ has triple or quadruple points, then $(X_0,\xi_0)$ is weighted K-unstable. 

Next, we take refinement of $W_\bu$ by $E$. Assume that $l_1, l_2$ are not tangent to any germ of $C$ at $p$. 

If $k=1$, we denote by $p_1 = \tilde{l}_1 \cap E, p_2 = \tilde{l}_2 \cap E, p_3 = \tilde{C}\cap E$, which are different by assumption. Then
\begin{eqnarray*}
W^E_{(1,s,t)}  = 
\left\{\begin{array}{ll}
H^0(\IP^1,\CO(t))
& -1\leq s \leq 0, 0\leq t \leq 2+s,\\
H^0(\IP^1, \CO(4+2s-t)) + (t-2-s)(p_1+p_2)
& -1\leq s \leq 0, 2+s\leq t \leq 4+2s,\\
H^0(\IP^1, \CO(t-s)) + s p_3 
& 0\leq s \leq 2, s\leq t \leq 2,\\
H^0(\IP^1, \CO(4-s-t)) + s p_3+ (t-2)(p_1+p_2)
& 0\leq s \leq 2, 2\leq t \leq 4-s.
\end{array} \right.  
\end{eqnarray*}
Hence we have 
$$
S^g(W^E_{\bu};p)\approx 0.521377, \,\,
S^g(W^E_{\bu}; p_3) \approx 0.739311, \,\,
S^g(W^E_{\bu}; p_1) 
=S^g(W^E_{\bu}; p_2) \approx 0.782066 \,\,$$
for any $p \in E\setminus \{p_1, p_2, p_3\}$.  

If $k=2$, recall that $C_1, C_2$ are the two germs of $C$ at $p$, and let $p_3=\tilde{C}_1\cap E, p_4=\tilde{C}_2\cap E$. The series $W^E_{(1,s,t)}$ is the same as above for $-1\le s\le0$. We only present the series for $0\le s\le 2$.  
\begin{eqnarray*}
W^E_{(1,s,t)}  = 
\left\{\begin{array}{ll}
H^0(\IP^1, \CO(t-2s)) + s(p_3+p_4) 
& 2s\leq t \leq 2+s,\\
H^0(\IP^1, \CO(4-t)) + s(p_3+p_4)+ (t-2-s)(p_1+p_2)
& 2+s\leq t \leq 4.
\end{array} \right.  
\end{eqnarray*}
Hence if $p_3\ne p_4$, we have 
$$S^g(W^E_\bu; p_1)=S^g(W^E_\bu; p_2)\approx 0.782066, \,\,
 S^g(W^E_\bu; p_3)=S^g(W^E_\bu; p_4)\approx 0.739311. $$ 
Otherwise $p_3=p_4$, that is, $p\in C$ is a singular point as {\bf Case (Ib)} and {\bf Case (Ic)} in Section \ref{Subsection: singularities and GIT of biconic}, then 
$$S^g(W^E_\bu; p_3)\approx 0.957246.$$

\begin{rmk}\rm
\label{Remark: 2.23(a) (1,1)-blowup}
We conclude in {\bf Case (B)} that
\begin{itemize}
\item if $p\in C$ is smooth, and $l_1,l_2$ are not tangent to any germ of $C$ at $p$, then $$\delta^g_p(\IP^1\times\IP^1; W_\bu) \approx \frac{1}{0.782066} > 1; $$ 
\item if $p\in C$ is of multiplicity two, and $l_1,l_2$ are not tangent to any germ of $C$ at $p$, then $$\delta^g_p(\IP^1\times\IP^1; W_\bu) 
=\min\{1, \frac{1}{0.957246}\}= 1; $$ 
\item  if $p\in C$ is of multiplicity at least three, then $(X_0,\xi_0)$ is weighted K-unstable. 
\end{itemize}
\end{rmk}

{\bf Case (C). Refining by the exceptional line of $(2,1)$-blowup.} We consider the case that the ruling $l_1$ is tangent to one germ of $C$ at $p$. Let $k=\mult_p(l_1\cap C)$. Then $2\le k\le 4$. 
Let $E$ be the exceptional line of the $(2,1)$-blowup of $p$ such that $\ord_E(l_1)=2, \ord_E(l_2)=1$. We have $l_1 = \tilde{l}_1 + 2E, l_2 = \tilde{l}_2 + E, C=\tilde{C}+kE$. There are three cases
\begin{itemize}
\item if $k=2$, then $C=C_1$ is smooth at $p$;
\item if $k=3$, then $p\in C$ is nodal, $l_1$ is tangent to one germ $C_1$ of $C$ at $p$, and intersects the other germ $C_2$ transversally at $p$;
\item if $k=4$, then $p\in C$ is cuspidal, or $2l_1$ is a component of $C$, see {\bf Case (IIb)} and {\bf (IId)} in Section \ref{Subsection: singularities and GIT of biconic}. In this case, $l_1$ is tangent to both germs of $C$ at $p$. 
\end{itemize}

The intersection numbers of curves $\tilde{l}_1, \tilde{l}_2, E$ can be computed using weighted Segre class (see for example \cite[Lemma 3.19]{LX18}). We collect the results in the following tables
\begin{table}[!htp]
\begin{floatrow}
\begin{tabular}{|c|c|c|c|}
\hline
      & $l_1$ & $l_2$ & $E$  \\ \hline
$l_1$ & $0$     & $1$     & $0$  \\ \hline
$l_2$ & $1$     & $0$     & $0$  \\ \hline
$E$   & $0$     & $0$     & $-\frac{1}{2}$ \\ \hline
\end{tabular} \quad
\begin{tabular}{|c|c|c|c|}
\hline
              & $\tilde{l}_1$ & $\tilde{l}_2$   & $E$              \\ \hline
$\tilde{l}_1$ & -2            & 0               & 1              \\ \hline
$\tilde{l}_2$ & 0             & $-\frac{1}{2}$  & $\frac{1}{2}$  \\ \hline
$E$             & 1             & $\frac{1}{2}$   & $-\frac{1}{2}$ \\ \hline
\end{tabular} \,\,. 
\end{floatrow}
\end{table}

The negative part of $\CF_E^{(t)}W_{(1,s)}$ is as follows. For $-1\le s\le 0$,
\begin{eqnarray*}
N(\CF_E^{(t)}W_{(1,s)})  = 
\left\{\begin{array}{ll}
t E
& 0\leq t \leq 2+s,\\
t E+ \frac{1}{2}(t-2-s)\tilde{l}_1
& 2+s\leq t \leq 4+2s,\\
t E+ \frac{1}{2}(t-2-s)\tilde{l}_1 + (t-4-2s)\tilde{l}_2 
& 4+2s\leq t \leq 6+3s, 
\end{array} \right.
\end{eqnarray*}
and for $0\le s\le 2$,
\begin{eqnarray*}
N(\CF_E^{(t)}W_{(1,s)})  = 
\left\{\begin{array}{ll}
s\tilde{C} + ks E
& 0\leq t \leq ks,\\
s\tilde{C} + t E
& ks \leq t \leq 2+(k-1)s,\\
s\tilde{C} + t E + \frac{1}{2}(t-2-(k-1)s)\tilde{l}_1
& 2+(k-1)s\leq t \leq 4+(k-2)s,\\
s\tilde{C} + t E + \frac{1}{2}(t-2-(k-1)s)\tilde{l}_1 &\\
+ (t-4-(k-2)s)\tilde{l}_2 
& 4+(k-2)s\leq t \leq 6+(k-3)s. 
\end{array} \right.
\end{eqnarray*}
Then we have the positive part, for $-1\le s\le 0$,
\begin{eqnarray*}
P(\CF_E^{(t)}W_{(1,s)})  = 
\left\{\begin{array}{ll}
\CO(2+s) - tE, \\
\CO(2+s) - tE - \frac{1}{2}(t-2-s)\tilde{l}_1,\\
\CO(2+s) - tE - \frac{1}{2}(t-2-s)\tilde{l}_1 - (t-4-2s)\tilde{l}_2, 
\end{array} \right.
\end{eqnarray*}
and for $0\le s\le 2$,
\begin{eqnarray*}
P(\CF_E^{(t)}W_{(1,s)})  = 
\left\{\begin{array}{ll}
\CO(2-s), \\
\CO(2-s) - (t-ks) E,\\
\CO(2-s) - (t-ks) E - \frac{1}{2}(t-2-(k-1)s)\tilde{l}_1,\\
\CO(2-s) - (t-ks) E - \frac{1}{2}(t-2-(k-1)s)\tilde{l}_1 - (t-4-(k-2)s)\tilde{l}_2. 
\end{array} \right.
\end{eqnarray*}
Hence we have the volume, for $-1\le s\le 0$
\begin{eqnarray*}
\vol(\CF_E^{(t)}W_{(1,s)})  = 
\left\{\begin{array}{ll}
2(2+s)^2 - \frac{1}{2}t^2, \\
2(2+s)^2 - \frac{1}{2}t^2 + \frac{1}{2}(t-2-s)^2, \\
2(2+s)^2 - \frac{1}{2}t^2 + \frac{1}{2}(t-2-s)^2 + \frac{1}{2}(t-4-2s)^2,
\end{array} \right. 
\end{eqnarray*}
and for $0\le s\le 2$
\begin{eqnarray*}
\vol(\CF_E^{(t)}W_{(1,s)})  = 
\left\{\begin{array}{ll}
2(2-s)^2, \\
2(2-s)^2 - \frac{1}{2}(t - ks)^2, \\
2(2-s)^2 - \frac{1}{2}(t - ks)^2 + \frac{1}{2}(t - 2 - (k-1)s)^2, \\
2(2-s)^2 - \frac{1}{2}(t - ks)^2 + \frac{1}{2}(t - 2 - (k-1)s)^2 + \frac{1}{2}(t-4-(k-2)s)^2. 
\end{array} \right. 
\end{eqnarray*}
Therefore
\begin{eqnarray*} 
S^g(W_\bu; E) 
&=&\frac{1}{2!} 
\frac{1}{\Bv^g} 
\int_\IR
\int^2_{-1} g(s)
\vol(\CF^{(t)}W_{(1,s)}) 
 \dif s \dif t \\
&=& k+(3-k)q, 
\end{eqnarray*}
where $q\approx 0.782066$ and $k=2,3,4$. 
We see that 
$$\frac{A_{\IP^1\times \IP^1}(E)}{S^g(W_\bu; E)} >, =, < 1 \text{ if } k=2,3,4 \text{ respecitively}. $$
Hence $(X_0,\xi_0)$ is weighted K-unstable if $C$ has singularities in {\bf Case (IIb)} and {\bf (IId)} in Section \ref{Subsection: singularities and GIT of biconic}.

Next, we take refinement of $W_\bu$ by $E$ when $k=2$ or $3$. 

We denote by $p_1 = \tilde{l}_1 \cap E, p_2 = \tilde{l}_2 \cap E, p_3= \tilde{C_1}\cap E, p_4 = \tilde{C_2}\cap E$. Note that it is possible that $p_2=p_4$ if $l_2$ is tangent to $C_2$ at $p$. We denote the unique singular point of the $(2,1)$-blowup $\widetilde{\IP^1\times \IP^1}$ on $E$ by $p_0$. Hence for $-1\le s\le 0$, 
\begin{eqnarray*}
W^E_{(1,s,t)}  = 
\left\{\begin{array}{ll}
H^0(\IP^1,\CO(\frac{1}{2}t))
& 0\leq t \leq 2+s,\\
H^0(\IP^1, \CO(\frac{1}{2}(2+s))) + \frac{1}{2}(t-2-s)p_1
& 2+s\leq t \leq 4+2s,\\
H^0(\IP^1, \CO(\frac{1}{2}(6+3s-t))) &\\
+ \frac{1}{2}(t-2-s)p_1+ \frac{1}{2}(t-4-2s)p_2 
& 4+2s\leq t \leq 6+3s. 
\end{array} \right. 
\end{eqnarray*}
For $0\le s\le2$, we have
\begin{eqnarray*}
W^E_{(1,s,t)}  = 
\left\{\begin{array}{ll}
H^0(\IP^1, \CO(\frac{1}{2}(t-ks))) + s p_3 + \frac{1}{2}(k-2)sp_4  
& ks\leq t \leq 2+(k-1)s,\\
H^0(\IP^1, \CO(\frac{1}{2}(2-s))) + s p_3 + \frac{1}{2}(k-2)sp_4 &\\
+ \frac{1}{2}(t-2-(k-1)s)p_1
& 2+(k-1)s\leq t \leq 4+(k-2)s,\\
H^0(\IP^1, \CO(\frac{1}{2}(6+(k-3)s-t))) + s p_3 + \frac{1}{2}(k-2)sp_4 &\\
+ \frac{1}{2}(t-2-(k-1)s) p_1 +\frac{1}{2} (t-4-(k-2)s) p_2 
& 4+(k-2)s\leq t \leq 6+(k-3)s.
\end{array} \right.  
\end{eqnarray*}
Hence we have
$$S^g(W_{\bu}^E; p_0) \approx 0.325861, \,\,
S^g(W_{\bu}^E; p_1) \approx 0.782066, \,\,
S^g(W_{\bu}^E; p_2) \approx 0.391033, \,\,
S^g(W_{\bu}^E; p_3) \approx  0.543795. $$
If $k=3$, we have furthermore $S^g(W_{\bu}^E; p_4) \approx 0.434828$ when $p_4\ne p_2$, $S^g(W_{\bu}^E; p_4) = 0.5$ when $p_4=p_2$.  Since $\Diff_E(0) = \frac{1}{2}p_0$ and $p_0\notin \{p_1,p_2,p_3,p_4\}$, we have
\begin{eqnarray*} 
\delta^g(\IP^1, \frac{1}{2}p_0; W^E_{\bu}) 
=
\min\Big\{\frac{1-(1/2)}{S^g(W^E_\bu;p_0)}, 
\frac{1}{S^g(W^E_\bu;p_1)}\Big\} 
\approx \frac{1}{0.782066} >1. 
\end{eqnarray*} 
\begin{rmk}\rm
\label{Remark: 2.23(a) (2,1)-blowup}
We conclude in {\bf Case (C)} that 
\begin{itemize}
\item if $p\in C$ is smooth, and there exists a ruling through $p$ tangent to $C$ at $p$, then 
$$\delta^g_p(\IP^1\times\IP^1; W_\bu) \approx \frac{3}{2.782066} > 1 ; $$
\item if $p\in C$ is nodal, and there exists a ruling through $p$ tangent to some germ of $C$ at $p$, then 
$$\delta^g_p(\IP^1\times\IP^1; W_\bu) 
= \min\{1,\frac{1}{0.782066}\} =1; $$
\item  if $C$ is the union of a ruling $l$ and a curve tangent to $l$, or $C$ contains a double ruling $2l$ ({\bf Case (IIb)} and {\bf (IId)} in Section \ref{Subsection: singularities and GIT of biconic}), then $(X_0,\xi_0)$ is weighted K-unstable. 
\end{itemize}
\end{rmk}

\subsection{The weighted K-moduli space of $(X_0,\xi_0)$}\label{Subsection: Wmoduli2.23(a)}
By Theorem \ref{Theorem: K-ps of optimal degeneration of №2.23(a)} we see that $(X_0,\xi_0)$ is weighted K-polystable when $C$ is smooth. In this subsection, we discuss the case that $C\seq H$ is singular. We mainly focus on the cases that $A_{X_0}(E)/S^g(W_\bu;E)=1$ in the previous subsection. As a consequence, we prove that the weighted K-moduli space of $(X_0,\xi_0)$ is isomorphic to the GIT-moduli space of biconic curves $C\seq \IP^1\times\IP^1$. 

\begin{thm}
\label{Theorem: moduli of optimal degeneration of 2.23(a)}
Let $Q_0=C(\IP^1\times\IP^1, \CO(1,1))$ be a cone and $H\cong \IP^1\times \IP^1$ be the section at infinity. Let $X_0$ be the blowup of $Q_0$ along $C=Q'_0|_{H}$ where $Q'_0\in|\CO_{Q_0}(2)|$. Denote by $\xi_0$ the soliton candidate of $X_0$. 
Then $(X_0, \xi_0)$ is weighted K-semistable (weighted K-polystable) if and only if $C \seq H$ is GIT-semistable (GIT-stable or polystable) as a $(2,2)$-divisor in $\IP^1 \times \IP^1$. 
\end{thm}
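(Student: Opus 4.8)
The plan is to prove the two equivalences separately, establishing weighted K-semistability first and then upgrading to polystability, using the weighted Abban--Zhuang estimate (Theorem~\ref{Theorem: weighted AZ}) together with the pointwise computations of Section~\ref{Section: compute S^g of №2.23(a)} as the main engine, and the classification in Theorem~\ref{Theorem: GIT of biconic curves} as the dictionary between the two sides. The guiding principle is that the places over $X_0$ which can saturate $A_{X_0}=S^g$ are exactly the divisors obtained by blowing up $\tH\cong\IP^1\times\IP^1$ at a singular point of $C$, and that these correspond precisely to the one-parameter subgroups of $\SL_2\times\SL_2$ destabilizing $C$ in the GIT sense.

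For the semistable equivalence I would argue as follows. If $C$ is GIT-semistable, then by Theorem~\ref{Theorem: GIT of biconic curves} together with the local classification of Subsection~\ref{Subsection: singularities and GIT of biconic}, $C$ carries at worst nodes (Cases (Ia) and (IIa)), and no cusp, triple point, quadruple point, or double ruling. Feeding this into the weighted Abban--Zhuang estimate
\begin{eqnarray*}
\delta^g_{p,\IG_m}(X_0;R_\bu)\ \ge\ \min\big\{1,\ \delta^g_p(\IP^1\times\IP^1;W_\bu)\big\},
\end{eqnarray*}
I would check $\delta^g_p\ge 1$ at every point $p$: Case (A) gives $>1$ for $p\notin C$, while Remarks~\ref{Remark: 2.23(a) (1,1)-blowup} and~\ref{Remark: 2.23(a) (2,1)-blowup} give $\ge 1$ at every point of $C$, with equality exactly at nodes. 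Hence $\delta^g(X_0;R_\bu)\ge 1$ and $(X_0,\xi_0)$ is weighted K-semistable. Conversely, if $C$ is GIT-unstable, the classification forces a cusp, a triple or quadruple point, or a double ruling, and in each case Section~\ref{Section: compute S^g of №2.23(a)} exhibits an exceptional divisor $E$ of a $(1,1)$- or $(2,1)$-blowup with $A_{X_0}(E)/S^g(W_\bu;E)<1$, yielding a destabilizing valuation over $X_0$ and hence weighted K-instability.

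To pass from semistability to polystability I would analyze the saturating valuations. If $(X_0,\xi_0)$ is strictly weighted K-semistable, then by \cite[Lemma 4.14]{BLXZ23} there is a $\IG_m$-invariant quasi-monomial valuation $v$ with $A_{X_0}(v)=S^g(R_\bu;v)$. Exactly as in the proof of Theorem~\ref{Theorem: K-ps of optimal degeneration of №2.23(a)}, the splitting $\Val^{\IT,\circ}_{X_0}\cong\Val^\circ_{\tH}\times N_\IR(\IG_m)$ lets me descend $v$ to a valuation over $\tH\cong\IP^1\times\IP^1$ that computes $\delta^g(W_\bu)=1$; by the computations this forces $v$ to be a rescaling of $\ord_E$ for the blowup $E$ of a node of $C$. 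Such an $E$, combined with the $\xi_0$-direction, induces a special degeneration of $X_0$ with central fiber $\Bl_{C_0}Q_0$, where $C_0$ is the flat limit of $C$ under the one-parameter subgroup $\lambda\subseteq\SL_2\times\SL_2$ attached to the node. When $C$ is strictly GIT-semistable the limit $C_0\ncong C$ is its distinct polystable representative, so the degeneration is non-trivial and $(X_0,\xi_0)$ is strictly weighted K-semistable; when $C$ is GIT-polystable, $\lambda$ lies in the stabilizer of $C$, hence extends to an automorphism of $X_0$ commuting with $\IG_m$, so every such $v$ is of product type and $(X_0,\xi_0)$ is weighted K-polystable (the smooth case being Theorem~\ref{Theorem: K-ps of optimal degeneration of №2.23(a)}).

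I expect the main obstacle to be this polystability step, and within it two delicate points. First, one must make the matching between the saturating valuations over $X_0$ and the destabilizing subgroups of $\SL_2\times\SL_2$ precise, identifying the flat limit $C_0$ on the $X_0$-side with the GIT limit of $C$, and, for the polystable configurations $C_1+C_2$ and $l_1+l_1'+l_2+l_2'$ with positive-dimensional stabilizer, verifying that the associated torus genuinely lifts to $\Aut(X_0)$ so the resulting degenerations are products rather than destabilizations. Second, the unibranch singularities whose tangent cone is not a ruling direction (Case~(Ib)) are not directly covered by the $(1,1)$- and $(2,1)$-blowups of Section~\ref{Section: compute S^g of №2.23(a)}, and one must run the analogous volume computation for a weighted blowup adapted to the cusp's tangent to confirm $A_{X_0}/S^g<1$ there. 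Ruling out that any higher rational rank quasi-monomial valuation, beyond those visible through the Abban--Zhuang refinement, can saturate the estimate is the remaining technical subtlety.
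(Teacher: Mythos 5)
Your dictionary between GIT of $C$ and the $S^g$-computations is wrong at a decisive point, and the error propagates through both directions of your argument. You assert that GIT-semistability of $C$ forces at worst nodes, and conversely that a GIT-unstable $C$ must have a cusp, a triple or quadruple point, or a double ruling, each detected by $A_{X_0}/S^g<1$. But by the paper's classification (Theorem \ref{Theorem: GIT of biconic curves} and the remark following it), all singularities of Case (I) --- nodes \emph{and} cusps whose tangent cone contains no ruling direction (Ib), as well as the non-reduced double $(1,1)$-curve (Ic) --- are GIT-(semi/poly)stable: the tangent-cone degeneration sends such a curve to $C_1+C_2$ with $C_1,C_2\in|\CO(1,1)|$ smooth, possibly equal, which is polystable by Theorem \ref{Theorem: GIT of biconic curves}(c); only the ruling-related configurations (IIb), (IIc), (IId) and points of multiplicity $\ge 3$ are unstable. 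The K-side computations match this exactly: in {\bf Case (B)} with $k=2$ and $p_3=p_4$ the paper finds $A_{\IP^1\times\IP^1}(E)/S^g(W_\bu;E)=1$ and $S^g(W^E_\bu;p_3)\approx 0.957246<1$, so the points of type (Ib)/(Ic) sit precisely on the semistable boundary ($\delta^g_p=1$), not below it. Consequently the extra weighted-blowup computation you propose at (Ib) cusps, intended ``to confirm $A_{X_0}/S^g<1$ there,'' cannot come out as you expect: if it did, a strictly GIT-semistable curve would yield a weighted K-unstable pair, contradicting the very statement you are proving. The same misclassification also breaks your semistable direction, since GIT-semistable curves need not be nodal (e.g.\ $C=2C_1$), so your case check is incomplete.

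Your route to polystability is also genuinely different from the paper's, and it leaves the hardest steps unproved --- as you yourself flag: ruling out higher-rank saturating quasi-monomial valuations, and verifying that the stabilizer tori of the polystable configurations lift to $\Aut(X_0)$. The paper sidesteps this entirely by reducing (as you do) strictly semistable $C$ to the polystable case, and then verifying polystability directly for the two models of Theorem \ref{Theorem: GIT of biconic curves}(c): when $C=l_1+l_1'+l_2+l_2'$ the threefold $X_0$ is toric, and one checks that the $\BH$-minimizer on the moment polytope is $\xi_0$, concluding by \cite[Theorem 5.1]{BLXZ23} --- note that your valuation-matching scheme says essentially nothing in this case, where many toric valuations saturate the bound; when $C=C_1+C_2$ the threefold has a $\IG_m^2$-action, and the weighted Abban--Zhuang estimate applied to the $(1,1)$-blowup at a point of $C_1\cap C_2$ gives $\delta^g_{\IG_m^2}(X_0;R_\bu)\ge\min\{1,1,\delta^g(E;W^E_\bu)\}$ with $\delta^g(E;W^E_\bu)>\frac{1}{0.95725}>1$, after which the invariant-valuation argument of Theorem \ref{Theorem: K-ps of optimal degeneration of №2.23(a)} (via \cite[Lemma 4.14]{BLXZ23} and the splitting of $\Val^{\IT,\circ}_{X_0}$) rules out strict semistability. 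To repair your proposal you would need to (i) correct the GIT dictionary as above, and (ii) either carry out your classification of saturating valuations in full, including the toric configuration, or replace that step by the paper's direct verification for the two polystable models.
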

\begin{proof}
We have shown that if $C$ is GIT-stable (that is, smooth), then $(X_0,\xi_0)$ is weighted K-polystable. If $C$ is GIT-unstable, then $(X_0,\xi_0)$ is weighted K-unstable by Remark \ref{Remark: 2.23(a) (1,1)-blowup} and \ref{Remark: 2.23(a) (2,1)-blowup}. If $C$ is strictly GIT-semistable, then there is a one-parameter subgroup $\IG_m\seq \SL_2\times\SL_2$ degenerating $C$ to a GIT-polystable curve. Hence it suffices to show that $(X_0,\xi_0)$ is weighted K-polystable if $C$ is GIT-polystable. 

By Theorem \ref{Theorem: GIT of biconic curves} (c), we see that $C=l_1+l_1'+l_2+l_2'$ for different rulings $l_1,l_1'\in|\CO(1,0)|, l_2,l_2'\in|\CO(0,1)|$, or $C=C_1+C_2$ for some smooth $C_1,C_2\in |\CO(1,1)|$. In the first case, $X_0=\Bl_C Q_0$ is toric. Then one may compute the minimizer of $\BH$ on the toric polytope of $X_0$ and see that it is just $\xi_0$. Thus $(X_0,\xi_0)$ is weighted K-polystable by \cite[Theorem 5.1]{BLXZ23}. In the second case, $X_0=\Bl_C Q_0$ is of complexity one, that is, it admits a two-dimensional torus action. By the second term of Remark \ref{Remark: 2.23(a) (1,1)-blowup}, more precisely, by {\bf Case (B)} of the $S^g$-computation, we see that 
$$\delta^g_{\IG_m^2}(X_0;R_\bu) \ge \min\{1,1,\delta^g(E;W^E_\bu)\} $$
using the weighted Abban-Zhuang estimate, where $\delta^g(E;W^E_\bu)> \frac{1}{0.95725} > 1$ and $E$ is the exceptional line of blowing up $\tH$ at a point of $C_1\cap C_2$. Now a similar argument as in Theorem \ref{Theorem: K-ps of optimal degeneration of №2.23(a)} will imply that $(X_0,\xi_0)$ is weighted K-polystable. 
\end{proof}

\subsection{The $\ord_{\tH}$-degeneration of $X$ in №2.23(b)}  \label{sec:№2.23b Hu degeneration} 
At the end of this section, we make a remark that, for any $a>0$, the divisorial valuation $a\cdot \ord_{\tH}$ does not minimize the $\BH$-invariant for $X$ in №2.23(b). Let $X_0$ be the special degeneration induced by $\ord_{\tH}$ with an inducing $\IG_m$-action and let $\xi_0$ be the soliton candidate with respect to the $\IG_m$-action. Then $X_0$ is a cone over $\IP(1,1,2)$. The singularities of $X_0$ are worse than those in the case of №2.23(a). We will see that $(X_0,\xi_0)$ is not weighted K-semistable. 

The refinement of $R_\bu=R(X_0)$ by $\tH\cong \IP(1,1,2)$ is
\begin{eqnarray*}
W_{(1,s)} = 
\left\{ \begin{array}{ll}
H^0\Big(\IP(1,1,2), \CO(2+s)\Big) 
& -1\le s < 0,\\
H^0\Big(\IP(1,1,2), \CO(2-s)\Big) + s\cdot C
& 0\le s\le 2, 
\end{array} \right.
\end{eqnarray*} 
where $\CO(1)=\CO_Q(1)|_H$. Hence the DH-measure on $\BP=[-1,2]$ is 
\begin{eqnarray*}
\DH_\BP(\dif s) = 
\left\{ \begin{array}{ll}
(2+s)^2 & -1\le s\le 0,\\
(2-s)^2 & 0\le s\le 2, 
\end{array} \right. 
\end{eqnarray*} 
which is totally the same with №2.23(a). Hence $X_0$ admits the same soliton candidate $\xi_0$ as (\ref{Number: soliton candidate of №2.23(a)}). 

Let $l\seq \IP(1,1,2)$ be a ruling of the cone. Then a straightforward computation shows that $S^g(W_\bu; \ord_l)\approx 1.04275 >1$. Pulling back $l$ to $X_0$ we get a prime divisor $D$. Then 
$S^g(R_\bu; \ord_D)=S^g(W_\bu; \ord_l) >1. $
Hence $(X_0,\xi_0)$ is weighted K-unstable.

\section{Optimal degenerations of Fano threefolds in family №2.23(b)}
\label{Section: Optimal deg of 2.23(b)}

We find the optimal degenerations of Fano threefolds $X$ in №2.23(b) in this section. Recall that $X=\Bl_C Q$ for the quadric threefold $Q$ and an elliptic curve $C=H\cap Q'$. We may assume that 
$$Q = \{uw + xy-z^2 = 0\}\seq \IP^4_{u,x,y,z,w}, \quad
H = \{ u = 0 \}|_Q, \quad
Q' = \{u\cdot l' + q = 0\}|_Q, $$ 
where $l'=l'(u,x,y,z,w)$ is linear and $q=q(x,y,z,w)$ is a quadric with $q(0,0,0,1)\ne 0$. Then $H=\{xy-z^2=0\} \seq \IP^3_{x,y,z,w}$ is the cone over conic. Let $o=[0,0,0,0,1]\in Q$ be the vertex of $H$. The above assumption on $q$ says that $o\notin C$. We will consider the degeneration of $C$ induced by the natural projection from $o$. More precisely, let $E_o$ be the exceptional divisor of the ordinary blowup of $X$ at $o$, that is, $\Bl_o X\to X$. Then 
$$\ord_{E_o}(u,x,y,z,w)=(2,1,1,1,0). $$
Hence the corresponding special degeneration $(\CX,\eta)$ of $X$ is induced by the $\IG_m$-action
$$\lam(t)\cdot[u,x,y,z,w]
=[t^{-2}u,t^{-1}x,t^{-1} y,t^{-1}z,w]. $$
Note that $Q$ and $H$ are invariant under this $\IG_m$-action, and 
$$\mathop{\lim}_{t\to 0} \lam(t)^*q(x,y,z,w)= \mathop{\lim}_{t\to 0}q(tx,ty,tz,w) = a_{44}\cdot w^2$$ 
for some $a_{44}\ne 0$. We see that the central fiber $X_0 = \Bl_{C_0} Q$ where $C_0=\{u=w^2=0\}|_Q$ is a non-reduced plane conic. We denote by $E_{C_0}$ the exceptional divisor. 

\subsection{Okounkov body and Zariski decomposition}

We are going to compute the refinement of $R_\bu=R(X_0,-K_{X_0})$ by $E_o$. Firstly, we construct an Okounkov body that is compatible with the $\IG_m$-action corresponding to $E_o$. Then we discuss the Zariski decomposition of $-K_{X_0}-tE_o$ which is more complicated than previous cases. Let $H_0=H-E_{C_0}$ be the strict transform of $H$ to $X_0$, and $\tH_0=H_0-E_o$ be the strict transform of $H_0$ to $\Bl_o X_0$. We denote by $\tH=H-E_o=\tH_0+E_{C_0}$ on $\Bl_o X_0$. 

Now we construct the Okounkov body. Recall that $R(X_0, -K_{X_0})$ is generated by 
$$H^0(X_0,-K_{X_0}) 
= u\cdot \la u,x,y,z,w \ra^2\oplus 
  w^2\cdot \la u,x,y,z,w \ra. $$
To get an Okounkov body compatible with the $\IG_m$-action corresponding to $E_o$, we need to take an admissible flag near $E_o$. Since $X_0=\Bl_{(u,w^2)}Q$, where $Q=\{uw+xy-z^2=0\}\seq \IP^4_{u,x,y,z,w}$. Restricting to the affine chart $U_w=\{w\ne0\}$ we have $X_0=Q=\IA^3_{x,y,z}$ with $u=z^2-xy$ and $\Bl_o X_0=\Bl_o \IA^3_{x,y,z} \seq \IA^3\times\IP^2_{\zeta_0,\zeta_1,\zeta_2}$ defined by $x:y:z=\zeta_0:\zeta_1:\zeta_2$. Further restricting to the affine chart $\{\zeta_0\ne 0\}$ and letting $y_1=\zeta_1/\zeta_0, z_1=\zeta_2/\zeta_0$, we get 
$$\tau: \Bl_o X_0 = \IA^3_{x,y_1,z_1} \to \IA^3_{x,y,z} = X_0, \quad (x,y_1,z_1) \mapsto (x, xy_1,xz_1). $$ 
Hence $E_o=\{x=0\}$ and $u=z^2-xy=x^2(z_1^2-y_1)$. We denote by $q=z_1^2-y_1$, then $\tH_0=\{q=0\}$. In this coordinate chart, we have 
\begin{eqnarray*}
H^0(X_0,-K_{X_0}) 
&=& u\cdot \la u,x,y,z,1 \ra^2\oplus 
           \la u,x,y,z,1 \ra\\
&=& x^2q\cdot \la x^2q,x,xy_1,xz_1,1 \ra^2\oplus 
              \la x^2q,x,xy_1,xz_1,1 \ra \\
&=& x^6q^3\oplus 
    x^5q^2 \cdot \la 1,y_1,z_1 \ra \oplus
    x^4q^2 \oplus x^4q \cdot \la 1,y_1,z_1 \ra^2 \oplus\\
& & x^3q \cdot \la 1,y_1,z_1 \ra \oplus
    x^2q \oplus
    x    \cdot \la 1,y_1,z_1 \ra \oplus
    1. 
\end{eqnarray*}
We choose the admissible flag $\IA^3_{x,y_1,z_1}\supseteq \{x=0\}\supseteq \{x=q=0\}\supseteq \{x=q=z_1=0\}$, which induces a faithful valuation $\fv: x\mapsto (1,0,0), q\mapsto (0,1,0), z_1\mapsto (0,0,1)$. We have $\fv(y_1)=\fv(z_1^2-q)=(0,0,2)$. Hence $\fv(H^0(X_0,-K_{X_0}))$ is given by 
\begin{eqnarray*}
x^6q^3 
&\mapsto& (6,3,0), \\
x^5q^2 \cdot \la 1,y_1,z_1 \ra  
&\mapsto& (5,2,0), (5,2,1), (5,2,2), \\
x^4q^2 \oplus x^4q \cdot \la 1,y_1,z_1 \ra^2  
&\mapsto& (4,2,0), (4,1,0), (4,1,0), (4,1,2), (4,1,3), (4,1,4), \\
x^3q \cdot \la 1,y_1,z_1 \ra 
&\mapsto& (3,1,0), (3,1,1), (3,1,2), \\
x^2q 
&\mapsto& (2,1,0), \\
x \cdot \la 1,y_1,z_1 \ra 
&\mapsto& (1,0,0), (1,0,1), (1,0,2),  \\
1
&\mapsto& (0,0,0).
\end{eqnarray*}
The convex hull of these points is an Okounkov body of $X_0$ since $R_\bu$ is generated by $H^0(X_0,-K_{X_0}) $. 
To make the Okounkov body compatible with the $\IG_m$-action, we may shift the first coordinate by $A_{X_0}(E_o)=3$, and get a polytope $\BO$ which lives in $-3\le x\le 3$. 
This Okounkov body $\BO$ is spanned by vertices 
$$(-3,0,0),(-2,0,0),(-2,0,2),(1,1,0),(1,1,4),(3,3,0). $$
Hence the Duistermaat-Heckman measure on $\BP=[-3,3]$ is 
\begin{eqnarray*}
\DH_\BP(\dif x) = 
\left\{ \begin{array}{ll}
\frac{1}{2}(3+x)^2 & -3\le x\le -2,\\
\frac{1}{18}(5+x)^2 & -2\le x\le 1,\\
\frac{1}{2}(3-x)^2 & 1\le x\le 3. 
\end{array} \right. 
\end{eqnarray*} 
Recall that $\BH(\eta)=\log\big(\int_\BP e^{-\eta x} \cdot \DH_\BP(\dif x)\big)$. Again, the minimizer $\eta_0$ should satisfies $\nabla_{\eta} H(\eta_0)=-\TZ_{g_{\eta_0}}(\eta)=0$, so we get $\int_{-3}^{-2} x\cdot e^{-\eta_0 x}\cdot\frac{1}{2}(3+x)^2\, \dif x+\int_{-2}^{1} x\cdot e^{-\eta_0 x}\cdot\frac{1}{18}(5+x)^2\, \dif x +\int_1^3 x\cdot e^{-\eta_0 x}\cdot\frac{1}{2}(3-x)^2 \, \dif x=0$. We can numerically solve the equation and obtain  $$\eta_0 \approx  0.15464273351085497. $$

In order to compute the refinement $W_\bu$ of $R_\bu$ by $E_o$, we firstly need to compute the Zariski decomposition of $-K_{X_0}-tE_o$ for $t\ge 0$. Motivated by the Okounkov body above, there are three cases of the decomposition, that is, $t$ belongs to $[0,1], [1,4]$ and $[4,6]$ respectively. Let $l\seq \tH_0$ be a line whose image in $H$ is a ruling through the vertex. 

When $0<t<1$, it's clear that $(-K_{X_0}-tE_o)$ is ample. Since $(-K_{X_0}-E_o)\cdot l=0$, we see that $(-K_{X_0}-E_o)$ is semi-ample. Furthermore, we have the following Zariski decomposition
$$-K_{X_0} - 4E_o = 2(\CO(1)-E_o) + \tH_0, $$
where $\CO(1)-E_o$ is semi-ample and $(\CO(1)-E_o)\cdot l = 0$. We conclude that for $1<t<4$, 
\begin{eqnarray*}
-K_{X_o}-tE_o 
&=& \frac{4-t}{3}\cdot(-K_{X_0}-E_o) 
  + \Big(1-\frac{4-t}{3}\Big)\cdot(-K_{X_0} - 4E_o) \\
&=& P_1(t) + \frac{1}{3}(t-1)\tH_0, 
\end{eqnarray*} 
where 
\begin{eqnarray*}
P_1(t)
&=& \frac{1}{3}(4-t)\cdot(-K_{X_0}-E_o) 
  + 2(1-\frac{1}{3}(4-t))\cdot(\CO(1)-E_o)\\
&=& \frac{1}{3}(10-t)\CO(1)
   -\frac{1}{3}(4-t)E_{C_0}
   -\frac{1}{3}(2+t)E_o
\end{eqnarray*}
is semi-ample. Finally, $-K_{X_0}-6E_o = \tH_0+2\tH$ is fixed. We have for $4<t<6$, 
\begin{eqnarray*}
-K_{X_0}-tE_o 
&=& \frac{6-t}{2}\cdot(-K_{X_0}-4E_o) 
  + \Big(1-\frac{6-t}{2}\Big)\cdot(-K_{X_0} - 6E_o) \\
&=& (6-t)(\CO(1)-E_o) + \tH_0+(t-4)\tH. 
\end{eqnarray*} 
Let $x=t-3$. Restricting to $E_o$, we get the $\IN^2$-graded linear series $W_\bu$, 
\begin{eqnarray*}
W_{(1,x)} = 
\left\{ \begin{array}{ll}
H^0\Big(\IP^2, \CO(3+x)\Big) 
& -3\le x < -2,\\
H^0\Big(\IP^2, \CO(\frac{1}{3}(5+x))\Big) + \frac{1}{3}(2+x)\cdot C_2
& -2\le x\le 1, \\
H^0\Big(\IP^2, \CO(3-x)\Big) + x\cdot C_2
& 1\le x\le 3, 
\end{array} \right. 
\end{eqnarray*} 
where $C_2=\tH|_{E_o}=\tH_0|_{E_o}$ is a smooth plane conic.

\subsection{Computing $S^g$ for the degeneration $X_0$ of №2.23(b)}\label{Section: compute S^g of №2.23(b)}
In this subsection, we show that $(X_0,\eta_0)$ is weighted K-polystable, hence it is the optimal degeneration of $X$ in №2.23(b). The proof is similar to the case in №2.23(a) when $C\seq \IP^1\times \IP^1$ is GIT-polystable. 

\begin{thm}
\label{Theorem: K-ps of optimal degeneration of №2.23(b)}
The pair $(X_0, \eta_0)$ is weighted K-polystable. 
\end{thm}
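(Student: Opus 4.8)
The plan is to run the weighted Abban--Zhuang machinery of Theorem~\ref{Theorem: weighted AZ} along the exceptional divisor $E_o\cong\IP^2$, in exact parallel with the proof of Theorem~\ref{Theorem: K-ps of optimal degeneration of №2.23(a)}. Since $o\notin C_0$, the point $o$ is a smooth point of $X_0$, so $E_o$ is an ordinary exceptional $\IP^2$ with $A_{X_0}(E_o)=3$ and $\Diff_{E_o}(0)=0$. The shift of the first Okounkov coordinate by $3$ was arranged precisely so that the $g$-weighted barycenter of $\DH_\BP$ sits at the origin; equivalently, the criticality of the soliton candidate $\eta_0$ gives
\[
S^g(R_\bu;E_o)=A_{X_0}(E_o)=3.
\]
Feeding $F=E_o$, $\D=0$ and $Z=C_{X_0}(E_o)=\{o\}$ into the weighted Abban--Zhuang inequality \eqref{ineq.1} then yields
\[
\delta^g_{o,\IT}(X_0;R_\bu)\ \ge\ \min\Big\{\frac{A_{X_0}(E_o)}{S^g(R_\bu;E_o)},\ \delta^g_{o,\IT}(E_o;W_\bu)\Big\}=\min\big\{1,\ \delta^g_{o,\IT}(E_o;W_\bu)\big\},
\]
so the whole problem is transported to the surface $E_o\cong\IP^2$ with the multi-graded series $W_\bu$ computed above, and everything reduces to showing $\delta^g(E_o;W_\bu)>1$.

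On $E_o=\IP(T_oX_0)=\IP^2_{x,y,z}$ the residual torus acts through the one-parameter subgroup $\sigma_1$ of weights $(1,-1,0)$, while the soliton direction $\eta_0$ (lying in the second generator $\sigma_2$, of weights $(1,1,1)$) acts trivially; the refinement conic $C_2=\{xy=z^2\}$ is $\sigma_1$-invariant. This is exactly the complexity-one surface picture of Section~\ref{Section: compute S^g of №2.23(a)}, so I would apply Theorem~\ref{Theorem: weighted AZ} a second time: refine $W_\bu$ by a $\sigma_1$-invariant line $l\seq\IP^2$ and then by a $\sigma_1$-invariant point, and run the resulting integrals against $g=e^{-\eta_0 x}$ using the three-regime Zariski decomposition of $-K_{X_0}-tE_o$ recorded above. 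I expect every $S^g$ appearing in the flag to come out strictly below $1$, giving $\delta^g(E_o;W_\bu)>1$; the only configurations that could a priori force equality are the two lines tangent to $C_2$ and the lines through its two $\sigma_1$-fixed points, which I would dispatch separately with weighted $(2,1)$-type blowups exactly as in Remarks~\ref{Remark: 2.23(a) (1,1)-blowup} and \ref{Remark: 2.23(a) (2,1)-blowup}.

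Combining the two estimates gives $\delta^g_{\IT}(X_0;R_\bu)\ge 1$, i.e.\ weighted K-semistability, the bound being capped at $1$ by the $E_o$-term lying in the torus direction. To upgrade to polystability I would argue by contradiction as at the end of the proof of Theorem~\ref{Theorem: K-ps of optimal degeneration of №2.23(a)}: if $(X_0,\eta_0)$ were strictly weighted K-semistable, then by \cite[Lemma 4.14]{BLXZ23} there would be a $\IT$-invariant quasi-monomial valuation $v\notin N_\IR(\IT)$ with $A_{X_0}(v)=S^g(R_\bu;v)$. Since $\sigma_2$ (hence $\eta_0$) acts trivially on $E_o$, the $\IT$-invariant valuations compatible with the $E_o$-refinement split off the $E_o$-direction in the expected product fashion $\Val^{\IT,\circ}_{X_0}\cong\Val^{\sigma_1,\circ}_{E_o}\times N_\IR(\sigma_2)$, and the strict inequality $\delta^g(E_o;W_\bu)>1$ then forces the equality to be realized only along $E_o$, i.e.\ $v\in N_\IR(\IT)$ -- a contradiction. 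Hence $(X_0,\eta_0)$ is weighted K-polystable.

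The main obstacle is the second step: organizing the Zariski decomposition of $-K_{X_0}-tE_o$ (the three regimes $t\in[0,1],[1,4],[4,6]$) and of its further refinements on $\IP^2$, and then verifying by explicit integration that all resulting $S^g$-values remain $<1$ -- in particular checking that the finitely many flags adapted to $C_2$ (tangent lines, lines through the $\sigma_1$-fixed points) do not pull $\delta^g(E_o;W_\bu)$ down to $1$. A secondary point requiring care is the precise product description of $\IT$-invariant valuations used in the polystability step, since here one generator of $\IT$ acts nontrivially on $E_o$ while the soliton direction is the one acting trivially.
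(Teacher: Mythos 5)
Your scaffolding matches the paper's actual proof up to a point: the criticality of $\eta_0$ together with the shift of the Okounkov coordinate by $3$ does give $S^g(R_\bu;E_o)=A_{X_0}(E_o)=3$, and the problem is indeed transported to $E_o\cong\IP^2$ with the $\sigma_1$-invariant conic $C_2$. But the pivot of your argument --- the expectation that $\delta^g(E_o;W_\bu)>1$, with ``every $S^g$ strictly below $1$'' and the tangent/fixed-point configurations ``dispatched'' to preserve strictness --- is false. The paper's computation in Section \ref{Section: compute S^g of №2.23(b)} shows that for $p\in C_2$ (at the $\sigma_1$-fixed point, which is the relevant equivariant case) the exceptional divisor $E$ of the $(2,1)$-blowup with $\ord_E(y_1)=2$, $\ord_E(z_1)=1$ satisfies $S^g(W_\bu;E)=3=A_{\IP^2}(E)$ \emph{exactly}. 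Since $E$ is a $\sigma_1$-invariant divisor over $\IP^2$, this forces $\delta^g(E_o;W_\bu)\le 1$, equivariantly as well. The equality is structural, not an accident of a bad flag: it records a torus direction of the (weighted K-polystable) pair, so equality along $N_\IR(\IT)$ must occur, and no further case analysis at the surface level can restore a strict inequality.

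Consequently your polystability step collapses: with $\delta^g(E_o;W_\bu)=1$ the intended contradiction ``$1=A/S^g\ge\delta^g(E_o;W_\bu)>1$'' is unavailable, and your splitting $\Val^{\IT,\circ}_{X_0}\cong\Val^{\sigma_1,\circ}_{E_o}\times N_\IR(\sigma_2)$ cannot separate the equality valuations from $N_\IR(\IT)$, because the $\ord_E$-type directions are $\sigma_1$-invariant over $E_o$ yet correspond to torus directions upstairs. The paper's proof of Theorem \ref{Theorem: K-ps of optimal degeneration of №2.23(b)} resolves this by going one level deeper: it refines $W_\bu$ by $E$, lands on the log curve $(\IP^1,\tfrac{1}{2}p_0)$ (the $(2,1)$-blowup has an $A_1$-point $p_0\in E$ with $\Diff_E(0)=\tfrac{1}{2}p_0$), computes $S^g(W^E_\bu;p)\approx 0.2536$, $S^g(W^E_\bu;p_1)\approx 0.5072$, $S^g(W^E_\bu;p_2)\approx 0.9928$, hence $\delta^g(\IP^1,\tfrac{1}{2}p_0;W^E_\bu)>1$, and uses the three-term estimate $\delta^g_{\IG_m^2}(X_0;R_\bu)\ge\min\{1,\,1,\,\delta^g(\IP^1,\tfrac{1}{2}p_0;W^E_\bu)\}$. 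Polystability then follows from the splitting over the \emph{curve}, $\Val^{\IT,\circ}_{X_0}\cong\Val^\circ_E\times N_\IR(\IG_m^2)$, which is legitimate precisely because $E$ is pointwise fixed by the full two-torus (on $E$ the weights of $y_1$ and $z_1^2$ coincide); any putative equality valuation outside $N_\IR(\IT)$ then comes from $\Val^\circ_E$ and contradicts $\delta^g(\IP^1,\tfrac{1}{2}p_0;W^E_\bu)>1$. Your second step and your valuation-splitting would have to be replaced by exactly this deeper refinement for the proof to go through.
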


\begin{proof}
For $p\in \IP^2 \setminus C_2$, one may take refinement of $W_\bu$ by a line through $p$ and shows that $\delta_p(\IP^2;W_\bu)>1$. The key question is when $p\in C_2$, one may show that $\delta_p(\IP^2;W_\bu)=1$ (which will be shown latter). We need to take a further step refinement. 
Let $L\seq \IP^2$ be the line tangent to $C_2$ at $p$. Without loss of generality, just assume that $p=\{y_1=z_1=0\}$. Recall that $C_2 = \{z_1^2-y_1=0\}$, hence $L=\{y_1=0\}$. 
Let $E$ be the exceptional line of the $(2,1)$-blowup of $\IP^2$ at $p$ such that $\ord_E(y_1)=2, \ord_E(z_1)=1$. 

For $a>0,b\ge 0$, we define the linear series $V_{a,b}=H^0(\IP^2,\CO(a))+bC_2$. Then $V_{a,b}-tE$ admits Zariski decomposition
\begin{eqnarray*}
V_{a,b}-tE = 
\left\{ \begin{array}{ll}
H^0(\tilde{\IP}^2, \CO(a))+b\tC+(2b-t)E
& 0\le t \le 2b, \\
H^0(\tilde{\IP}^2, \CO(a)-(t-2b)E) + b\tC_2
& 2b\le t\le a+2b, \\
H^0(\tilde{\IP}^2, \CO(a)-(t-2b)E-(t-a-2b)\tL) + &\\
b\tC_2+(t-a-2b)\tL
& a+2b\le t\le 2a+2b.  
\end{array} \right. 
\end{eqnarray*}
Hence the volumes follows
\begin{eqnarray*}
\vol(V_{a,b}-tE) = 
\left\{ \begin{array}{ll}
a^2
& 0\le t \le 2b, \\
a^2-\frac{1}{2}(t-2b)^2
& 2b\le t\le a+2b, \\
a^2-\frac{1}{2}(t-2b)^2 + (t-a-2b)^2
& a+2b\le t\le 2a+2b.  
\end{array} \right. 
\end{eqnarray*}
Note that 
\begin{eqnarray*}
W_{(1,x)} = 
\left\{ \begin{array}{ll}
V_{3+x,0}
& -3\le x < -2,\\
V_{\frac{1}{3}(5+x), \frac{1}{3}(2+x)}
& -2\le x\le 1, \\
V_{(3-x), x}
& 1\le x\le 3. 
\end{array} \right. 
\end{eqnarray*} 
So we have 
\begin{eqnarray*} 
S^g(W_\bu; E) 
&=& \frac{1}{2\Bv^g} 
\Big(
\int_{-3}^{-2} g(x) 
\int_0^{6+2x} \vol(W_{(1,x)}-tE) 
\dif t 
\dif x  \\
&& \quad +  
\int_{-2}^1 g(x) 
\int_0^{\frac{2}{3}(7+2x)} \vol(W_{(1,x)}-tE) 
\dif t 
\dif x  \\
&& \quad +  
\int_{1}^3 g(x) 
\int_0^{6-2x} \vol(W_{(1,x)}-tE) 
\dif t 
\dif x  
\Big)\\
&=& 3 \,\,\,=\,\,\, A_{\IP^2}(E). 
\end{eqnarray*}
Hence $A_{\IP^2}(E)/S^g(W_\bu; E)=1$. 

Now we take the second step refinement. Recall that the $(2,1)$-blowup $\tilde{\IP^2}$ of $\IP^2$ has one ordinary double point $p_0\in E$. Then the different $\Diff_E(0)=\frac{1}{2}p_0$. Hence the linear series will be refined to $(\IP^1,\frac{1}{2}p_0)$. Let $W^E_\bu, V^E_{a,b,t}$ be the refinement of $W_\bu, V_{a,b}$ by $E$ respectively. Then 
\begin{eqnarray*}
V^E_{a,b,t} = 
\left\{ \begin{array}{ll}
H^0(\IP^1, \CO(\frac{1}{2}(t-2b))) +bp_2
& 2b\le t\le a+2b, \\
H^0(\IP^1, \CO(\frac{1}{2}(2a+2b-t)))+ &\\
bp_2+(t-a-2b)p_1
& a+2b\le t\le 2a+2b.  
\end{array} \right. 
\end{eqnarray*}
\begin{eqnarray*}
W^E_{(1,x,t)} = 
\left\{ \begin{array}{ll}
V^E_{3+x,0,t}
& -3\le x < -2,\\
V^E_{\frac{1}{3}(5+x), \frac{1}{3}(2+x),t}
& -2\le x\le 1, \\
V^E_{(3-x), x,t}
& 1\le x\le 3. 
\end{array} \right. 
\end{eqnarray*} 
A direct computation shows that, for $p\ne p_1,p_2$, 
\begin{eqnarray*}
S^g(W^E_\bu;p) 
&=& \frac{1}{12\Bv^g}
\Big( \int_{-3}^{-2} g(x) (3+x)^3 \dif x +
      \int_{-2}^{1} g(x) (\frac{1}{3}(5+x))^3 \dif x \\
&&
      + \int_{1}^{3} g(x) (3-x)^3 \dif x 
\Big) 
\,\,\,\approx\,\,\, 0.253588,
\\
S^g(W^E_\bu;p_1) 
&=& 2S^g(W^E_\bu;p) 
\,\,\,\approx\,\,\, 0.507176,
\\ 
S^g(W^E_\bu;p_2) 
&=& S^g(W^E_\bu;p) + 
\frac{1}{2\Bv^g}
\Big(\int_{-2}^{1} g(x) (\frac{1}{3}(5+x))^2 \cdot (\frac{1}{3}(2+x)) \dif x \\&&
+\int_{1}^{3} g(x) (3-x)^2 \cdot x \dif x 
\Big)
\,\,\,\approx\,\,\, 0.992824.
\end{eqnarray*} 
Hence
\begin{eqnarray*}
\delta^g(\IP^1,\frac{1}{2}p_0;W_\bu^E)
=\min\Big\{\frac{1}{2 S^g(W^E_\bu;p_0)}, 
\frac{1}{S^g(W^E_\bu;p_1)}, 
\frac{1}{S^g(W^E_\bu;p_2)}\Big\}
\approx \frac{1}{0.992824} > 1. 
\end{eqnarray*}

By the weighted Abban-Zhuang estimate, we have 
$$\delta^g_{\IG_m^2}(X_0;R_\bu) 
\ge \min\{1,1,\delta^g(\IP^1,\frac{1}{2}p_0;W_\bu^E)\}. $$
Since every point of $E$ is invariant under the $\IG_m^2$-action on $X_0$, we see that $\Val^{\IG_m^2,\circ}_{X_0} \cong \Val^\circ_{E}\times N_\IR(\IG_m^2)$. 
Now following the same argument of Theorem \ref{Theorem: K-ps of optimal degeneration of №2.23(a)}, we conclude that $(X_0,\eta_0)$ is weighted K-polystable. 
\end{proof}

\section{The delta invariants of Fano threefolds in family №2.23}

In this section, we find the delta minimizer of Fano threefolds $X$ in family №2.23. Recall that $X=\Bl_C Q$ is the blowup of the quadric threefold $Q$ along a smooth elliptic curve $C=H\cap Q'$, where $H\in |\CO_Q(1)|, Q'\in |\CO_Q(2)|$. Let $\tH$ be the strict transform of $H$ to $X$. We will show that $\delta(X)$ is minimized by the divisorial valuation $\ord_{\tH}$. 

We first compute the $S$-invariant of $\ord_{\tH}$ using intersection numbers on $X$. The anti-canonical divisor of $X$ is given by 
$-K_X = 3\pi^*H -E_C, $
where $E_C$ is the exceptional divisor of $\pi:X\to Q$. 
Then the Zariski decomposition of $-K_X - s\tH=P(s)+N(s)$ is as follows, 
    \begin{eqnarray*}
        P(s) = 
        \left\{ \begin{array}{ll}
        (3-s)\cdot \pi^*H + (s-1)\cdot E_C
        & 0\le s < 1,\\
        (3-s)\cdot \pi^*H 
        & 1\le s\le 3,
        \end{array} \right. 
     \quad \text{and}\quad 
        N(s) = 
        \left\{ \begin{array}{ll}
       0
        & 0\le s < 1,\\
        (s-1)\cdot E_C
        & 1\le s\le 3. 
        \end{array} \right. 
    \end{eqnarray*}
    Therefore, the volume is given by
    \begin{eqnarray*}
        \vol(-K_X - s\tH) = (P(s))^3 = 
        \left\{ \begin{array}{ll}
            -2(s^3+3s^2+3s-15) 
        & 0\le s < 1,\\
        2(3-s)^3
        & 1\le s\le 3. 
        \end{array} \right. 
    \end{eqnarray*}
    Hence, we get
    $$
    S_X(\tH) = \frac{1}{(-K_X)^3}\int_0^3 \text{vol}(-K_X-s\tH)\dif s = \frac{13}{12}.
    $$

\subsection{Local delta invariants of $X$ in №2.23(a)}
\begin{prop} \label{prop: local delta №2.23a}
    Let $X$ be a Fano threefold in №2.23(a). For any $p\in\tH$, the local delta invariant $\delta_p(X) = \frac{12}{13}$ is minimized by the divisorial valuation $\ord_{\tH}$.
\end{prop}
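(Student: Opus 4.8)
The plan is to prove the two bounds $\delta_p(X)\le \frac{12}{13}$ and $\delta_p(X)\ge\frac{12}{13}$ separately, the first being immediate. Since $\ord_{\tH}$ is a divisorial valuation whose center $\tH$ contains $p$, by definition $\delta_p(X)\le A_X(\tH)/S_X(\tH)$. As $A_X(\tH)=1$ (because $\tH$ is a prime divisor on $X$ itself) and $S_X(\tH)=\frac{13}{12}$ was computed at the start of this section, this gives $\delta_p(X)\le\frac{12}{13}$; this inequality will also exhibit $\ord_{\tH}$ as a minimizer once the matching lower bound is established.

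For the lower bound I would invoke the Abban--Zhuang estimate of Theorem \ref{Theorem: weighted AZ} with trivial torus and weight $g\equiv 1$ (so that $\delta^g=\delta$; equivalently, the original estimate of \cite{AZ22}), refining the anticanonical series $R_\bu=R(X,-K_X)$ by the plt-type divisor $\tH\cong\IP^1\times\IP^1$ and taking $Z=\{p\}$. This yields
\begin{eqnarray*}
\delta_p(X)\ge\min\Big\{\frac{A_X(\tH)}{S_X(\tH)},\ \delta_p(\IP^1\times\IP^1;W_\bu)\Big\}
=\min\Big\{\tfrac{12}{13},\ \delta_p(\IP^1\times\IP^1;W_\bu)\Big\},
\end{eqnarray*}
where $W_\bu$ is the refinement of $R_\bu$ by $\tH$. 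From the Zariski decomposition of $-K_X-s\tH$ recorded above one reads off $W_{(1,s)}=H^0(\IP^1\times\IP^1,\CO(s+1,s+1))$ for $0\le s\le 1$ and $W_{(1,s)}=H^0(\IP^1\times\IP^1,\CO(3-s,3-s))+(s-1)\cdot C$ for $1\le s\le 3$, where $C=E_C|_{\tH}\in|\CO(2,2)|$ is the biconic curve, which is \emph{smooth} since $X$ is a smooth Fano threefold. It therefore suffices to prove $\delta_p(\IP^1\times\IP^1;W_\bu)\ge\frac{12}{13}$.

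The value $\delta_p(\IP^1\times\IP^1;W_\bu)$ is then computed by a second round of refinement, running exactly through Cases A, B, C of Section \ref{Section: compute S^g of №2.23(a)} but now with the ordinary DH measure ($g\equiv 1$) in place of the soliton weight. Because $C$ is smooth, every $p\in C$ has multiplicity one, so only three situations occur: $p\notin C$ (refine by a ruling $l\ni p$, as in Case A); $p\in C$ with no ruling tangent to $C$ at $p$ (refine by the exceptional divisor $E$ of the ordinary blowup, Case B with $k=1$); and $p\in C$ with a tangent ruling (refine by the $(2,1)$-blowup $E$, Case C with $k=2$). Writing $q=S(W_\bu;l)=\frac{31}{40}$ for the unweighted ruling value, the same volume identities as before give $S(W_\bu;E)=1+q$ in Case B and $S(W_\bu;E)=2+q$ in Case C, so the divisorial ratios are $A_{\IP^1\times\IP^1}(E)/S(W_\bu;E)=\frac{2}{1+q}=\frac{80}{71}$ and $\frac{3}{2+q}=\frac{40}{37}$ respectively, both exceeding $1>\frac{12}{13}$. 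The subsequent one-dimensional refinements by points of the exceptional $\IP^1$ (including the log pair $(\IP^1,\tfrac12 p_0)$ arising in Case C) only decrease $S$, hence keep every ratio above $\frac{12}{13}$. Consequently $\delta_p(\IP^1\times\IP^1;W_\bu)>\frac{12}{13}$, and combining with the displayed inequality and the upper bound gives $\delta_p(X)=\frac{12}{13}$, attained by $\ord_{\tH}$.

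The main obstacle is the last step: one must re-verify, uniformly over all $p\in\tH$, that the iterated refinement produces only ratios strictly above $\frac{12}{13}$, the tightest being the tangent-ruling case with value $\frac{40}{37}$. All geometric inputs are already at hand — the Zariski decomposition of $-K_X-s\tH$, the identification $\tH\cong\IP^1\times\IP^1$, and above all the smoothness of $C$, which excludes the double-, triple- and quadruple-point configurations (Cases Ib, Ic, IIb, IIc, IId) that destabilize the degeneration $X_0$ — so this step amounts to transcribing the earlier $S^g$-computations with $g\equiv 1$, and the comfortable numerical margin makes the conclusion robust.
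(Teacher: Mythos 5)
Your architecture is exactly the paper's (Abban--Zhuang with the refinement $W_\bu$ of $R(X,-K_X)$ by $\tH$, then Cases A/B/C with $g\equiv 1$), and your Case A value $S(W_\bu;l)=\frac{31}{40}$ is correct. But there is a genuine gap in the middle and at the end. First, your transfer of the Section \ref{Section: compute S^g of №2.23(a)} identities is wrong: the formulas $S^g(W_\bu;E)=k+(2-k)q$ and $k+(3-k)q$ were derived for the degeneration $X_0$, where the refinement lives on $s\in[-1,2]$ with fixed part $s\cdot C$, whereas here the refinement of $R(X,-K_X)$ by $\tH$ lives on $s\in[0,3]$ with fixed part $(s-1)\cdot C$ on $[1,3]$. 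The volume identities do \emph{not} carry over: in Case B the correct value is $S(W_\bu;E)=\frac{109}{60}$ (as in the paper's proof), not $1+q=\frac{71}{40}$, so the ratio is $\frac{120}{109}$, not $\frac{80}{71}$; similarly your Case C value $2+q=\frac{111}{40}$ is not the actual $S(W_\bu;E)$ for the $(2,1)$-blowup in this setting. These divisorial ratios do remain above $1$, so this error alone is repairable by redoing the integrals.

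The more serious problem is the final step, which you flag as ``the main obstacle'' and then dispose of with an invalid monotonicity claim. It is false that the point refinements on $E$ ``only decrease $S$'' and hence keep every ratio above $\frac{12}{13}$ automatically: in the tangent-ruling case the tangency forces $p_1=\tilde{l}_1\cap E$ to coincide with $\tilde{C}\cap E$, and the computation gives $S(W^E_\bu;p_1)=\frac{25}{24}>1$, so the ratio at that point is $\frac{24}{25}<1$. This is precisely the binding constraint in the paper's proof --- the tightest value overall is $\frac{24}{25}$, coming from a point on $E$, not your claimed $\frac{40}{37}$ from a divisorial ratio --- and it happens to satisfy $\frac{24}{25}>\frac{12}{13}$ only by explicit computation. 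Your argument, as written, never certifies this: it asserts all ratios exceed $1$ (false at $p_1$) and offers no mechanism to bound $S(W^E_\bu;\cdot)$ at the special points $p_0,p_1,p_2,p_3$, where the values $\frac{31}{60},\frac{25}{24},\frac{31}{40},\frac{47}{60}$ must each be checked against $\frac{13}{12}$. So the statement is true and your strategy is the right one, but the proof is incomplete exactly where the estimate is tight, and several of the intermediate constants you quote are incorrect.
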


\begin{proof}
    Let $W^{\tH}_\bu$ be the refinement of $R_\bu=R(X,-K_X)$ given in Section \ref{Section: compute S^g of №2.23(a)}. It suffices to show $\delta_p(W^{\tH}_{\bu};\tH) \geq \frac{12}{13}$ for any $p \in \tH$. The following computation is almost the same with those in Section \ref{Section: compute S^g of №2.23(a)} (just replacing the weight function $g(x) = e^x$ by $g=1$). We only list the results. 

    \textbf{Case (A).} Suppose that $p \notin C$. Then let $l=l_1$ be one of the rulings of $\IP^1\times \IP^1$. We have $S(W^{\tH}_{\bu};l) =\frac{31}{40}$. Next, let $W^l_\bu$ be the refinement of $W_{\bu}^{\tH}$ by $l$. Then $S(W^l_{\bu};q)=\frac{31}{40}$ for any $q\in l$. Therefore, by the Abban-Zhuang estimate, we conclude that $\delta_p(W^{\tH}_{\bu}) \geq \min\{\frac{1}{S(W^{\tH}_{\bu};l)},\frac{1}{S(W_{\bu}^{l};q)}\}=\frac{40}{31}> \frac{12}{13}.$

\textbf{Case (B).} Suppose that $p \in C$ and the two rulings $l_1, l_2$ of $\IP^1\times \IP^1$ are not tangent to $C$ at $p$. Then let $\eta:\Tilde{Y}:=\Bl_p(\IP^1 \times \IP^1) \rightarrow \IP^1 \times \IP^1$ be the ordinary blowup of $p$ and we denote by $E$ the exceptional line. We have $S(W^{\tH}_{\bu};E)= \frac{109}{60}$. Next, let $W^E_\bu$ be the refinement of $W^{\tH}_\bu$ by $E$. We denote by $p_1 = \Tilde{l_1}\cap E, p_2 = \Tilde{l_2}\cap E, p_3 = \Tilde{C}\cap E$, which are pairwise distinct by assumption. We have
$$
S(W^{E}_{\bu}; p_0) = \frac{31}{60}, \,\,
S(W^{E}_{\bu}; p_1)=S(W^{E}_{\bu}; p_2)=\frac{31}{40},\,\, 
S(W^{E}_{\bu}; p_3)=\frac{47}{60} \,\,
$$ 
for any $p_0\in E$ different from $p_1,p_2$ and $p_3$. Hence, 
$$\delta(E;W^E_{\bu}) = \min\{\frac{1}{S(W^{E}_{\bu}; p )}, \frac{1}{S(W^{E}_{\bu}; p_1 )},\frac{1}{S(W^{E}_{\bu}; p_2 )},\frac{1}{S(W^{E}_{\bu}; p_3 )}\}= \frac{60}{47}>\frac{12}{13}.$$
We conclude that $\delta_p(W^{\tH}_{\bu};\tH) \geq \min\{\frac{A_{\tH}(E)}{S(W_{\bu}^{\tH};E)}, \delta_p(E;W^E_{\bu})\} = \frac{120}{109}> \frac{12}{13}.$

\textbf{Case (C).}  Suppose that $p \in C$, and one of the rulings $l_1, l_2$ of $\IP^1 \times \IP^1$ is tangent to $C$ at $p$. Using the same notions as {\bf Case (B)}, assume that $l_1$ is tangent to $C$ at $p$. So $p_1 = p_3$. Then 
$$S(W^E_{\bu};p_0)= \frac{31}{60}, \,\,
S(W^E_{\bu};p_1)=\frac{25}{24}, \,\,
S(W^E_{\bu};p_2) = \frac{31}{40}$$
for any $p_0\in E$ different from $p_1$ and $p_2$.
Hence,
$$\delta(E;W^E_{\bu}) = \min\{\frac{1}{S(W^{E}_{\bu}; p_0)}, \frac{1}{S(W^{E}_{\bu}; p_1 )},\frac{1}{S(W^{E}_{\bu}; p_2 )}\}= \frac{24}{25}>\frac{12}{13}.$$
We conclude that $\delta_p(W^{\tH}_{\bu};\tH) \geq \min\{\frac{A_{\tH}(E)}{S(W_{\bu}^{\tH};E)}, \delta_p(E;W^E_{\bu})\} = \frac{24}{25}> \frac{12}{13}.$

Combining {\bf Case (A), (B)} and {\bf (C)}, we see that
$$
\frac{12}{13} = \frac{1}{S_X(\tH)}\geq \delta_p(X) \geq \min\{ \frac{12}{13}, \frac{40}{31}, \frac{120}{109}, \frac{24}{25}\} = \frac{12}{13}. 
$$
Hence $\delta_p(X) = \frac{12}{13}$ is minimized by the divisorial valuation $\ord_{\tH}$.
\end{proof}

\subsection{Local delta invariants of $X$ in №2.23(b)}
In the case of №2.23(b), We need to work over $\tH\cong \IP(1,1,2)$, which is a cone over conic, instead of $\tH \cong \IP^1\times \IP^1$. We always denote by $l$ a ruling of the cone $\tH$. We see that $2l \in \CO_{\tH}(1)$. 

\begin{prop} 
\label{prop: local delta №2.23b}
    Let $X$ be a Fano threefold in №2.23(b). For any $p\in\tH$, the local delta invariant $\delta_p(X) = \frac{12}{13}$ is minimized by the divisorial valuation $\ord_{\tH}$.
\end{prop}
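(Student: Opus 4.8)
The plan is to follow the proof of Proposition~\ref{prop: local delta №2.23a} verbatim in structure, but to perform every refinement over the cone $\tH\cong\IP(1,1,2)$ with trivial weight $g=1$ instead of over $\IP^1\times\IP^1$. First I would recall that the value $S_X(\tH)=\frac{13}{12}$ computed at the start of this section is insensitive to whether $H$ is smooth, so $\ord_{\tH}$ already realizes the ratio $\frac{A_X(\tH)}{S_X(\tH)}=\frac{12}{13}$ (note $A_X(\tH)=1$, since $\tH$ is a prime divisor on the smooth threefold $X$), giving $\delta_p(X)\le\frac{12}{13}$ for every $p\in\tH$. For the reverse inequality I would apply the Abban-Zhuang estimate with $F=\tH$, obtaining $\delta_p(X)\ge\min\{\frac{12}{13},\,\delta_p(\tH;W^{\tH}_\bu)\}$, where $W^{\tH}_\bu$ is the refinement of $R_\bu=R(X,-K_X)$ by $\tH$. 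Hence the proposition reduces to the single estimate $\delta_p(\tH;W^{\tH}_\bu)\ge\frac{12}{13}$ for all $p\in\IP(1,1,2)$.

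Next I would record $W^{\tH}_\bu$ explicitly: its DH-measure is the same as in the $\ord_{\tH}$-degeneration of Section~\ref{sec:№2.23b Hu degeneration}, but its graded pieces are now supported on $\IP(1,1,2)$, namely $W^{\tH}_{(1,s)}=H^0(\IP(1,1,2),\CO(2+s))$ for $-1\le s<0$ and $H^0(\IP(1,1,2),\CO(2-s))+s\cdot C$ for $0\le s\le2$, where $C\in|\CO(4)|$ is the smooth elliptic curve and a ruling of the cone lies in $|\CO_{\tH}(\tfrac12)|$ and meets $C$ in two points. With this in hand the computation of the successive $S$-invariants is formally parallel to Section~\ref{Section: compute S^g of №2.23(a)}, and I would organize the argument according to the position of $p$ on the cone, refining in each case by a suitable ruling or by the exceptional curve of a (possibly weighted) blowup and then reading off the one-dimensional $S$-invariants.

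I expect four cases. For $p$ neither the vertex nor on $C$, refine by the ruling $l$ through $p$ and then by $p$ on $l$ (Case (A)); for $p\in C$ with $l$ not tangent to $C$, use the ordinary blowup of $p$ (Case (B)); for $p\in C$ with $l$ tangent to $C$, use the $(2,1)$-blowup, whose exceptional curve carries the $\tfrac12$-different of the resulting $A_1$-point exactly as in {\bf Case (C)} of Section~\ref{Section: compute S^g of №2.23(a)} --- here, since $p\in C$ avoids the cone vertex, $p$ is a smooth point of $\IP(1,1,2)$ and this case transfers directly. The genuinely new case is the cone vertex $p=o_{\tH}$, which $C$ misses because $q(0,0,0,1)\ne0$; there I would resolve the ambient $A_1$-singularity by a weighted blowup, refine $W^{\tH}_\bu$ by its exceptional $\IP^1$, and carry the $\tfrac12$-different that this exceptional curve inherits from the quotient singularity. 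In each case one checks that the minimum of the reciprocal $S$-invariants exceeds $\frac{12}{13}$, yielding $\delta_p(\tH;W^{\tH}_\bu)\ge\frac{12}{13}$ and hence $\delta_p(X)=\frac{12}{13}$, minimized by $\ord_{\tH}$.

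The main obstacle is precisely the vertex: unlike on $\IP^1\times\IP^1$, every ruling of the cone passes through the single singular point $o_{\tH}$, so the $A_1$-singularity intrudes on every refinement near the apex and must be tracked carefully through the Zariski decompositions and through the differents produced by each weighted blowup. Getting these boundary contributions right --- rather than the bookkeeping of the volume integrals, which go through as for family №2.23(a) once $g$ is set to $1$ --- is where the real work lies.
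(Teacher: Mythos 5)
Your skeleton (upper bound from $S_X(\tH)=\frac{13}{12}$, Abban--Zhuang reduction to $\delta_p(\tH;W^{\tH}_\bu)\ge\frac{12}{13}$, the correct shifted description of $W^{\tH}_\bu$, and the case division by position of $p$ relative to $C$ and tangency of rulings) matches the paper's proof. But your treatment of the vertex --- the case you yourself single out as where the real work lies --- contains a genuine error. Blowing up the vertex of $\IP(1,1,2)$ gives the \emph{minimal resolution}, i.e.\ the smooth Hirzebruch surface $\IF_2$ with the exceptional curve $E$ a smooth $(-2)$-curve; hence $\Diff_E(0)=0$, and there is no $\tfrac12$-different to ``carry'' on $E$. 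The spurious different is not harmless: since $C$ misses the vertex and $-K_{X}|_{\tH}$-classes pulled back to $\IF_2$ stay nef throughout the relevant range, one computes $S(W^{\tH}_\bu;E)=\frac{31}{30}$ and $S(W^E_\bu;q)=\frac{31}{30}$ for \emph{every} $q\in E$ (the degree of the positive part restricted to $E$ is $2t$, and no negative parts or fixed curves meet $E$). With your claimed different the second-step bound becomes $\frac{1-1/2}{S(W^E_\bu;q)}=\frac{15}{31}<\frac{12}{13}$, so your Abban--Zhuang estimate at the vertex fails outright. With the correct different ($=0$) your route does work, yielding $\min\bigl\{\frac{A_{\tH}(E)}{S(W^{\tH}_\bu;E)},\frac{1}{S(W^E_\bu;q)}\bigr\}=\frac{30}{31}>\frac{12}{13}$, so the gap is fixable --- but as written the key case is wrong.

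For comparison, the paper never blows up the vertex at all: since the vertex is off $C$, it is absorbed into Case (A) by refining with a ruling $l\in|\CO_{\tH}(\frac12)|$ through $p$; the $\tfrac12$-different then lives on $l$, namely $\Diff_l(0)=\frac12 p$ at the vertex, giving $\min\bigl\{\frac{1}{S(W^{\tH}_\bu;l)},\frac{1-(1/2)}{S(W^l_\bu;p)}\bigr\}=\min\bigl\{\frac{30}{31},\frac{1/2}{31/60}\bigr\}=\frac{30}{31}$. (The $\tfrac12$-differents in this problem occur on a ruling at the vertex and at the $A_1$-point of the $(2,1)$-blowup of a \emph{smooth} point --- exactly the two places the paper uses them.) A secondary caveat: your claim that the tangent case ``transfers directly'' from №2.23(a) is too glib --- the paper redoes the Zariski decomposition on $\IP(1,1,2)$, where the negative part carries coefficient $\frac{2}{3}$ on the strict transform of the ruling rather than $\frac12$, landing at $S(W^{\tH}_\bu;E)=\frac{187}{60}$ and the bound $\frac{180}{187}$ --- but that is bookkeeping, not a conceptual obstruction.
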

\begin{proof}
    Let $W^{\tH}_\bu$ be the refinement of $R_\bu=R(X,-K_X)$ by $\tH$, which is given by
\begin{eqnarray*}
    W_{(1,s)}^{\tH} = 
    \left\{ \begin{array}{ll}
    H^0\Big(\IP(1,1,2), \CO(1+s)\Big) 
    & 0\le s < 1,\\
    H^0\Big(\IP(1,1,2), \CO(3-s)\Big) +(s-1)\cdot C
    & 1\le s\le 3,
    \end{array} \right. 
\end{eqnarray*} 
where $C = \tH \cap E_C$. 

\textbf{Case (A).} Suppose that $p \notin C$. Let $l$ be a ruling of $\tH$ passing through $p$. The filtration induced by $l$ is
\begin{eqnarray*}
    \CF_{l}^{(t)} W_{(1,s)} = 
    \left\{ \begin{array}{ll}
    H^0\Big(\IP(1,1,2), \CO(1+s-\frac{t}{2})\Big)+t\cdot l
    & 0\le s < 1, 0\leq t \leq 2+2s,\\
    H^0\Big(\IP(1,1,2), \CO(3-s-\frac{t}{2})\Big)+ t\cdot l&\\
    + (s-1)\cdot C
    & 1\le s\le 3, 0\leq t \leq 6-2s. 
    \end{array} \right. 
\end{eqnarray*}
Hence
$$
S(W^{\tH}_{\bu};l) = \frac{1}{2\cdot 5}\cdot \left(\int_0^1 \int_0^{2+2s} 2\cdot \left(1+s-\frac{t}{2}\right)^2 \dif t + \int_1^3 \int_0^{6-2s} 2\cdot \left(3-s-\frac{t}{2}\right)^2 \dif t\right) = \frac{31}{30}< \frac{13}{12}.
$$

Next, let $W^l_\bu$ be the refinement of $W^{\tH}_{\bu}$ by $l$, which is given by
\begin{eqnarray*}
    W_{(1,s,t)}^{l} = 
    \left\{ \begin{array}{ll}
    H^0\Big(\IP^1, \CO(1+s-\frac{t}{2})\Big) 
    & 0\le s < 1, 0\leq t \leq 2+2s, \\
    H^0\Big(\IP^1, \CO(3-s-\frac{t}{2})\Big) +(s-1)\cdot (p_1+p_2)
    & 1\le s\le 3, 0\leq t \leq 6-2s,
    \end{array} \right. 
\end{eqnarray*} 
where $\{p_1,p_2\}=C\cap l$. Note that $p\in l$ is different from $p_1,p_2$, we have 
\begin{eqnarray*}
    \CF_{p}^{(x)} W_{(1,s,t)}^{l'} = 
    \left\{ \begin{array}{ll}
    H^0\Big(\IP^1, \CO(1+s-\frac{t}{2}-x)\Big) +x\cdot p
    & 0\le s < 1, 0\leq t \leq 2+2s,\\
    H^0\Big(\IP^1, \CO(3-s-\frac{t}{2}-x)\Big) +x\cdot p&\\
    + (s-1)\cdot (p_1+p_2)
    & 1\le s\le 3, 0\leq t \leq 6-2s. 
    \end{array} \right. 
\end{eqnarray*}
Hence $S(W^{l}_{\bu}; p) = \frac{31}{60}$.
If $p \in l$ is the vertex of the cone $\tH$, then $(K_{\tH}+l)|_l = K_l+\D_l$ where $\D_l=\Diff_l(0)=\frac{1}{2}\cdot p$. We conclude by the Abban-Zhuang estimate that
$$
\delta_p(W_{\bu}^{\tH}) \geq \min \{\frac{1}{S(W_{\bu}^{\tH};l)}, \frac{1-(1/2)}{S(W_{\bu}^{l};p)}\} = \frac{30}{31}> \frac{12}{13}.
$$

\begin{rmk}\rm 
The case where $p$ serves as the vertex of $\tH=\IP(1,1,2)$ is the crucial step in the proof. Other cases are standard since they are similar to those of №2.23(a). 
\end{rmk}

\textbf{Case (B). } Suppose that $p \in C$ and the ruling  $l$ passing through $p$ intersects $C$ transversally at $p$. Let $Y'$ be the ordinary blowup of $\IP(1,1,2)$ at $p$ and $E\cong \IP^1$ be the exceptional line. The negative part in the Zariski decomposition of $\CF_{E}^{(t)}W_{(1,s)}^{\tH}$ is given by
\begin{eqnarray*}
    N(\CF_{E}^{(t)}W_{(1,s)}^{\tH}) = 
    \left\{ \begin{array}{ll}
    t\cdot E 
    & 0\leq s < 1, 0\leq t \leq 1+s,\\
    t\cdot E + 2\cdot(t-1-s)\cdot \Tilde{l}
    & 0\leq s < 1, 1+s\leq t \leq 2+2s,\\
    (s-1)\cdot E + (s-1)\cdot \Tilde{C} & 1\leq s \leq 3, 0\leq t \leq s-1,\\
    t\cdot E+(s-1)\cdot \Tilde{C} & 1\leq s \leq 3, s-1\leq t \leq 2,\\
    t\cdot E+(s-1)\cdot \Tilde{C}+ 2(t-2)\cdot \Tilde{l} 
    & 1\le s\le 3, 2 \leq t \leq 5-s.\\
    \end{array} \right. 
\end{eqnarray*}
Then the volume is as follows
\begin{eqnarray*}
    \vol(\CF_{E}^{(t)}W_{(1,s)}^{\tH}) = 
    \left\{ \begin{array}{ll}
    2\cdot(1+s)^2 -t^2 
    & 0\leq s < 1, 0\leq t \leq 1+s,\\
    (2+2s-t)^2
    & 0\leq s < 1, 1+s\leq t \leq 2+2s,\\
    2\cdot (3-s)^2 & 1\leq s \leq 3, 0\leq t \leq s-1,\\
    2\cdot(3-s)^2-(s-1-t)^2 & 1\leq s \leq 3, s-1\leq t \leq 2,\\
    (5-s-t)^2   
    & 1\le s\le 3, 2 \leq t \leq 5-s.\\
    \end{array} \right. 
\end{eqnarray*}
Hence $S(W_{\bu}^{\tH};E) =\frac{109}{60}$. We have $\frac{A_{\tH}(E)}{S(W_{\bu}^{\tH};E)} = \frac{120}{109}>\frac{12}{13}$.

Next, let $W^E_\bu$ be the refinement of $W^{\tH}_{\bu}$ by $E$, which is given by
\begin{eqnarray*}
    W_{(1,s,t)}^{E} = 
    \left\{ \begin{array}{ll}
    H^0\Big(\IP^1, \CO(t)\Big) 
    & 0\le s < 1, 0\leq t \leq 1+s,\\
    H^0\Big(\IP^1, \CO(2+2s-t)\Big) +2(t-1-s)\cdot q_1 
    & 0\le s < 1, 1+s\leq t \leq 2+2s,\\
    H^0\Big(\IP^1, \CO(t-s+1)\Big) +(s-1)\cdot q_2 
    & 1\le s\le 3, s-1 \leq t \leq 2,\\
    H^0\Big(\IP^1, \CO(5-s-t)\Big) +2(t-2)\cdot q_1&\\
    +(s-1)\cdot q_2
    & 1\le s\le 3, 2 \leq t \leq 5-s, 
    \end{array} \right. 
\end{eqnarray*}
where $q_1 = \Tilde{l}\cap E$ and $q_2 = \Tilde{C}\cap E$. We have $q_1\neq q_2$ by assumption. Then we have 
$$S(W^E_{\bu};q) = \frac{31}{60}, \,\,
S(W^E_{\bu};q_1) = \frac{31}{30}, \,\,
S(W^E_{\bu};q_2) = \frac{47}{60}.$$ 
Hence
\begin{eqnarray*}
\delta(E;W^E_{\bu})
&=&\min\{ \frac{1}{S(W^E_{\bu};q)}, \frac{1}{S(W^E_{\bu};q_1)}, \frac{1}{S(W^E_{\bu};q_2)} \} = \frac{30}{31}, \\
\delta_p(\tH; W_{\bu}^{\tH})
&\geq& 
\min\{\frac{A_{\tH}(E)}{S(W_{\bu}^{\tH};E)}, \delta_p(E;W_{\bu}^E) \} = \frac{30}{31}>\frac{12}{13}.
\end{eqnarray*}

\textbf{Case (C).} Suppose that $p \in C$ and $l$ is tangent to $C$ at $p$.
Choose local coordinate $(X,Y)$ of $p\in \IP(1,1,2)$ such that 
$l = \{X = 0\}$. Let $\tY$ be the $(2,1)$-blowup of $\IP(1,1,2)$ at $p$ such that $\ord_E(X)=2, \ord_E(Y)=1$ where $E$ is the exceptional line. 
We have $\pi^*l = \Tilde{l}+2E, \pi^*C = \Tilde{C}+2E$. 
Let $p_1=\Tilde{l}\cap E, p_2 = \Tilde{C}\cap E$ and $p_0$ be the unique singular point of $\tY$. Note the three points $p_0, p_1, p_2$ are distinct.  Moreover, the negative part in the Zariski decomposition of $\CF_{E}^{(t)}W_{(1,s)}^{\tH}$ is given by
\begin{eqnarray*}
    N(\CF_{E}^{(t)}W_{(1,s)}^{\tH}) = 
    \left\{ \begin{array}{ll}
    t\cdot E 
    & 0\leq s < 1, 0\leq t \leq 1+s,\\
    t\cdot E + \frac{2}{3}\cdot(t-1-s)\cdot \Tilde{l}
    & 0\leq s < 1, 1+s\leq t \leq 4+4s,\\
    (2s-2)\cdot E + (s-1)\cdot \Tilde{C} & 1\leq s \leq 3, 0\leq t \leq 2s-2,\\
    t\cdot E + (s-1)\cdot \Tilde{C} & 1\leq s \leq 3, 2s-2\leq t \leq s+1,\\
    t\cdot E+ (s-1)\cdot \Tilde{C}&\\
    + \frac{2}{3}\cdot(t-s-1)\cdot \Tilde{l} 
    & 1\le s\le 3, s+1 \leq t \leq 10-2s.\\
    \end{array} \right. 
\end{eqnarray*}
The volume is as follows 
\begin{eqnarray*}
    \vol(\CF_{E}^{(t)}W_{(1,s)}^{\tH}) = 
    \left\{ \begin{array}{ll}
    \frac{1}{2}\cdot(2+2s)^2 -\frac{1}{2} t^2 
    & 0\leq s < 1, 0\leq t \leq 1+s,\\
    \frac{1}{2}\cdot (\frac{8}{3} s-\frac{2}{3} t+\frac{8}{3})^2-\frac{1}{2}\cdot (\frac{1}{3}t-\frac{4}{3}s-\frac{4}{3})^2
    & 0\leq s < 1, 1+s\leq t \leq 4+4s,\\
    \frac{1}{2}\cdot (6-2s)^2 & 1\leq s \leq 3, 0\leq t \leq 2s-2,\\
    \frac{1}{2}\cdot (6-2s)^2-\frac{1}{2}\cdot(2s-2-t)^2 & 1\leq s \leq 3, 2s-2\leq t \leq s+1,\\
    \frac{1}{2}\cdot (\frac{20}{3}-\frac{4}{3}s-\frac{2}{3}t)^2-\frac{1}{2}\cdot(-\frac{10}{3}+\frac{2}{3}s+\frac{1}{3}t)^2   
    & 1\le s\le 3, s+1 \leq t \leq 10-2s.\\
    \end{array} \right. 
\end{eqnarray*}
Hence 
$S(W_{\bu}^{\tH}; E) = \frac{187}{60}. $
We get $\frac{A_{\tH}(E)}{S(W_{\bu}^{\tH};E)} = \frac{180}{187} >\frac{12}{13}$. 

Next, let $W^E_\bu$ be the refinement of $W^{\tH}_{\bu}$ by $E$, which is given by
\begin{eqnarray*}
    W_{(1,s,t)}^{E} = 
    \left\{ \begin{array}{ll}
    H^0\Big(\IP^1, \CO(\frac{t}{2})\Big) 
    & 0\le s < 1, 0\leq t \leq 1+s,\\
    H^0\Big(\IP^1, \CO(\frac{2}{3}+\frac{2}{3}s-\frac{1}{6}t)\Big) +\frac{2}{3}\cdot(t-1-s)\cdot p_1  
    & 0\le s < 1, 1+s\leq t \leq 4+4s,\\
    H^0\Big(\IP^1, \CO(\frac{t}{2}-s+1)\Big) +(s-1)\cdot p_2 
    & 1\le s\le 3, 2s-2 \leq t \leq s+1,\\
    H^0\Big(\IP^1, \CO(\frac{5}{3}-\frac{1}{3}s-\frac{1}{6}t)\Big) +(s-1)\cdot p_2&\\
    +\frac{2}{3}\cdot (t-s-1)\cdot p_1
    & 1\le s\le 3, s+1 \leq t \leq 10-2s. 
    \end{array} \right. 
\end{eqnarray*}
Hence 
$$S(W^E_{\bu};p_0) = \frac{31}{120}, \,\,
S(W^E_{\bu};p_1) = \frac{31}{30}, \,\,
S(W^E_{\bu};p_2) = \frac{21}{40}. $$ 
We conclude that 
\begin{eqnarray*}
\delta(E;W^E_{\bu})
&=&\min\{ \frac{1-(1/2)}{S(W^E_{\bu};p_0)}, \frac{1}{S(W^E_{\bu};p_1)}, \frac{1}{S(W^E_{\bu};p_2)} \} = \frac{30}{31}, \\
\delta_p(\tH; W_{\bu}^{\tH})
&\geq&
\min\{\frac{A_{\tH}(E)}{S(W_{\bu}^{\tH};E)}, \delta_p(E;W_{\bu}^E) \} = \frac{180}{187}>\frac{12}{13}.
\end{eqnarray*}

Combining {\bf Cases (A), (B)}, and {\bf (C)}, we have
$$
\frac{12}{13} = \frac{1}{S_X(\tH)}\geq \delta_p(X) \geq \min\{\frac{12}{13}, \frac{30}{31}, \frac{180}{187} \} = \frac{12}{13}. 
$$
Hence $\delta_p(X) = \frac{12}{13}$ is minimized by the divisorial valuation $\ord_{\tH}$.
\end{proof}

\subsection{The delta invariants of $X$ in №2.23}
\begin{thm}
    For any $X$ in №2.23, the delta invariant $\delta(X)= \frac{12}{13}$ is minimized by $\ord_{\tH}$.
\end{thm}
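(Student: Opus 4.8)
The plan is to upgrade the two local Propositions to the global statement through the localization of the stability threshold, $\delta(X)=\inf_{p\in X}\delta_p(X)$, together with the existence of a divisorial minimizer \cite{LXZ22}. Since $\tH$ is a prime divisor on the smooth threefold $X$ we have $A_X(\tH)=1$, so the value $S_X(\tH)=\frac{13}{12}$ computed above immediately yields the upper bound
\[
\delta(X)\;\le\;\frac{A_X(\tH)}{S_X(\tH)}\;=\;\frac{12}{13}.
\]
For the reverse inequality it suffices to prove $\delta_p(X)\ge\frac{12}{13}$ for every $p\in X$: if some prime divisor $E$ achieved $A_X(E)/S_X(E)<\frac{12}{13}$, then any $p\in C_X(E)$ would violate this bound.

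The points $p\in\tH$ are exactly the content of Propositions \ref{prop: local delta №2.23a} and \ref{prop: local delta №2.23b}, which give $\delta_p(X)=\frac{12}{13}$ in both subfamilies; in particular no divisor whose center meets $\tH$ can do better than $\frac{12}{13}$. Hence the only scenario left to exclude is a minimizer whose center lies entirely in the open set $X\setminus\tH$, and the real work is to show $\delta_p(X)\ge\frac{12}{13}$ (in fact strictly) for $p\notin\tH$. I would stratify $X\setminus\tH$ into the locus $\pi^{-1}(Q\setminus H)\cong Q\setminus H$, where $\pi$ is an isomorphism, and the relatively open part $E_C\setminus(E_C\cap\tH)$ of the exceptional divisor over $C$.

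On each stratum I would rerun the Abban--Zhuang estimate (now unweighted, i.e.\ with $g\equiv 1$), refining by a divisor adapted to the point instead of by $\tH$. For $p$ with $\pi(p)\in Q\setminus H$, refine $R_\bu$ by the strict transform $\widetilde{H}'=\pi^*H'$ of a general hyperplane section $H'\ni\pi(p)$: here $C\not\subseteq H'$, so $\widetilde{H}'\cong\IP^1\times\IP^1$ and $E_C\cap\widetilde{H}'$ is the finite set lying over $C\cap H'$, reducing matters to a surface computation entirely analogous to those in Section \ref{Section: compute S^g of №2.23(a)}. For $p\in E_C\setminus\tH$ I would refine by $E_C$ itself and then by curves inside the ruled surface $E_C$. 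In both cases the two terms $A_X(\cdot)/S_X(\cdot)$ and the induced $\delta$ on the refined series are expected to exceed $\frac{12}{13}$ strictly, reflecting that $\tH$ is the extremal degenerating direction, so that the local invariant jumps above $\frac{12}{13}$ away from it.

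The main obstacle is exactly this off-$\tH$ estimate. Concretely one must compute $S_X(\widetilde{H}')=S_X(\pi^*H)$ and $S_X(E_C)$ from the intersection theory of $X=\Bl_C Q$ (using $(\pi^*H)^3=2$, $(\pi^*H)^2\cdot E_C=0$, together with $H\cdot C$ and $\deg N_{C/Q}$), determine the Zariski decompositions of $-K_X-s\widetilde{H}'$ and $-K_X-sE_C$, and carry out the nested refinements on $\IP^1\times\IP^1$ (resp.\ on $E_C$), keeping track of the finitely many marked points coming from $C$. Once these routine but lengthy computations confirm $\delta_p(X)>\frac{12}{13}$ throughout $X\setminus\tH$, combining with the two Propositions gives $\delta(X)=\min_{p}\delta_p(X)=\frac{12}{13}$, minimized by $\ord_{\tH}$, as claimed.
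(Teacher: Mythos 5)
Your reduction to $\delta(X)=\inf_p\delta_p(X)$ and the upper bound via $S_X(\tH)=\frac{13}{12}$ are fine, but the decisive step — the lower bound $\delta_p(X)\ge\frac{12}{13}$ for $p\notin\tH$ — is never established: you defer it to ``routine but lengthy computations'' that are neither carried out nor actually routine in the form you propose. In fact the one concrete reduction you offer is flawed. Since $\deg C=(H\cdot Q')_Q=4$, a general hyperplane section $H'\ni\pi(p)$ meets $C$ transversally in four points, so the strict transform $\widetilde{H}'$ is not $\IP^1\times\IP^1$ but the blowup $\Bl_{C\cap H'}(\IP^1\times\IP^1)$ at those four points, and $E_C\cap\widetilde{H}'$ is a union of four exceptional $(-1)$-curves, not a finite set. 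The refined linear series therefore lives on a four-point blowup of the quadric surface, whose Zariski chambers are genuinely different from the computations of Section \ref{Section: compute S^g of №2.23(a)}, and the stratum $E_C\setminus\tH$ would require yet another family of decompositions. So as written the proposal is a plan whose main estimate is missing, and whose announced route to that estimate would not go through unchanged.

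The paper's proof avoids this entire off-$\tH$ analysis by a different mechanism, which is the idea your argument is missing. One first degenerates $X$ to $X_0=\Bl_C Q_0$ via the $\IG_m$-action underlying $\ord_{\tH}$; by lower semicontinuity of $\delta$ \cite{BL18b} it suffices to prove $\delta(X_0)=\frac{12}{13}$. On $X_0$ the torus action pays off: by \cite{Zhu21} one may compute $\delta(X_0)=\delta_{\IG_m}(X_0)$ using only $\IG_m$-invariant centers, and every such center meets $\{p_0\}\cup\tH$, where $p_0$ is the vertex of the cone $Q_0$. The vertex is disposed of by a single refinement (giving $\delta_{p_0}(X_0)>1$), and for $p\in\tH$ the refinement of $R(X_0,-K_{X_0})$ by $\tH$ coincides with the one already computed in the two local propositions, so those computations are reused verbatim. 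In short, degeneration plus equivariance plus semicontinuity replaces the two-dimensional family of pointwise estimates on $X\setminus\tH$ by one computation at a fixed point; to complete your proof along your own lines you would instead have to actually execute the Zariski decompositions and nested refinements on $\Bl_{C\cap H'}(\IP^1\times\IP^1)$ and on $E_C$ for every stratum, which is substantially more work and is precisely what the theorem's proof was engineered to avoid.
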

\begin{proof}
    For any $X$ in №2.23(a) 
    or №2.23(b), we utilize the one-parameter subgroup $$\lambda(t)\cdot [u,x,y,z,w]=[t^{-1}u,x,y,z,w]$$ 
    as in the beginning of Section \ref{Section: Optimal deg of 2.23(a)}, to produce a special degeneration $X = \text{Bl}_C Q \rightsquigarrow X_0=\Bl_{C} Q_0$. We claim that $\delta(X_0) = \frac{12}{13}$. 
    Then by the lower semicontinuity of the delta invariant \cite{BL18b}, we get $$\frac{12}{13}
    =\frac{A_X(\tH)}{S_X(\tH)}
    \geq\delta(X)
    \geq\delta(X_0)
    =\frac{12}{13}.$$ 
    Hence $\delta(X)=\frac{12}{13}$ is minimized by $\ord_{\tH}$. 

    Now we prove the claim. Note that $X_0$ admits a $\IG_m$-action. The $\IG_m$-invariant subsets of $X_0$ are just the vertex $p_0$ of the cone $Q_0$, and the subvarieties of $\tH$. Hence it suffices to show $\delta_p(X_0)\ge\frac{12}{13}$ for $p=p_0$ and any closed point $p\in \tH$ by \cite{Zhu21}. For $p=p_0$, one may take refinement by the exceptional divisor of the ordinary blowup of the point $p_0$ and shows that $\delta_{p_0}(X_0)>1$. For $p\in \tH$, we first see that $S_{X_0}(\tH)= \frac{13}{12}$ as computed in the beginning of this section. 
    Next, let $W^{\tH}_\bu$ be the refinement of $R_\bu=R(X_0,-K_{X_0})$ by $\tH$. Then $\delta_p(\tH; W^{\tH}_\bu) \ge \frac{12}{13}$ by Proposition \ref{prop: local delta №2.23a} and \ref{prop: local delta №2.23b}. 
    Hence $\delta(X_0)=\delta_{\IG_m}(X_0)=\frac{12}{13}$ is minimized by $\ord_{\tH}$. 
\end{proof}

\bibliographystyle{alpha}
\bibliography{ref}
\end{document}